\numberwithin{equation}{section}
\newtheorem{hyps}{Hypotheses}
\newtheorem{Lemma}{Lemma}[section]
\newtheorem{Th}[Lemma]{Theorem}
\newtheorem{Prop}[Lemma]{Proposition}
\newtheorem{Def}[Lemma]{Definition}
\newtheorem{claim}{Claim}
\theoremstyle{remark}
\newtheorem{Rk}[Lemma]{Remark}
\newcommand{\be}{\begin{equation}}
\newcommand{\ee}{\end{equation}}
\newcommand{\baa}{\begin{array}}
\newcommand{\eaa}{\end{array}}
\newcommand{\ba}{\begin{eqnarray}}
\newcommand{\ea}{\end{eqnarray}}
\def\epsilon{\varepsilon}
\newcommand\redsout{\bgroup\markoverwith{\textcolor{red}{\rule[0.5ex]{2pt}{0.4pt}}}\ULon}
\def\R{{\mathbb R}}
\newcommand{\bee}{\begin{equation*}}
\newcommand{\eee}{\end{equation*}}
\newcommand{\bc}{\begin{cases}}
\newcommand{\ec}{\end{cases}}
\date{}
\title{Spreading and vanishing for a monostable reaction-diffusion equation with forced speed}
\author{Juliette Bouhours\footnote{CMAP, Ecole Polytechnique, Palaiseau, France} , Thomas Giletti\footnote{IECL, Universit\'{e} de Lorraine, Vandoeuvre-l\`{e}s-Nancy, France}}
\begin{document}
\maketitle
\begin{abstract}
Invasion phenomena for heterogeneous reaction-diffusion equations are contemporary and challenging questions in applied mathematics. In this paper we are interested in the question of spreading for a reaction-diffusion equation when the subdomain where the reaction term is positive is shifting/contracting at a given speed $c$. This problem arises in particular in the modelling of the impact of climate change on population dynamics. By placing ourselves in the appropriate moving frame, this leads us to consider a reaction-diffusion-advection equation with a heterogeneous in space reaction term,  in dimension $N\geq1$. We investigate the behaviour of the solution $u$ depending on the value of the advection constant~$c$, which typically stands for the velocity of climate change. We find that, when the initial datum is compactly supported, there exists precisely three ranges for $c$ leading to drastically different situations. In the lower speed range the solution always spreads, while in the upper range it always vanishes. More surprisingly, we find that that both spreading and vanishing may occur in an intermediate speed range. The threshold between those two outcomes is always sharp, both with respect to $c$ and to the initial condition. We also briefly consider the case of an exponentially decreasing initial condition, where we relate the decreasing rate of the initial condition with the range of values of~$c$ such that spreading occurs.\medskip

\noindent{2010 \em{Mathematics Subject Classification}:  35B40, 35C07, 35K15, 35K57,  92D25}\\
\noindent{{\em Keywords:} Reaction-diffusion equations, climate change, travelling waves, long time behaviour, sharp threshold phenomena.}
\end{abstract}

\section{Introduction}

In this paper we consider the following reaction-advection-diffusion problem, motivated by the modelling of climate change:
\be\label{problemmoving}
\partial_t u-\Delta u-c\partial_{x} u=f(x,y,u), \quad t>0, \ (x,y)\in \Omega = \R\times\omega, 
\ee
where $\omega \subseteq \mathbb{R}^{N-1}$ with $N \geq 1$, and $c \geq 0$. Our purpose is the mathematical study of the large time behaviour of solutions of this problem, and in particular to determine whether the solution goes to 0 or not as $ t \to +\infty$.

\subsection{Motivation}
By the simple change of variable $u(t,x,y) = \tilde{u} (t,x+ct,y)$, this problem is equivalent to the following:
\be\label{probleminit}
\partial_t \tilde{u} - \Delta \tilde{u} = f (x-ct,y,\tilde{u}), \quad t>0, \ (x,y)\in \R\times\omega.
\ee
This latter equation is motivated by the study of the effect of climate change on population persistence. In~\cite{MM00,WPetal02}, the different authors point out that global warming induces a shift of the climate envelope of some species toward higher altitude or latitude. Therefore, these species need to keep pace with their moving favourable habitat in order to survive. In such a framework, $u$ stands for the density of an ecological population, which spatially disperses according to a diffusion process, and the reaction term $f$ is its growth rate. The dependence of $f$ on the moving variable $x-ct$ means that the environment is moving in the $x$-direction, due to the climate change phenomenon whose speed is the parameter~$c$. It is typically assumed that $f$ is negative in the region $-x +ct>>1$, so that the population habitat drifts to the right. We also refer to~$c$ as the forced speed because the population is forced to move at least with speed~$c$ to the right in order to survive.

More precisely, while the growth term is negative in the unfavourable zone far to the left, we will assume that it is increasing with respect to the variable $x-ct$ and that it converges to a monostable growth term far to the right (as it will be stated below). We already highlight here that by monostable, we only mean (see Definition~\ref{def:mono}) that the growth rate of a population is positive when $u \in (0,1)$ (after renormalization, the constant~1 stands for the carrying capacity in the climate envelope). However, we do not assume that the per capita growth rate is maximal at $u=0$, that is the population may be subjected to a weak Allee effect, which is a common feature of ecological species arising for instance from cooperative behaviours such as defense against predation~\cite{allee}.

Such shifting range models were introduced in several papers to study similar ecological problems. In \cite{PL} and \cite{BDD}, the effect of climate change and shifting habitat on invasiveness properties is studied for a system of reaction-diffusion equations. In~\cite{BDNZ,BR1,BR2,Lietal14,Vo}, the authors deal with persistence properties under a shifting climate for a scalar reaction-diffusion equation in dimension 1 and higher, and exhibit a critical threshold for the shifting speed below which the species survives and above which it goes extinct. This problem has also been studied in the framework of integrodifference equations, where time is assumed to be discrete and dispersion is nonlocal~\cite{HZetal14,ZK,ZK13}. However, all the aforementioned papers hold under KPP type assumptions whose ecological meaning is that there is no Allee effect and which mathematically imply that the behaviour of solutions is dictated by a linearized problem around the invaded unstable state. In the context of~\eqref{probleminit}, the KPP assumption typically writes as $f(z,s) \leq \partial_s f(z,0) s$ for all $z \in \R$ and $s \geq 0$.

On the other hand a few papers investigate the case when no KPP assumption is made. In \cite{BRRK}, the authors analyse numerically the effect of climate change and the geometry of the habitat in dimension 2, again in the framework of reaction-diffusion equations, with or without Allee effect. In \cite{BN}, Bouhours and Nadin consider~\eqref{probleminit} when the size of the favourable zone is bounded under rather general assumptions on the reaction~$f$ in the favourable zone (including the classical monostable and bistable cases). More precisely, they have shown the existence of two speeds $\underline{c} < \overline{c}$ such that the population persists for large enough initial data when $c < \underline{c}$ and goes extinct when $c > \overline{c}$. However, for a fixed initial datum, it is not known in general whether there exists a sharp threshold speed delimiting persistence and extinction. We will prove here that, when~$f$ satisfies the Hypotheses~\ref{hypsPb}, that is when the favorable habitat is contracting but its size is not bounded, the answer is positive but also that, unlike in the KPP case~\cite{BDNZ,BR1,BR2,Lietal14,Vo}, the threshold speed depends non trivially on the initial datum.\\

In this paper we are interested in the asymptotic behaviour of the solution $u(t,x,y)$ of \eqref{probleminit} as time goes to infinity, and in particular whether the solution goes extinct or spreads, in the sense of Definition~\ref{def:dyn} below. To do so we will study the problem in the moving frame and consider $u$ to be the solution of Problem~\eqref{problemmoving}, where now $x$ stands for the moving variable and $u$ for the density of the population in the moving environment.

\subsection{Assumptions}

As far as the cross section $\omega$ is concerned, we will focus on the two particular cases 
$$\omega = \mathbb{R}^{N-1}$$
or
$$\omega \mbox{ is an open, smooth and bounded subset of } \R^{N-1}.$$
In this latter case, equation~\eqref{problemmoving} is supplemented with a Neumann boundary condition
$$\partial_n u = 0 , \quad t>0, \ (x,y) \in \R \times \partial \omega,$$
where $n$ denotes the outward unit normal vector at the boundary.
However, up to some slight modifications, other types of boundary conditions such as Dirichlet or Robin boundary conditions may be considered, as discussed in Remark \ref{rk:bdrycond} below. 

Most of our results, and in particular the existence of a threshold speed, hold in both those cases. However, a situation where the cross section is bounded is more prone to mathematical analysis, which allows us to prove that the threshold between extinction or spreading of the solution is also sharp with respect to the initial data. Lastly, we recall that $N\geq 1$, and therefore the bounded cross section case also include the one dimensional case, i.e. $\omega = \{0\} =\mathbb{R}^0$.

\begin{Rk}\label{rk:bdrycond}
Our results remain valid up to relatively minor changes when considering other types of boundary conditions, such as Dirichlet or Robin. The main change is that the value of the linear (resp. nonlinear) speed $2 \sqrt{g_+ '(0)}$ (resp. $c^*$) introduced below should be modified accordingly with the boundary condition. In particular, both speeds may then depend on the shape of the spatial domain $\Omega$, and one may need to assume that the cross section $\omega$  is large enough in order to avoid a situation where extinction always occurs. For the sake of convenience, we restrict ourselves to the Neumann case.
\end{Rk}

Let us now turn to our assumptions on the reaction term~$f$:
\begin{hyps}\label{hypsPb}
\begin{itemize}
\item[$(1)$] The function $x \in\R\mapsto f(x,y,s)$ is nondecreasing for all $s\geq 0$ and $y \in \omega$.
\item[$(2)$] The function $s\mapsto f(x,y,s)$ is $C^{1,r}$ where $0<r<1$, and its $C^{1,r}$ norm on any bounded interval is uniformly bounded with respect to $x\in\R$ and $y\in\omega$. 
\item[$(3)$] There exist $C^{1,r} (\mathbb{R},\mathbb{R})$ functions $g_- $ and $g_+$ such that
$$\lim_{x \to \pm \infty} f (x,y,s) = g_\pm (s) \quad , \quad \lim_{x \to \pm \infty} \partial_s f (x,y,s) = g_\pm ' (s),$$
where the convergences hold in the $C^{0,r} (\mathbb{R}, \mathbb{R})$-topology, uniformly with respect to~$y$. Moreover
\begin{itemize}
\item[$(3')$] $g_- (0) = 0$ and $g_- (s) \leq - s$ for all $s \geq 0$; 
\item[$(3'')$] $g_+$ is of the monostable type, in the sense of Definition~\ref{def:mono} below.
\end{itemize}
\end{itemize}
\end{hyps} 
\begin{Rk}
One may in fact weaken our regularity assumptions on~$f$. In particular, the $s$-derivative of $f$ needs only be H\"{o}lder in a neighborhood of~0. We state our assumptions in such a way for simplicity. 
\end{Rk}
In our proofs, we will make an extensive use of comparison arguments and sliding methods, which can be performed thanks to part~$(1)$ of Hypotheses~\ref{hypsPb}. We complete part~$(3)$ by specifying exactly what we mean throughout this paper by monostable.
\begin{Def}\label{def:mono}
We say that the $C^{1,r}$ function $g_+$ is monostable if
$$g_+ (s) >0 \mbox{ if } s \in (0,1) \mbox{ and } g_+ (s) < 0 \mbox{ if } s \in (-\infty,0) \cup (1, + \infty),$$
$$g_+ ' (0) > 0 > g_+ '(1).$$	
\end{Def}
Note that this definition includes but is not restricted to the so-called Fisher-KPP case, where also $g_+ (u) \leq g_+ ' (0) u$ for all $u \geq 0$.

Under the above assumptions, it is clear that $f(x,y,0)= 0$ for all $x \in \mathbb{R}$ and $y\in\omega$, hence $0$ is a trivial steady state of~\eqref{problemmoving}. In a similar fashion, it is clear that any constant $M \geq 1$ is a supersolution of~\eqref{problemmoving}, which means that in large time the density $u$ should not exceed~1, the constant 1 being the stable state of the limiting equation far to the right.\medskip

Lastly, we add the initial conditon 
\be\label{initcond}u(0,x,y)=u_0(x,y) \ \mbox{ where } \
 0 \leq \not \equiv u_0 \in L^\infty (\R) \mbox{ and } support (u_0) \subset (-\infty, X] \times \omega ,\ee
for some $X>0$. We will also briefly discuss some other types of initial data which decay exponentially or slowly to the right. From a modelling point of view, it may indeed seem natural to assume that the initial population is already established in the whole favourable zone, which in our framework means the right half of the spatial domain. However, we chose to focus on the case when the initial support, or in other words the initial habitat, is bounded to the right. This means that we study spatial invasions into unoccupied regions and thus we mostly deal with invasive species which are initially introduced in a new region, but this assumption also accounts for native species whose distribution are not at equilibrium with current climate~\cite{araujo}. Moreover, this is when our model exhibits the most interesting phenomena, and in particular our analysis highlights the determining role of the initial size of the population. This is to be expected in an ecological setting, as well as in the mathematical model when the favourable zone is bounded. We refer again to~\cite{BN} for partial results as well as remaining open problems in such a framework.\\

Before stating our main results, we recall some properties of the usual monostable homogeneous equation
\be\label{pb:homomono}\begin{cases}
\partial_t v-\Delta v=g_+ (v), \quad t>0, \ z\in\R^N,\\
v(0,z)=v_0(z) \text{ compactly supported, nonnegative and nontrivial}.
\end{cases}\ee
It is well-known that the parabolic equation admits planar travelling wave solutions, i.e positive solutions of the form $v(t,z)=V(z\cdot\nu-ct)$ where $\nu$ is an arbitrary unit vector in $\R^N$, and $V$ satisfies the limits $V(-\infty) = 1$ and $V(+\infty) = 0$. More precisely, such a travelling wave exists if and only if $c \geq c^*$, where $c^* \geq 2 \sqrt{g_+ '(0)}$. In particular, we often refer to $c^*$ as the minimal speed. Moreover, $c^*$ is the asymptotic speed of propagation of solutions of Problem~\eqref{pb:homomono}.

Under the additional KPP hypothesis, i.e. $g_+ (u) \leq g_+ '(0) u$ for all $u \geq 0$, then $c^* = 2 \sqrt{g_+'(0)}$ which corresponds to the minimal speed of the linearized problem at~0. However, as mentioned above, we consider the general monostable case so that the nonlinear speed $c^*$ may be strictly larger than $2 \sqrt{g_+ '(0)}$. Our main results will show that both the linear and nonlinear speed play an important role in the behaviour of solutions of~\eqref{problemmoving}.

\subsection{Main results}
Let us now turn to the study of the large time behaviour of $u$, the solution of Problem \eqref{problemmoving}. Under our assumptions and because we assume little regularity with respect to space variables in the reaction term, solutions are to be understood in a weak sense. Still, solutions satisfy local $W^{1,2}_{p}$ ($p>1$) estimates which is enough to perform comparison principles. In particular the well-posedness as well as the nonnegativity and boundedness of solutions for all positive times follows.

We will exhibit different possible behaviours, which we define below respectively as spreading, vanishing and grounding.
\begin{Def}\label{def:dyn}
Let $u(t,x,y)$ be the solution of \eqref{problemmoving}. We say that:
\begin{itemize}
\item \textbf{spreading} occurs if $\lim_{t \to +\infty} u(t,\cdot) \to p_c^+$ locally uniformly in space, where $p^+_c (x,y)$ is a positive and bounded steady state which tends to 0 (resp. 1) as $x \to -\infty$ (resp. $x \to +\infty$) uniformly with respect to $y$;
\item \textbf{vanishing}, or \textbf{extinction}, occurs if $\lim_{t \to +\infty} u(t,\cdot) \to 0$ uniformly in space;
\item \textbf{grounding} occurs if for any $(x,y) \in \Omega$, we have $\displaystyle{\liminf_{t\to+\infty} u(t,x,y) >0}$, but for any $\varepsilon >0$ there exists $X_\varepsilon >0$ such that
$$\limsup_{t \to +\infty} \sup_{|x| \geq X_\varepsilon} u(t,x,y) \leq \varepsilon.$$
\end{itemize}
\end{Def}
We will show in Section~\ref{sec:classStat} that for any $c \geq 0$ there exists a unique such steady state $p^+_c$, and moreover it is the maximal positive and bounded steady state; see Propositions~\ref{p+c_exists} and~\ref{otherstates}. Note that, from the asymptotics of $p^+_c$ as $x \to \pm \infty$, it may be more accurate to say that the solution only spreads in the favourable zone: far into the unfavourable zone the solution actually decreases to 0, as one should naturally expect.

Let us also mention that, in the grounding case, the number $X_\varepsilon$ only depends on $c$ and $\varepsilon$ but not on the specific choice of the initial datum. Biologically, this means that the number $X_\varepsilon$ would act as un upper bound on the width of the habitat. However, the main point of our results is that the grounding phenomenon is highly unstable, and therefore we will not go into such details.\\

We now proceed to the statement of our main results. Our first theorem exhibits three speed ranges leading to quite different situations:
\begin{Th}\label{th:regime}
Assume that Hypotheses~\ref{hypsPb} hold, and let $u(t,x,y)$ be the solution of~\eqref{problemmoving} together with the initial condition~\eqref{initcond}. 
\begin{enumerate}[$(i)$]
\item If $0 \leq c < 2 \sqrt{g'_+(0)}$, then spreading occurs.
\item If $2\sqrt{g'_+(0)} \leq c < c^*$ (provided such $c$ exists), then both spreading and vanishing may occur depending on the choice of $u_0$. To be more precise, there exist initial data $u_{0,1} > u_{0,2}$ as in~\eqref{initcond} such that spreading occurs for $u_{0,1}$, and vanishing occurs for $u_{0,2}$.
\item If $c \geq c^*$, then vanishing occurs.
\end{enumerate}
\end{Th}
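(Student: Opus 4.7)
The plan is to exploit the pointwise upper bound $f(x,y,u)\le g_+(u)$, which follows from $x\mapsto f(x,y,u)$ being nondecreasing by Hypothesis~\ref{hypsPb}(1) together with the limit as $x\to+\infty$ from Hypothesis~\ref{hypsPb}(3). The three regimes then correspond to the position of $c$ relative to the linear speed $2\sqrt{g_+'(0)}$ and the nonlinear speed $c^*$, and the proof combines comparison with the homogeneous limit equation $\partial_t w-\Delta w-c\partial_x w=g_+(w)$ with the construction of stationary sub- and super-solutions based on the linearization at $u=0$.

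\emph{Part $(iii)$.} For $c>c^*$, let $w$ solve the homogeneous equation above with initial datum $u_0$. Then $u\le w$, and in the stationary frame $w$ becomes the classical monostable equation whose compactly supported data spread at speed $c^*<c$; hence $w\to 0$ uniformly in the moving frame, and so does $u$. The borderline $c=c^*$ is more delicate: in the pushed regime the homogeneous comparison does not vanish in the critical frame, so one must further invoke the heterogeneity of $f$, for instance via a dedicated supersolution built from the critical wave $V_{c^*}$ together with the dissipation provided by $g_-(s)\le -s$ in the unfavourable zone.

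\emph{Part $(i)$.} We implement a hair-trigger argument. Fix a large cylindrical box $Q_R\subset\Omega$ deep in the favourable zone, so that $\partial_s f(\cdot,\cdot,0)\ge g_+'(0)-\delta$ on $Q_R$. After the symmetrizing change of variable $\phi=e^{-cx/2}\psi$, the principal Dirichlet eigenvalue of $-\Delta-c\partial_x-\partial_s f(x,y,0)$ on $Q_R$ tends to $c^2/4-g_+'(0)<0$ as $R\to+\infty$; denote the associated positive eigenfunction by $\phi_R$. For $\epsilon>0$ small, the function $\epsilon\phi_R$ extended by $0$ outside $Q_R$ is a weak stationary subsolution of the full problem, the $O(s^{1+r})$ nonlinear correction being absorbed by the strict negativity of the eigenvalue. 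The strong maximum principle provides $u(1,\cdot)\ge\epsilon\phi_R$ on $Q_R$ once $\epsilon$ is chosen small enough; comparison plus the $t$-monotonicity of the solution starting from a stationary subsolution then forces $u(t,\cdot)$ to converge to a bounded positive steady state above $\epsilon\phi_R$, which must be $p_c^+$ by the classification of Section~\ref{sec:classStat}.

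\emph{Part $(ii)$.} For vanishing, observe that the characteristic equation $\lambda^2-c\lambda+g_+'(0)=0$ admits a real positive root $\lambda$ whenever $c\ge 2\sqrt{g_+'(0)}$, producing the linearized supersolution $W(x)=\min(1,Ae^{-\lambda x})$. If $u_{0,2}$ is compactly supported with sufficiently small sup-norm, the nonlinear correction $f(x,y,s)-\partial_s f(x,y,0)s=O(s^{1+r})$ can be absorbed by a small perturbation of $\lambda$ (the borderline $c=2\sqrt{g_+'(0)}$ requiring a polynomial prefactor in $W$), so that $W$ dominates $u_{0,2}$ and remains a supersolution of the full problem; the pointwise bound $u\le W$ combined with a standard parabolic iteration yields $u\to 0$. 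For spreading, choose $u_{0,1}>u_{0,2}$ compactly supported, concentrated in the favourable zone and of amplitude comparable to $p_c^+$ on a sufficiently large set, arranged so as to be itself a stationary subsolution of the full problem; the $t$-monotonicity of the resulting solution then forces convergence to $p_c^+$. The main obstacle is precisely this last step: in the pushed regime $c\in[2\sqrt{g_+'(0)},c^*)$ the linearization at $0$ is stable on the whole domain, so no linear eigenfunction subsolution is available, and one must construct a genuinely nonlinear, large-amplitude stationary subsolution, relying on the fine structure of $p_c^+$ from Section~\ref{sec:classStat}.
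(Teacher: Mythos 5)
Your overall architecture --- comparison with the homogeneous limit $\partial_t w-\Delta w-c\partial_x w=g_+(w)$ from above, a linear Dirichlet eigenfunction subsolution from below, and a case split at $2\sqrt{g_+'(0)}$ and $c^*$ --- is the paper's, and part $(i)$ together with the subcritical half of part $(iii)$ ($c>c^*$) essentially reproduce Sections~\ref{sec:low} and~\ref{sec:high}. However, the three points you flag as ``delicate'' or as ``the main obstacle'' are precisely where the content of the theorem lies, and in each case the sketch would not close. For the spreading half of part $(ii)$, you correctly note that no linear subsolution is available and that a large-amplitude compactly supported stationary subsolution is required, but you do not construct it, and the suggestion that it should rely on ``the fine structure of $p_c^+$'' points the wrong way: the paper builds it by the Aronson--Weinberger phase-plane method~\cite{AW} applied to a monostable minorant $g_\delta\le f(x,y,\cdot)$ valid for $x\ge X_\delta$, solving $q''+c_1q'+g_\delta(q)=0$ with $q(0)=\eta$ close to $1-\delta$, $q'(0)=0$ and $c+(N-1)/\rho<c_1<c_\delta^*$; it is exactly the hypothesis $c<c^*$ that leaves room for such a $c_1$ and makes the profile hit zero. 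Without this construction the spreading claim in $(ii)$ is unproven.

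The two remaining gaps share a single missing ingredient. For vanishing in part $(ii)$, the assertion that ``$u\le W$ combined with a standard parabolic iteration yields $u\to0$'' is not standard: trapping $u$ under a stationary supersolution only gives monotone convergence to some steady state $p\le W$, and one must still prove $p\equiv0$. The paper does this via the non-existence of ordered ground states (Proposition~\ref{othercritical}, resting on the half-space maximum principle of Lemma~\ref{lemma:maxprinciple}), which applies only when the supersolution decays like $o(e^{-cx/2})$ (resp.\ $o((1+\sqrt{x})e^{-cx/2})$ at $c=2\sqrt{g_+'(0)}$). This is why the paper takes decay rate $\frac{c}{2}+\delta$ rather than a root $\lambda$ of $\lambda^2-c\lambda+g_+'(0)=0$: the smaller root decays too slowly for the ground-state argument to apply (and a nontrivial slowly decaying limit cannot be excluded), while the pure exponential at either root is not a supersolution without KPP, so the perturbation you mention must land strictly above $c/2$. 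The same ingredient is what you are missing at $c=c^*$ in the pushed case: there the homogeneous solution converges to the pushed critical wave in the moving frame, and any supersolution ``built from the critical wave'' cannot vanish since $V_{c^*}(-\infty)=1$; the dissipation from $g_-$ is irrelevant to the favourable zone where the wave persists. The paper instead uses the extremal fast-decaying solution $v$ of $v''+c^*v'+g_+(v)=0$, which satisfies $v(-\infty)=+\infty$ whenever it is not the wave, takes $\min\{\|u_0\|_\infty,v\}$ as a supersolution, and again invokes the non-existence of ground states. You should supply Proposition~\ref{othercritical} (or an equivalent) before any of these vanishing conclusions can be drawn.
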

Recall from~\cite{AW} that the minimal wave speed of~\eqref{pb:homomono} is also the spreading speed of solutions of the Cauchy problem with nonnegative compactly supported initial data. Therefore, the third statement of Theorem~\ref{th:regime} simply means that, when the climate shifts faster than the species spreads in a favourable environment, then the species cannot keep pace with its climate envelope and goes extinct as time goes to infinity. On the other hand, when $c$ is less than $2 \sqrt{g'(0)}$ (statement $(i)$ of Theorem~\ref{th:regime}) which is the speed associated with the linearized problem around $u=0$, then any small population is able to follow its habitat and ultimately thrive. In particular, when $c^* = 2 \sqrt{g'(0)}$, we retrieve a sharp threshold speed between persistence and extinction, which as in the KPP framework of~\cite{BDNZ,BR1,BR2,Lietal14,Vo} does not depend on the initial datum. Let us note here that, while the KPP assumption implies that $c^* = 2 \sqrt{g'(0)}$, the converse does not hold~\cite{hrothe}.

Nonetheless, a striking feature of Theorem~\ref{th:regime} is the fact that when $c\in [2\sqrt{g'(0)}, c^*)$, whether the solution persists or not depends on the initial datum; see statement $(ii)$ above. This is in sharp contrast with the so-called `hair-trigger effect' for the classical homogeneous monostable equation~\eqref{pb:homomono}, whose solution spreads as soon as the initial datum is nontrivial and nonnegative. This result also highlights qualitative differences with the KPP framework of~\cite{BDNZ,BR1,BR2,Lietal14,Vo}, where the persistence of the population depends only on the value of $c$. The biological implication is that under the combination of a weak Allee effect and a shifting climate, the size and the position of the initial population become crucial for the survival of the species.\\

Our two next results aim at showing that the transition from spreading to vanishing is always sharp. First we fix the initial datum and vary the parameter~$c$:
\begin{Th}\label{th:threshold_speed}Assume that Hypotheses~\ref{hypsPb} hold, and let $u(t,x,y)$ be the solution of~\eqref{problemmoving} together with the initial condition~\eqref{initcond}. 

\begin{enumerate}[$(i)$]
\item There exists a threshold speed $c (u_0) \in [2 \sqrt{g_+ '(0)},c^*]$ such that, if $c < c (u_0)$ spreading occurs and if $c > c(u_0)$ vanishing occurs (in the sense of Definition \ref{def:dyn} above).
\item If moreover one of the following two conditions holds:
\begin{enumerate}[$(a)$]
 \item $\omega$ is bounded (including the case $N=1$);
\item $f(x,s)$ does not depend on $y$ and $support (u_0) \subset (-\infty, X] \times [-Y,Y]^{N-1}$ for some $X >0$ and $Y>0$;
\end{enumerate}
then at the threshold speed $c = c(u_0)$:
\begin{itemize}
\item either $c (u_0) \in  (2 \sqrt{g_+ '(0)}, c^*)$ and grounding occurs;
\item or $c  (u_0) = 2 \sqrt{g_+ '(0)}$ and then both vanishing and grounding may occur.
\end{itemize}
In particular, the situation $c (u_0) = c^* > 2 \sqrt{g_+ '(0)}$ never occurs.
\end{enumerate}
\end{Th}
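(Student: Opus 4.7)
The natural first step is a monotonicity of the outcome in $c$, established in the laboratory frame. Setting $\tilde u_c(t,x,y) := u_c(t, x-ct, y)$, so that $\tilde u_c$ solves \eqref{probleminit} with initial datum $u_0$, Hypothesis~\ref{hypsPb}$(1)$ yields $f(x - c_1 t, y, s) \geq f(x - c_2 t, y, s)$ for every $c_1 < c_2$ and $t \geq 0$. Hence $\tilde u_{c_1}$ is a supersolution of the equation at speed $c_2$, and the parabolic comparison principle gives $\tilde u_{c_1} \geq \tilde u_{c_2}$ everywhere. Since the change of frame preserves the supremum norm, vanishing at $c_1$ forces vanishing at every $c_2 > c_1$. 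Combined with Theorem~\ref{th:regime}, the vanishing set $V$ is an upper subinterval of $[0,\infty)$ whose infimum $c(u_0)$ lies in $[2\sqrt{g_+'(0)},c^*]$.

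For part $(i)$, spreading for $c < 2\sqrt{g_+'(0)}$ is already Theorem~\ref{th:regime}$(i)$. For $c \in [2\sqrt{g_+'(0)}, c(u_0))$, vanishing is ruled out by the definition of $c(u_0)$, so $\limsup_{t\to\infty} \|u_c(t,\cdot)\|_\infty > 0$. I would then use parabolic estimates to extract, along time subsequences, limits that are bounded entire-in-time solutions; by standard $\omega$-limit set arguments combined with the classification of positive bounded steady states in Propositions~\ref{p+c_exists}--\ref{otherstates} and the monostable structure of $g_+$ near $u=1$, every such limit coincides with $p^+_c$, which yields spreading.

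For part $(ii)$, under assumption $(a)$ or $(b)$ I would establish openness of the spreading set $S$ in $[0,\infty)$ and openness of the vanishing set $V$ in $(2\sqrt{g_+'(0)},\infty)$; the left endpoint $2\sqrt{g_+'(0)}$ is automatically excluded from openness of $V$ by Theorem~\ref{th:regime}$(i)$. Openness of $S$ uses that spreading at $c_0$ produces, at some time $T$, a compactly supported ignition profile inside the favourable zone that still triggers convergence to $p^+_c$ under a small perturbation of $c$, combined with continuous dependence of parabolic solutions on $c$ on $[0,T]$. Openness of $V$ at an interior point $c_0 > 2\sqrt{g_+'(0)}$ is the more delicate point: under $(a)$ one projects onto the first transverse eigenfunction and under $(b)$ one works with a $y$-independent supersolution, yielding a uniform quenching estimate robust to small perturbations of $c$. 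Once both openness statements are in place, $c(u_0)\notin S$ (otherwise a neighborhood of $c(u_0)$ in $S$ would intersect $(c(u_0),\infty)\subset V$), and $c(u_0)\notin V$ unless $c(u_0)=2\sqrt{g_+'(0)}$ (otherwise a neighborhood of $c(u_0)$ in $V$ would contradict $c(u_0)=\inf V$). The complementary grounding scenario then occurs at $c(u_0)$, except in the boundary case $c(u_0) = 2\sqrt{g_+'(0)}$ where vanishing remains legitimate. Finally, if $c(u_0) = c^* > 2\sqrt{g_+'(0)}$, then $c^*\in V$ by Theorem~\ref{th:regime}$(iii)$ and openness of $V$ at $c^*$ would yield $c^*-\varepsilon \in V$ for some $\varepsilon>0$, contradicting $c(u_0)=c^*$.

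The principal obstacle is the openness of $V$ in the intermediate range $(2\sqrt{g_+'(0)}, c^*)$, where the hair-trigger effect fails and vanishing need not be manifestly stable under perturbations. This is precisely where assumptions $(a)$ and $(b)$ enter, to reduce the transverse dynamics effectively to one dimension and obtain a uniform quenching estimate transferable to the perturbed equation.
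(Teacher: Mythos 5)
Your overall architecture for part $(ii)$ (openness of the spreading set, openness of the vanishing set away from $2\sqrt{g_+'(0)}$) matches the paper's, but there are two genuine gaps. The first, and most serious, is in part $(i)$: for $c\in[2\sqrt{g_+'(0)},c(u_0))$ you rule out vanishing by definition of $c(u_0)=\inf V$ and then claim that every subsequential limit in time must be $p^+_c$ ``by the classification of positive bounded steady states''. This does not follow: Proposition~\ref{otherstates} only says that positive bounded steady states other than $p^+_c$ decay to $0$ as $x\to+\infty$; it does not exclude them, and grounded behaviour (the solution staying near such a ground state) genuinely occurs at the threshold speed. So ``not vanishing'' does not imply ``spreading'', and your argument cannot distinguish $c<c(u_0)$ from $c=c(u_0)$. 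The paper's route is the key idea you are missing: Proposition~\ref{th:boundexp} shows that whenever spreading \emph{fails} at some speed $c_1$, the solution satisfies uniform-in-time exponential decay estimates as $x\to+\infty$; combined with your (correct) monotonicity comparison $u^{c_2}(t,x,y)\le u^{c_1}(t,x+(c_2-c_1)t,y)$ and Lemma~\ref{pratique}, this forces vanishing for every $c_2>c_1$. Hence non-spreading at $c_1$ gives $(c_1,\infty)\subset V$, i.e.\ $c_1\ge\inf V$, and by contraposition spreading must occur for every $c<c(u_0)$.

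The second gap is in part $(ii)$: after excluding spreading and vanishing at $c=c(u_0)$ you assert that ``the complementary grounding scenario then occurs'', but grounding in the sense of Definition~\ref{def:dyn} is a specific statement (pointwise positive liminf everywhere \emph{plus} uniform smallness for $|x|$ large) and the trichotomy is not automatic. This is Proposition~\ref{prop:behavInter} in the paper, and its proof is precisely where hypotheses $(a)$ and $(b)$ enter: under $(b)$ one needs the reflection Lemma~\ref{reflec} (monotonicity of $u$ in $y_i$ for $|y_i|\ge Y$) to upgrade locally uniform smallness to smallness uniform in $y$, together with the exponential bounds of Proposition~\ref{th:boundexp} to control the region $x\to+\infty$. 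Relatedly, your sketch of openness of $V$ at interior points (``project onto the first transverse eigenfunction'') is not what is needed; the paper instead constructs an explicit generalised supersolution decaying like $e^{-\frac{c-\varepsilon}{2}x}$, traps $u^{c'}(T,\cdot)$ below it for all $c'$ slightly below $c(u_0)$ using Proposition~\ref{th:boundexp}, continuity of the solution in $c$ and Lemma~\ref{reflec}, and then invokes the non-existence of nontrivial ordered ground states (Proposition~\ref{othercritical} via Lemma~\ref{lemma:maxprinciple}) to conclude extinction for those $c'$, contradicting part $(i)$. Without some such quantitative supersolution argument the ``uniform quenching estimate robust to small perturbations of $c$'' remains an assertion rather than a proof.
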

The main point of Theorem~\ref{th:threshold_speed} is its first statement, which shows that there still exists, in the general monostable framework, a threshold forced speed below which spreading occurs and above which the solution goes extinct. As mentioned above, the existence of such a threshold for persistence was already known in the KPP framework. However, Theorem~\ref{th:threshold_speed} together with Theorem~\ref{th:regime} clearly imply that $c (u_0)$ depends in a nontrivial way on the initial datum as soon as $c^* > 2 \sqrt{g'(0)}$. 
From an ecological point of view, this means that the persistence of the population is determined by the value of the climate shift speed with respect to this threshold. When the per capita growth rate of the population is optimal at zero density (no Allee effect), this threshold speed is independent of the initial datum, whereas in the presence of a weak Allee effect, this threshold depends on the size of the initial datum. Furthermore, statement $(ii)$ of Theorem~\ref{th:threshold_speed} also provides a more precise picture of the behaviour of the solution in the threshold case. While our results suggest that grounding should not be biologically observed (this is in fact related to our assumption that the favourable zone is unbounded), this notion will play an important role in the proofs of both statements~$(i)$ and~$(ii)$ of Theorem~\ref{th:threshold_speed}.

In our second sharp threshold type result, we instead fix the speed parameter~$c$ and consider a family of initial data. We then investigate the sharpness of the threshold from spreading to vanishing behaviours, in the same spirit as the results of~\cite{DM,DP,MZ,polacik,zlatos} in the spatially homogeneous framework. In our situation, this issue is rather complicated because we cannot use symmetrization techniques as in~\cite{DP,polacik} in order to deal with the case of a multidimensional domain with an unbounded cross section~$\omega$. This leads us to restrict ourselves to a bounded cross section $\omega$, including the one dimensional case.
\begin{Th}\label{th:threshold_u0}
Let Hypotheses~\ref{hypsPb} be satisfied, and assume also that $\omega$ is bounded and $2 \sqrt{g_+ '(0)} < c^*$. Consider $c \in [2\sqrt{g_+ '(0)},c^*)$ and a continuous (in the $L^1 (\R \times \omega)$-topology) family $(u_{0,\sigma})_{\sigma >0}$ of nonnegative, compactly supported and continous initial data, ordered in the following way:
$$\forall \sigma_1 < \sigma_2, \quad  
 u_{0,\sigma_1} < u_{0,\sigma_2} \mbox{ in the support of $u_{0,\sigma_1}$}.$$
Then there exists $\sigma_* \in [0,+\infty]$ such that the corresponding solution $u_\sigma (t,x,y)$ spreads if $\sigma > \sigma_*$, vanishes if $\sigma < \sigma_*$, and is grounded for $\sigma = \sigma_*$ (whenever $\sigma_* \in (0,+\infty)$). 
\end{Th}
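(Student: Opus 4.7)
The plan is to reduce the theorem to two openness claims and then invoke the spreading/vanishing/grounding trichotomy at the threshold. Introduce
\bee
\Sigma_+ := \{\sigma > 0 : u_\sigma \text{ spreads}\}, \qquad \Sigma_0 := \{\sigma > 0 : u_\sigma \text{ vanishes}\}.
\eee
A first monotonicity observation is immediate: the hypothesis gives $u_{0,\sigma_1} \leq u_{0,\sigma_2}$ pointwise for $\sigma_1 < \sigma_2$ (since $u_{0,\sigma_1}$ vanishes outside its own support and $u_{0,\sigma_2} \geq 0$), hence by the comparison principle $u_{\sigma_1}(t,\cdot) \leq u_{\sigma_2}(t,\cdot)$ for all $t > 0$; combined with Proposition~\ref{otherstates} (which identifies $p_c^+$ as the maximal positive bounded steady state), this makes $\Sigma_+$ an upper set and $\Sigma_0$ a lower set of $(0,+\infty)$. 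If both sets are in addition open, then $\sigma_* := \sup\Sigma_0 = \inf\Sigma_+ \in [0,+\infty]$ is the claimed threshold, and whenever $\sigma_* \in (0,+\infty)$ the solution $u_{\sigma_*}$ neither spreads nor vanishes, hence grounds by the trichotomy established in the bounded-$\omega$ setting in the course of proving Theorem~\ref{th:threshold_speed}(ii).

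To prove openness of $\Sigma_+$, suppose $u_{\sigma_0}$ spreads. For any $\eta > 0$ and $R > 0$ there exists $T$ with $u_{\sigma_0}(T,\cdot) \geq p_c^+ - \eta$ on $[-R,R] \times \omega$. The $L^1$-continuity of $\sigma \mapsto u_{0,\sigma}$ combined with parabolic regularisation gives $u_\sigma(T,\cdot) \to u_{\sigma_0}(T,\cdot)$ locally uniformly as $\sigma \to \sigma_0$, whence $u_\sigma(T,\cdot) \geq p_c^+ - 2\eta$ on $[-R,R]\times\omega$ for $\sigma$ close to $\sigma_0$. One then applies a sufficient-condition-for-spreading: for $\eta$ small and $R$ large, any nonnegative Cauchy datum satisfying such a lower bound triggers spreading to $p_c^+$. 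Such a criterion is a by-product of the proofs of Theorems~\ref{th:regime}(ii) and~\ref{th:threshold_speed}(i), and relies on the monotonicity of $f$ in $x$ (Hypothesis~\ref{hypsPb}(1)) together with Propositions~\ref{p+c_exists}--\ref{otherstates}.

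To prove openness of $\Sigma_0$, suppose $u_{\sigma_0}$ vanishes and pick $T$ with $u_{\sigma_0}(T,\cdot) \leq \delta/2$ uniformly, where $\delta$ is small enough that any nonnegative solution of~\eqref{problemmoving} starting at time $T$ from data majorised by $u_{\sigma_0}(T,\cdot) + \delta/2$ still vanishes. The existence of such a $\delta$ follows from $c \geq 2\sqrt{g_+'(0)}$ together with $\partial_s f(x,y,0)\leq g_+'(0)$ (the latter from $f(x,y,s)\leq g_+(s)$, which is a consequence of Hypothesis~\ref{hypsPb}(1)): comparison with the drift-diffusion linearisation $\partial_t v - \Delta v - c\partial_x v = g_+'(0) v$, whose fundamental solution does not propagate at speed $c$ when $c\geq 2\sqrt{g_+'(0)}$, yields vanishing of the nonlinear problem for sufficiently small data enjoying the spatial decay of $u_{\sigma_0}(T,\cdot)$. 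For $\sigma$ close to $\sigma_0$, the nonnegative difference $w := u_\sigma - u_{\sigma_0}$ starts from a compactly supported initial datum of arbitrarily small $L^1$-norm and is itself a subsolution of the same linear equation, so heat-kernel bounds yield $\|w(T,\cdot)\|_\infty \to 0$ as $\sigma\to\sigma_0$; thus $u_\sigma(T,\cdot) \leq u_{\sigma_0}(T,\cdot) + \delta/2$ and the vanishing criterion applies.

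The principal obstacle is the openness of $\Sigma_+$. The naive approach of building a compactly supported spreading subsolution from the linearisation at $u=0$ via a principal eigenfunction on a favourable window fails precisely in the range $c \in [2\sqrt{g_+'(0)},c^*)$: on any cylinder $[X_0,X_1]\times\omega$ with Neumann conditions in $y$ and Dirichlet conditions in $x$, the principal eigenvalue of $-\Delta - c\partial_x - g_+'(0)$ is at least $c^2/4 - g_+'(0) \geq 0$, leaving no room for the $O(u^{1+r})$ nonlinearity to tip the balance. One must therefore use the nonlinear closeness to $p_c^+$ over a large window as the spreading initiator; this construction leans crucially on the boundedness of $\omega$ (for discrete cross-sectional spectra and uniform control) and on the classification of steady states from Section~\ref{sec:classStat}.
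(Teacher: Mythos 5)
Your reduction misses the heart of the theorem. From the monotonicity in $\sigma$ and the openness of the spreading set $\Sigma_+$ and the vanishing set $\Sigma_0$, you can only conclude that $\Sigma_0=(0,\underline{\sigma})$ and $\Sigma_+=(\overline{\sigma},+\infty)$ for some $\underline{\sigma}\leq\overline{\sigma}$; openness gives no reason whatsoever for $\sup\Sigma_0=\inf\Sigma_+$. A priori the complement is a whole closed interval $[\underline{\sigma},\overline{\sigma}]$ on which, by the trichotomy, every $u_\sigma$ is grounded — and this is perfectly compatible with both sets being open. The statement you are asked to prove asserts that this grounding set is a \emph{single point}, and that is precisely the step your proposal omits. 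The paper closes this gap with a separate argument (its Claim~\ref{last}): if $u_{\sigma_1}$ and $u_{\sigma_2}$ are both grounded with $\sigma_1<\sigma_2$, one extracts (along a time sequence, using the uniform exponential bounds of Proposition~\ref{th:boundexp}) either two ordered positive ground \emph{states} — impossible by Proposition~\ref{othercritical} — or, in the critical case $c=2\sqrt{g_+'(0)}$ where the energy need not be bounded, two ordered positive entire \emph{solutions} decaying like the critical exponential, which are then shown to coincide by a sliding argument combined with the parabolic maximum principle of Lemma~\ref{lemma:paramaxprinciple}; the strict ordering of the initial data (via a small shift $\zeta$) then yields a contradiction. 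Without some version of this non-ordering property for ground states, your $\sigma_*$ is not well defined as a single threshold.

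Two secondary remarks. First, your openness arguments are essentially those of the paper (they mirror Claims~\ref{claim:notspreadspeed} and~\ref{claim:notvanishspeed}), but the vanishing-openness step at the critical speed $c=2\sqrt{g_+'(0)}$ cannot be run off the bare linearisation $\partial_t v-\Delta v-c\partial_x v=g_+'(0)v$: there is no spectral gap left to absorb the $O(s^{1+r})$ error in $f(x,y,s)\leq g_+(s)$, and one needs the polynomially corrected supersolution $(1+(x+X)^\beta)e^{-\frac{c}{2}(x+X)}$ with $\beta<1/2$ together with the $C^{1,r}$ regularity of $g_+$, as in the vanishing scenario of Section~\ref{sec:interm}. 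Second, note also that the KPP-type bound $f(x,y,s)\leq\partial_s f(x,y,0)s$ you invoke in passing is not available under Hypotheses~\ref{hypsPb}; only $f\leq g_+$ holds, and $g_+$ is a general monostable nonlinearity.
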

Theorem~\ref{th:threshold_u0} again highlights how the outcome depends on the initial datum. It shows that extinction and spreading are the two reasonable outcomes, while grounding is a rather unstable phenomenon. Indeed, while there exist initial data such that grounding occurs, any mild increase (resp. decrease) of such a datum will lead to spreading (resp. extinction) of the solution. In particular, a larger initial population is more likely to survive despite the shift of the favourable zone. Note that, recalling that the support of a function is the closure of the set where it is positive, the assumptions of Theorem~\ref{th:threshold_u0} also imply that the support of $u_{0,\sigma}$ strictly increases when $\sigma$ increases. The proof that we use relies on this hypothesis but we believe that it is not necessary and that it is enough to assume that $u_{0,\sigma_2} \geq \not \equiv u_{0,\sigma_1}$.\\

All our results above deal with initial data whose support is bounded in the right direction, see~\eqref{initcond}. As we mentioned before, the biological interpretation of this hypothesis is that the initial population is not at equilibrium with climate. On the other hand, if the initial population already inhabits the whole favourable zone, i.e. it does not decay to the right, then the situation is rather different and the population always persists, regardless of the climate change speed~$c$ and the specific choice of~$u_0$.

This leads us to consider the intermediate situation of exponentially decaying initial data. In our last result, we are able to find precisely when the survival of the species depends on the size of the initial datum, and when it depends only on its decay as $x \to +\infty$.
\begin{Th}\label{th:last}
Let Hypotheses~\ref{hypsPb} be satisfied and define $\alpha^* := \frac{c^*- \sqrt{ c^* \,^2 - 4 g_+ '(0)}}{2}.$ 
Let also $u(t,x,y)$ be the solution of~\eqref{problemmoving} with an initial condition $u_0$ which is bounded, nonnegative, and such that
$$u_0 (x,y) \sim A e^{-\alpha x}$$
as $x \to +\infty$ where $\alpha >0 $ and $A>0$. Define also
$$c_\alpha := \frac{\alpha^2 + g'_+ (0)}{\alpha} \geq 2 \sqrt{g'_+ (0)}.$$
\begin{enumerate}[$(i)$]
\item If $\alpha \geq \sqrt{g'_+ (0)}$, then spreading occurs for all $c < 2 \sqrt{g'_+ (0)}$ and vanishing occurs for all $c > c^*$. In the speed range $(2 \sqrt{g'_+ (0)},c^*)$, both spreading and vanishing may occur depending on the initial datum~$u_0$.
\item If $\alpha \in (\alpha^*, \sqrt{g'_+ (0)})$, then $2 \sqrt{g'_+ (0)} < c_\alpha < c^*$, spreading occurs for all $c < c_\alpha$ and vanishing occurs for all $c > c^*$. In the speed range $(c_\alpha,c^*)$, both spreading and vanishing may occur depending on the initial datum~$u_0$.
\item If $\alpha \leq \alpha^*$, then $c_\alpha \geq c^*$, spreading occurs for all $c < c_\alpha$ and vanishing occurs for all $c > c_\alpha$.
\end{enumerate}
\end{Th}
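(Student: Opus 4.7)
My plan follows the same pattern as Theorems~\ref{th:regime}--\ref{th:threshold_u0}: construct tailored sub- and super-solutions of~\eqref{problemmoving} and apply the comparison principle. The key new ingredient is the dispersion relation governing exponentially decaying tails in the moving frame. Observe that a stationary profile $e^{-\alpha x}$ satisfies the linearisation $-\Delta u - c \partial_x u = g_+'(0) u$ at $+\infty$ precisely when $c\alpha - \alpha^2 = g_+'(0)$, that is, $c = c_\alpha$; hence $e^{-\alpha x}$ is a stationary supersolution of the linearised equation if $c > c_\alpha$ and a subsolution if $c < c_\alpha$. I will also repeatedly use that, by Hypothesis~\ref{hypsPb}(1) and the limits $f \to g_\pm$, one has $f(x,y,u) \leq g_+(u)$ for all $(x,y,u)$, so planar travelling waves of the homogeneous equation $\partial_t w - \Delta w = g_+(w)$, transplanted in the moving frame, yield natural supersolutions of~\eqref{problemmoving}.

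\emph{Vanishing.} For the vanishing statements (cases (i)--(ii) with $c>c^*$, and case (iii) with $c>c_\alpha$), I would use a travelling-wave supersolution. For each admissible speed $s\geq c^*$, let $V_s$ denote the planar travelling wave of $\partial_t w - \Delta w = g_+(w)$ with $V_s(-\infty)=1$, $V_s(+\infty)=0$, and leading-edge decay rate $\alpha(s)$; here $\alpha(c^*) = \alpha^*$, and by the dispersion relation $\alpha(c_\alpha) = \alpha$ in case (iii). In cases (i)--(ii) I would take $s=c^*$: since $\alpha \geq \alpha^*$ the tail of $V_{c^*}$ decays no faster than $e^{-\alpha x}$, so a sufficiently large shift $X_0 \gg 0$ gives $V_{c^*}(\,\cdot\, - X_0) \geq u_0$. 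In case (iii) I would take $s=c_\alpha$, whose tail matches the decay of $u_0$. In both cases $\bar u(t,x,y) := V_s\bigl(x + (c-s)t - X_0\bigr)$ is a supersolution of~\eqref{problemmoving} drifting to $-\infty$ in the moving frame at speed $s-c>0$, and comparison forces $u(t,\cdot,\cdot)\to 0$ uniformly.

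\emph{Spreading.} For $c < 2\sqrt{g_+'(0)}$, the datum $u_0$ dominates some compactly supported datum for which Theorem~\ref{th:regime}(i) provides spreading, and comparison concludes. The genuinely new regime is $c \in [2\sqrt{g_+'(0)}, c_\alpha)$ in cases (ii) and (iii); note that in case (iii) this may include speeds $c \geq c^*$, for which Theorem~\ref{th:regime} would predict extinction of compactly supported data. In this regime I would build a positive stationary subsolution of the form
\[
\underline u(x) = \epsilon\bigl(e^{-\alpha x} - \eta\, e^{-\beta x}\bigr)^+,
\]
where $\beta$ is chosen in the nonempty window $(\alpha, \alpha_-(c))$ with $\alpha_-(c) := \tfrac12\bigl(c-\sqrt{c^2-4g_+'(0)}\bigr)$ (nonempty precisely because $c < c_\alpha$ forces $\alpha < \alpha_-(c)$). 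Taking $\eta$ large pushes the support of $\underline u$ to the right, where $\partial_s f(x,y,0)$ is within $\delta$ of $g_+'(0)$; a direct computation then gives the subsolution inequality for $\epsilon$ small enough. Since $u_0 \sim A e^{-\alpha x}$ and the linearisation at~$0$ has the positive growth rate $\alpha^2 - c\alpha + g_+'(0) > 0$ on the far right, I would show that $u(T,\cdot,\cdot) \geq \underline u$ for $T$ sufficiently large, after which monotone iteration from $\underline u$ forces convergence to the maximal steady state $p_c^+$ (Proposition~\ref{p+c_exists}), establishing spreading.

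\emph{``Both possible'' statements and main obstacle.} The mixed-behaviour claims in cases (i) and (ii) reduce to Theorem~\ref{th:regime}(ii): a large multiple of $u_0$ dominates from below a compactly supported datum chosen to spread, while a small exponential profile is dominated from above by a travelling-wave supersolution drifting off, exactly as in the vanishing step. The main obstacle I anticipate is the subsolution construction in the range $c \in [2\sqrt{g_+'(0)}, c_\alpha)$: both coefficients of $e^{-\alpha x}$ and $e^{-\beta x}$ in the subsolution inequality have the same sign, and the argument rests on the slowest exponential $e^{-\alpha x}$ controlling the far-right behaviour; one must fine-tune $\beta$ close to $\alpha$ while keeping $\eta$ large enough to push the support into the near-$g_+$ region. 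A secondary delicate point is the passage from $u(T,\cdot,\cdot) \geq \underline u$ back to global convergence to $p_c^+$, which relies on the uniqueness and maximality of $p_c^+$ among positive bounded steady states established in Section~\ref{sec:classStat}.
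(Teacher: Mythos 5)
Your overall strategy (homogeneous travelling structures as supersolutions for vanishing, two-exponential subsolutions for spreading below $c_\alpha$, reduction to Theorem~\ref{th:regime} for the mixed statements) matches the paper's, but your vanishing step for cases $(i)$--$(ii)$ rests on a false fact about the minimal front. You assert that the leading-edge decay rate of $V_{c^*}$ is $\alpha(c^*)=\alpha^*=\lambda_-(c^*)$, so that $V_{c^*}$ decays no faster than $e^{-\alpha x}$ when $\alpha\geq\alpha^*$. This is only true in the pulled case $c^*=2\sqrt{g_+'(0)}$ (where cases $(ii)$--$(iii)$ are essentially vacuous). In the pushed case $c^*>2\sqrt{g_+'(0)}$ --- precisely the case where Theorem~\ref{th:last}$(i)$--$(ii)$ have content --- the minimal wave corresponds to the \emph{extremal} trajectory and decays like $e^{-\lambda_+(c^*)x}$ with $\lambda_+(c^*)=\frac{c^*+\sqrt{c^{*2}-4g_+'(0)}}{2}=g_+'(0)/\alpha^*>\sqrt{g_+'(0)}$; the paper itself records this asymptotic in the proof of Proposition~\ref{othercritical}. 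Consequently, for every $\alpha$ in case $(ii)$ and for $\alpha\in[\sqrt{g_+'(0)},\lambda_+(c^*))$ in case $(i)$, the datum $u_0\sim Ae^{-\alpha x}$ decays strictly slower than $V_{c^*}$ and no shift $V_{c^*}(\cdot-X_0)$ can dominate it. The repair is to use supercritical waves $V_s$ with $s\in(c^*,c)$, whose decay rate $\lambda_-(s)<\alpha^*\leq\alpha$ is slower than that of $u_0$ (this is in substance the Rothe--Uchiyama spreading-speed result for exponentially decaying data that the paper simply cites, comparing $u$ with the homogeneous Cauchy solution).

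Two further soft spots. First, in the ``both possible'' statements the vanishing examples concern speeds $c<c^*$, for which there is no travelling wave of speed less than $c$; a wave $V_s$ with $s\geq c^*>c$ drifts the wrong way in the moving frame (it converges to $1$, not $0$), so ``exactly as in the vanishing step'' does not apply. One must instead use a truncated pure exponential $e^{-\gamma x}$ with $\gamma$ strictly between the roots $\lambda_\pm(c)$ (e.g.\ $\gamma=\sqrt{g_+'(0)}$ in case $(i)$, $\gamma=\alpha$ in case $(ii)$), which is a strict supersolution of the linearisation and decays in time by comparison with the equation at a slightly smaller advection speed --- this is the construction of Section~\ref{sec:interm}. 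Second, your passage from the stationary subsolution $\underline u$ to spreading is incomplete: the monotone limit $p\geq\underline u$ does not contradict $p(+\infty,\cdot)=0$, since $\underline u$ itself vanishes at $+\infty$, so Proposition~\ref{otherstates} alone does not force $p\equiv p_c^+$. The paper resolves this by making the subsolution travel: the construction is valid uniformly for advection speeds $c'\in[c,c+\varepsilon)$, and the subsolutions with $c'>c$ drift rightward at speed $c'-c$, yielding $\liminf_{x\to+\infty}\inf_y\lim_{t\to+\infty}u(t,x,y)>0$, which is what actually rules out the decaying steady states.
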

Theorem~\ref{th:last} shows how the dynamics remarkably changes with the decay rate parameter~$\alpha$. First, when~$\alpha$ is large enough, the situation is the same as in Theorem~\ref{th:regime}. However, as one reduces $\alpha$, the intermediate speed range where both spreading and vanishing occur starts shrinking, and eventually disappears at $\alpha = \alpha^*$. Then, when $\alpha < \alpha^*$, the threshold speed~$c_\alpha$ no longer depends on the specific choice of the initial datum. Finally, the threshold speed~$c_\alpha$ clearly tends to $+\infty$ as $\alpha \to 0$, which implies that spreading always occurs when the initial datum does not decay, or decays slower than any exponential, as $x \to +\infty$.

Naturally, the fact that the outcome is so sensitive to the decay of the initial datum is related to the unboundedness of the favourable zone as well as our choice of a monostable nonlinearity. Though our arguments could be extended to the bistable (strong Allee effect) case, handling a bounded favourable zone is in fact much more complicated. Indeed, while some of our results remain valid, in particular the existence of an intermediate range speed where the outcome depends on the initial datum, some new phenomena are also expected due to the lack of monotonicity with respect to the climate shift speed. We hope to investigate this issue in a future work.

\bigskip

\textbf{\underline{Organisation of the paper}}
\bigskip

In Section \ref{sec:classStat} we study the stationary problem corresponding to~\eqref{problemmoving} and prove that there exists a (unique) maximal stationary solution that does not decay to 0 as $x\to+\infty$. In Section~\ref{sec:3speed} we examine the different speed ranges and prove Theorem~\ref{th:regime}. In particular we show that the hair-trigger effect does not hold in the intermediate speed range $c \in [2 \sqrt{g'(0)},c^*)$. In Section \ref{sec:sharpthresholds}, we investigate the existence of sharp threshold phenomena either with respect to the speed $c$ or with respect to the initial datum $u_0$. A key step is some uniform in time exponential estimates on any solution with a compactly supported initial datum, see Proposition~\ref{th:boundexp}. Then Section~\ref{sec:sharpspeed} and Section~\ref{sec:sharpinit} are devoted respectively to the proofs of Theorems~\ref{th:threshold_speed} and~\ref{th:threshold_u0}.  Our last Theorem~\ref{th:last} on the behaviour of solutions of~\eqref{problemmoving} with exponentially decaying initial conditions is dealt with in Section~\ref{sec:last}. Finally, we include in an Appendix~\ref{A:maxpple} two maximum principles (elliptic and parabolic) that will be used throughout different proofs in the paper.

\section{Classification of the stationary solutions}\label{sec:classStat}

This section is devoted to stationary solutions of \eqref{problemmoving}. First, we prove the existence of a maximal positive and bounded steady state~$p_c^+$ which is involved in the spreading notion that we introduced in Definition~\ref{def:dyn}; see Proposition~\ref{p+c_exists}. Then we will show that~$p_c^+$ is the unique (positive and bounded) stationary solution which does not decay as $x \to +\infty$. Among the stationary solutions which decay as $x \to +\infty$, some have a faster exponential decay (we will refer to them as ground states) and will play an important role in the proof of our sharp threshold results. Proposition~\ref{othercritical} states that, at least when $\omega$ is bounded, the set of ground states cannot be ordered. This will in turn guarantee, in a later section, that grounding phenomena (in the sense of Definition~\ref{def:dyn}) are unstable with respect to perturbations of the initial datum.

\subsection{The maximal steady state}

The first property we prove is the existence of a maximal positive stationary solution $p_c^+$. We recall that large time convergence to this maximal steady state is referred to as spreading.
\begin{Prop}\label{p+c_exists}
For any $c \geq 0$, there exists a positive steady state $p^+_c (x,y)$ such that $p^+_c(-\infty,\cdot)=0$, $p^+_c(+\infty,\cdot)=1$ (both convergences are understood to be uniform with respect to the second variable).

Moreover, it is increasing with respect to~$x$, and it is maximal among positive and bounded steady states, i.e. any other such steady state $p$ satisfy $p < p^+_c$.

\end{Prop}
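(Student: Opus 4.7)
My plan is to construct $p^+_c$ as the monotone-in-$t$ limit of the parabolic solution $u(t,x,y)$ of~\eqref{problemmoving} starting from $u_0 \equiv 1$. The first step is to check that the constant~$1$ is a stationary supersolution: since $f$ is nondecreasing in $x$ and $g_+(1)=0$, we have $f(x,y,1) \leq g_+(1)=0$, so by the parabolic comparison principle $u(t,\cdot)$ is nonincreasing in $t$ and bounded in $[0,1]$. Standard parabolic estimates then give $C^2_{loc}$ convergence as $t \to +\infty$ to a stationary solution $p^+_c \in [0,1]$ of the elliptic problem.

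For positivity I would produce a compactly supported stationary subsolution. On the cylinder $I_{x_0,R}=(x_0-R,x_0+R)\times \omega$ with $x_0,R$ large, consider the principal eigenpair $(\lambda_{x_0,R},\phi_{x_0,R})$ of the linearized operator $-\Delta - c\partial_x - \partial_s f(\cdot,\cdot,0)$, with Dirichlet conditions at $x=x_0\pm R$ and Neumann conditions on $\partial \omega$. Since $\partial_s f(x,y,0)\to g_+'(0)>0$ as $x\to +\infty$, the eigenvalue is negative for $x_0,R$ large, and the $C^{1,r}$ regularity of $f$ near $s=0$ ensures that $\varepsilon \phi_{x_0,R}$, extended by $0$, is a stationary subsolution for $\varepsilon$ small. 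Since $\varepsilon \phi_{x_0,R}\leq 1=u_0$, parabolic comparison yields $p^+_c \geq \varepsilon \phi_{x_0,R}>0$. Monotonicity in $x$ follows from the monotonicity of $f$ in $x$: for $h>0$, $u(t,x+h,y)$ is a supersolution of the original equation with the same constant initial datum, so $u(t,x+h,y)\geq u(t,x,y)$ passes to the limit; strict monotonicity then comes from the strong maximum principle applied to the shift difference.

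Since $p^+_c$ is bounded and nondecreasing in $x$, the pointwise limits $p^\pm(y):=\lim_{x\to\pm\infty} p^+_c(x,y)$ exist in $[0,1]$. Translating in $x$ and using interior elliptic estimates together with the convergence $f(x,\cdot,\cdot)\to g_\pm$ in the $C^{0,r}$ topology, $p^\pm$ solves $-\Delta_y p^\pm = g_\pm(p^\pm)$ on $\omega$ with Neumann boundary conditions. To identify $p^+\equiv 1$ and $p^-\equiv 0$, I would compare with spatially homogeneous ODE solutions. For $p^+$: the monostable ODE $\dot v = g_+(v)$ starting from any positive constant converges monotonically to $1$; applying this with initial constant $\inf_{\overline{\omega}} p^+ > 0$ and invoking parabolic comparison on $\omega$, where $p^+$ is a stationary solution of the same parabolic problem, forces $p^+ \geq 1$. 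Symmetrically, $g_-(s)\leq -s$ combined with comparison against $\dot v = g_-(v)$ starting from $\sup_{\overline{\omega}} p^-$ gives $p^-\equiv 0$.

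For maximality, let $p$ be any positive bounded steady state. I would first show $p\leq 1$: any sequence $(x_n,y_n)$ along which $p\to \sup p > 1$ can be translated and, by elliptic estimates and compactness, gives rise to a solution of either the $g_+$ or $g_-$ limit equation attaining its supremum strictly greater than $1$, which is ruled out by the strong maximum principle since $g_\pm(s)<0$ for $s>1$. Hence $u_0\equiv 1 \geq p$, parabolic comparison yields $p^+_c \geq p$, and the strong maximum principle applied to $p^+_c - p$ forces strict inequality unless $p=p^+_c$. I anticipate that the main technical obstacle lies in establishing \emph{uniformity in $y$} of the convergences $p^+_c(x,\cdot)\to 0,1$ when $\omega$ is unbounded: one must rule out, via additional translation arguments in $y$ combined with Liouville-type statements for the limit equation on $\mathbb{R}\times \mathbb{R}^{N-1}$, the possibility that $p^+_c$ degenerates along sequences $y_n$ escaping to infinity.
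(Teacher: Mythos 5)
Your overall strategy (monotone-in-time limit from a constant stationary supersolution, identification of the limits at $x=\pm\infty$, maximality by comparison plus the strong maximum principle) is the same as the paper's, and the parts concerning monotonicity in $x$, the limit $p^-\equiv 0$ via $g_-(s)\leq -s$, and maximality are sound. However, there is a genuine gap at the crucial step where you must rule out $p^+_c(+\infty,\cdot)=0$: your compactly supported subsolution does not exist in the full range of speeds covered by the proposition. The principal eigenvalue of $-\Delta-c\partial_x-\partial_s f(\cdot,\cdot,0)$ on a large cylinder with Dirichlet conditions in $x$ is computed, via the substitution $\psi=e^{cx/2}\phi$, to converge to $\tfrac{c^2}{4}-g_+'(0)$ as the cylinder grows; it is therefore negative only when $c<2\sqrt{g_+'(0)}$. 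For $c\geq 2\sqrt{g_+'(0)}$ the state $0$ is \emph{stable} with respect to compactly supported perturbations in the moving frame (this is exactly why vanishing can occur in the intermediate speed range of Theorem~\ref{th:regime}), so no compactly supported positive stationary subsolution exists and your argument for positivity of $p^+_c$, and hence for $p^+\equiv 1$, collapses precisely in the regime $c\in[2\sqrt{g_+'(0)},+\infty)$ where the proposition is most needed.

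The paper circumvents this by using a subsolution that is \emph{not} compactly supported: it takes an ignition-type nonlinearity $g_\delta\leq f(x,y,\cdot)$ valid for $x\geq X_\delta$, whose travelling wave $V_*$ has positive speed $c_\delta$, and sets $\underline{u}(x)=\max\{0,V_*(-x+X_\delta)\}$, supported on the half-line $[X_\delta,+\infty)$ and tending to $1-\delta$ as $x\to+\infty$. Because the profile is reversed, the advection term has the favourable sign and the subsolution inequality reduces to $(c+c_\delta)V_*'\leq 0$, which holds for \emph{every} $c\geq 0$. Since $\underline{u}$ depends on $x$ only, the resulting lower bound $\liminf_{x\to+\infty}\inf_{y\in\omega}p^+_c\geq 1-\delta$ is automatically uniform in $y$, which also disposes of the uniformity issue you flag at the end. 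To repair your proof you would need to replace the eigenfunction subsolution by such a half-line travelling-wave subsolution (or an equivalent device); the rest of your argument, including the slightly more direct a priori bound $p\leq 1$ used for maximality, then goes through.
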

\begin{proof}
We first construct $p^+_c$.  
Choose any constant $M >1$, and note that $M$ is a supersolution of \eqref{problemmoving}. Recall also that 0 is a trivial solution. Thus the solution~$u^M$ of~\eqref{problemmoving} with initial condition $u_0\equiv M$ is decreasing in time and, by standard parabolic estimates, it converges locally uniformly to a nonnegative and bounded stationary solution. Denote this stationary solution by $p^+_c$ and prove that it satisfies the required properties.

\begin{claim}\label{claim1}
The function 
$x\mapsto p^+_c(x,\cdot)$ is increasing and $p^+_c(-\infty,\cdot)=0$ and $p^+_c(+\infty,\cdot)=1$, uniformly with respect to the second variable.
\end{claim}
First, by the monotonicity of $f(x,y,s)$ with respect to $x$, it is clear that for any $x' >0$ the function $u^M (\cdot + x', \cdot,\cdot)$ is a supersolution of~\eqref{problemmoving} which coincides with $u^M$ at time $t=0$. Therefore, by the comparison principle and passing to the limit as $t \to +\infty$, we conclude that $p_c^+ (x,y)$ is nondecreasing with respect to~$x$.

It now follows from the monotonicity and the boundedness of $p^+_c$ that the limits $p^+_c (\pm \infty,y)$ exist and are nonnegative. By standard estimates and passing to the limit in the equation, we get that these limits are at least locally uniform with respect to~$y$, and that 
$$\Delta p^+_c (-\infty,\cdot) + g_- (p^+_c (-\infty ,\cdot)) = 0,$$
as well as 
$$\Delta p^+_c (+\infty,\cdot) + g_+ (p^+_c (+\infty ,\cdot)) = 0.$$
On one hand, it easily follows from the first equation and the negativity of $g_-$ that $p^+_c (-\infty,\cdot) =0$. In the case when $\omega = \mathbb{R}^{N-1}$, it remains to check that the convergence is uniform with respect to~$y$, but the argument is in fact very similar. Indeed, for any sequences $x_n \to - \infty$ and $y_n\in\R^{N-1}$ we consider $p^+_n (\cdot)=p^+_c(x+x_n,y+y_n)$ and using standard elliptic estimates we can conclude that $p^+_n$ converges to $p^+_\infty$ as $n\to+\infty$, with $p^+_\infty$ a bounded solution of $\Delta p^+_\infty+c\partial_x p^+_\infty+g_-(p^+_\infty)=0$, with $g_-(s)\leq -s$ for all $s\in\R^+$. This again implies that $p^+_\infty \equiv 0$, and because of our arbitrary choice of sequences, the uniform convergence of $p^+_c (x,\cdot)$ to 0 as $x \to -\infty$ follows. 

On the other hand, the second equation is of the monostable type and admits two nonnegative stationary states, namely~0 and~1. Proceeding as above, we reach the conclusion that
\be\label{lem1ing1}
\limsup_{x \to +\infty} \sup_{y \in \omega} p^+_c (x,y) \leq 1,
\ee
which in particular, together with a strong maximum principle and the monotonicity of $p^+_c$ with respect to $x \in \mathbb{R}$, implies that
$$p^+_c < 1.$$
However we need to rule out the possibility that $p^+_c (+\infty, \cdot)$ may be~0. To do so, we go back to our construction of $p^+_c$ above and introduce some subsolutions, using the fact that the limit equation as $x \to +\infty$ admits travelling waves with positive speed. For any small $\delta >0$, one can find a function $g_\delta$ and a positive real number $X_\delta$ such that, for all $x \geq X_\delta$, $y\in\omega$ and $s \in [0,M]$, we have
$$f (x,y,s) \geq g_\delta (s),$$
and moreover $g_\delta$ is an ignition type function, namely $g_\delta (s) = 0$ if $s \in [-\delta,0] \cup \{1-\delta\}$, and $g_\delta (s) >0$ if $s \in (0,1-\delta)$. This is another standard case (see~\cite{AW}) in which the corresponding reaction-diffusion equation
$$\partial_t v - \Delta v = g_\delta (v),$$
which here we pose in the spatial domain $\mathbb{R} \times \omega$ (with a Neumann boundary condition if $\omega$ is bounded) again admits travelling wave solutions. More precisely, there exists $c_\delta >0$ and a global in time solution $V_* (x-c_\delta t)$, satisfying the limits $V_* (-\infty) = 1 - \delta$ and $V_* (+\infty) = - \delta$. Rewriting the above equation in the moving frame with speed $c_\delta$ in the $x$-direction, one gets as usual the following ordinary differential equation:
$$V_* '' + c_\delta V_* ' + g_\delta (V_*) = 0.$$
Furthermore, it is known that $V_*$ is a decreasing function~\cite{AW}. Up to some shift, we can then assume that
$$V_* (0) = 0 \ \mbox{ and } V_* < 0 \mbox{ in } (0,+\infty).$$
Let us now introduce
$$\underline{u} (x) := \max \{ 0 , V_* (-x+ X_\delta ) \} \},$$
and show that this is a subsolution of \eqref{problemmoving}. To check this claim, we only need to consider the half-space $\{ x  > X_\delta \}$ where $\underline{u} >0$. We compute
\begin{eqnarray*}
\partial_t \underline{u} - \partial_{x}^2 \underline{u} - c \partial_{x} \underline{u} - f(x,y,  \underline{u}) & \leq & - V_* '' + c V_* ' - g_\delta (V_*)\\
& \leq & (c + c_\delta) V_* ' \\
& \leq & 0,
\end{eqnarray*}
where the last inequality follows from the monotonicity of $V_*$. We conclude that we have indeeed constructed a subsolution $\underline{u} \leq M$ and, by comparison, we get that $\underline{u} \leq p^+_c$. In particular,
$$\liminf_{x \to +\infty} \inf_{y \in \omega} p^+_c (x,y) \geq  V_* (-\infty) = 1 -\delta.$$
As $\delta$ could be chosen arbitrarily small and recalling also~\eqref{lem1ing1}, this proves that $p^+_c (+\infty,y)=1,$
uniformly with respect to~$y$.

To complete the proof of Claim~\ref{claim1}, it only remains to see that $p^+_c$ is not only nondecreasing but also increasing with respect to~$x$, which simply follows from the strong maximum principle. The next claim concludes the proof of Proposition~\ref{p+c_exists}.

\begin{claim}\label{claim2}
The steady state~$p^+_c$ is maximal among positive and bounded steady states, i.e. any other such steady state $p$ satisfy $p < p^+_c$ in $\Omega$.
\end{claim}
Before proving this second claim, let us first observe that the construction of $p^+_c$ above actually did not depend on the choice of the constant $M>1$. Indeed, choose two constants $M_1 > M_2 > 1$ and let $p_1$, $p_2$ be the limits as $t \to +\infty$ of the corresponding solutions. The comparison principle immediately implies that $p_1 \geq p_2$. But also, $M_2 > 1 \geq p_1$ and hence $p_2 \geq p_1$. Therefore $p_1 \equiv p_2$ and, as announced, the state $p^+_c$ constructed above does not depend on the choice of~$M>1$.

Claim~\ref{claim2} easily follows. For any bounded steady state $p$, we can choose~$M$ large enough so that $p \leq M$ and $p \leq p^+_c$ by the comparison principle. Then, by the strong maximum principle, either $p \equiv p^+_c$ or $p < p^+_c$. This also ends the proof of Proposition~\ref{p+c_exists}.\end{proof}

\subsection{Stationary solutions and ground states}

	We now prove that $p^+_c$ is the unique positive and bounded state which converges to 1 as $x \to +\infty$, and other such states actually converge to 0. We will then also prove that, when $\omega$ is bounded, steady states which have a fast enough decay as $x \to +\infty$ (in a sense to be made precise below) cannot be ordered.
\begin{Prop}\label{otherstates}
Any other positive and bounded stationary solution~$p (x,y)$ converges to 0 as $x \to +\infty$ locally uniformly with respect to~$y$.
\end{Prop}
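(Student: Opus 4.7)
The plan is a proof by contradiction in three steps: extract a nontrivial limit of translates via compactness, identify this limit as the constant $1$ using the hair-trigger effect for the homogeneous monostable equation, and conclude by sliding the ignition-type subsolution already built in the proof of Proposition~\ref{p+c_exists}. I begin by noting that Proposition~\ref{p+c_exists} gives $p \leq p^+_c$, and if $p \not\equiv p^+_c$ the strong maximum principle applied to the nonnegative solution $p^+_c - p$ of a linear elliptic equation yields $p < p^+_c$ strictly. A translation/compactness argument analogous to Claim~\ref{claim1} (using Hypothesis~\ref{hypsPb}(3') that $g_-(s) \leq -s$) also gives $p(x,y) \to 0$ as $x \to -\infty$ locally uniformly in $y$. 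Assume for contradiction there exist $\delta_0 > 0$, a compact $K \subset \omega$, and sequences $x_n \to +\infty$, $y_n \in K$ (with $y_n \to y_\infty$ after extraction) such that $p(x_n, y_n) \geq \delta_0$. Setting $p_n(x,y) := p(x+x_n,y)$ and using the $W^{2,q}_{loc}$ estimates from Hypothesis~\ref{hypsPb}(2), a subsequence converges in $C^{1,\alpha}_{loc}$ to a bounded nonnegative solution $p_\infty$ of
$$\Delta p_\infty + c\partial_x p_\infty + g_+(p_\infty) = 0$$
on $\R \times \omega$, with $p_\infty \leq 1$ (since $p \leq p^+_c$ and $p^+_c(\cdot+x_n,\cdot)\to 1$) and $p_\infty(0, y_\infty) \geq \delta_0$; hence $p_\infty > 0$ everywhere by the strong maximum principle.

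Next I claim $p_\infty \equiv 1$. Pick a smooth compactly supported bump $\phi$ with $0 \not\equiv \phi \leq p_\infty$. By the Aronson-Weinberger hair-trigger effect~\cite{AW} for the monostable equation $\partial_t v - \Delta v - c\partial_x v = g_+(v)$ on $\R \times \omega$ (whether $\omega$ is bounded with Neumann condition or $\omega = \R^{N-1}$), the parabolic solution starting from $\phi$ converges to $1$ locally uniformly as $t \to +\infty$. Since $p_\infty$ is a stationary solution of this equation with $\phi \leq p_\infty$, parabolic comparison yields $p_\infty \geq 1$, hence $p_\infty \equiv 1$. As every subsequential limit of $\{p_n\}$ is identically $1$, we conclude that $p(x,y) \to 1$ as $x \to +\infty$, locally uniformly in $y \in K$.

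The contradiction now follows from a sliding argument. Fix $\delta \in (0, 1/2)$, let $V_*$ be the decreasing traveling wave of the ignition-type approximation with $V_*(-\infty) = 1-\delta$, and define the $y$-independent family
$$\underline u_Y(x) := \max\{0,\, V_*(-x + Y + X_\delta)\}, \qquad Y \in \R.$$
On its support $\{x > Y+X_\delta\}$, a direct computation shows $\Delta \underline u_Y + c\partial_x \underline u_Y + f(x,y,\underline u_Y) \geq -(c+c_\delta)V_*' > 0$, so $\underline u_Y$ is a \emph{strict} subsolution of the original equation. By the previous step, for $Y$ large enough $\underline u_Y \leq p$ on $\R \times K$, since $\underline u_Y \leq 1-\delta$ on its support while $p \geq 1-\delta/2$ there. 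Set $Y^* := \inf\{Y \in \R : \underline u_Y \leq p \text{ on } \R \times K\}$. If $Y^* > -\infty$, at $Y = Y^*$ the inequality $\underline u_{Y^*} \leq p$ is tight with a touching point, which cannot occur at the support boundary $x = Y^* + X_\delta$ (where $\underline u_{Y^*} = 0 < p$), nor in the limit $x \to +\infty$ (where a gap $\geq \delta/2$ persists on $K$), nor at an interior smooth point (where the strict subsolution inequality contradicts the strong maximum principle applied to $p - \underline u_{Y^*} \geq 0$). A continuity/perturbation argument then allows decreasing $Y$ slightly below $Y^*$ while preserving the inequality, contradicting the definition of $Y^*$. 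Hence $Y^* = -\infty$, so $\underline u_Y \leq p$ on $\R \times K$ for every $Y$; letting $Y \to -\infty$ gives $p \geq 1-\delta$ on $\R \times K$, contradicting the decay $p(x,y) \to 0$ as $x \to -\infty$ on $K$ obtained in the first paragraph.

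The main obstacle I anticipate is the case analysis in the sliding step: ruling out the possibility that the touching of $\underline u_{Y^*}$ to $p$ happens only in the limit $x \to -\infty$ (where both vanish) requires a perturbation argument exploiting both the strict subsolution inequality on the support and a uniform positive gap of $p - \underline u_{Y^*}$ on the compact active region, which must itself be obtained from the convergence $p \to 1$ at $+\infty$ and the strict positivity of $p$ at finite $x$.
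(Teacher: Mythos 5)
Your overall skeleton (contradiction, limit of translates, identification of the limit as $1$, conclusion by sliding) matches the paper's, but two of the three steps contain genuine errors.

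First, the identification $p_\infty \equiv 1$. You invoke the Aronson--Weinberger hair-trigger effect for the equation \emph{with} the drift term $-c\partial_x v$. That is false for $c \ge c^*$: writing $v(t,x,y)=\tilde v(t,x+ct,y)$, the function $\tilde v$ solves the drift-free monostable equation and spreads at speed $c^*$, so for $c>c^*$ the solution of the drift equation starting from a compactly supported bump converges to $0$ locally uniformly, not to $1$ (this is precisely the mechanism behind Theorem~\ref{th:regime}$(iii)$). Moreover, for $c\ge c^*$ the homogeneous drift equation admits positive bounded stationary solutions other than $1$ (e.g.\ travelling-wave profiles), so $p_\infty\equiv 1$ is not merely unproven but false without further input; and in any case you would also need every subsequential limit to be nontrivial, which positivity of $p$ alone does not give. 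The paper supplies exactly the missing ingredient: before passing to the limit it establishes the uniform-in-$x$ lower bound~\eqref{other2} on a half-infinite strip, by pulling the translating Dirichlet eigenfunction $\xi\phi_R(x-x_n+ct,y-y_\infty)$ back from $x_n$ to any fixed $x$ (exploiting that $p$ is stationary), and only then compares $p_\infty$ with an \emph{$x$-independent} subsolution, for which the advection term vanishes identically. You cannot bypass~\eqref{other2}.

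Second, the sliding step. The function $\underline u_Y(x)=\max\{0,V_*(-x+Y+X_\delta)\}$ is a subsolution only where $f(x,y,\cdot)\ge g_\delta(\cdot)$, i.e.\ on $\{x\ge X_\delta\}$; as soon as $Y<0$ its support $\{x>Y+X_\delta\}$ enters the region where this inequality fails, and the chain $V_*''-cV_*'+f\ge V_*''-cV_*'+g_\delta(V_*)=-(c+c_\delta)V_*'$ breaks at its first inequality. Hence you cannot send $Y\to-\infty$, and the conclusion $p\ge 1-\delta$ on $\R\times K$ is unreachable --- indeed it must be, because your steps two and three never use $p\not\equiv p_c^+$: if they were sound they would apply verbatim to $p=p_c^+$ and show that $p_c^+$ itself tends to $0$ as $x\to+\infty$, contradicting Proposition~\ref{p+c_exists}. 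The correct closing move is the paper's Claim~\ref{claim13}: slide the maximal state $p_c^+(\cdot-X,\cdot)$ (which \emph{is} a subsolution of the full equation for every rightward shift, by monotonicity of $f$ in $x$) underneath $p$ and decrease the shift to $0$, controlling the tails $x\ge L$ and $x\le -L$ by weak maximum principles in the regions where $\partial_s f<0$ near the relevant values; this yields $p\equiv p_c^+$, which is the desired contradiction with ``$p$ is another solution''.
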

Note that this property implies that, if $f(x,s)$ does not depend on $y$, then neither does $p^+_c$, as the problem is ``independent of $y$'' and $p^+_c$ is the unique steady state that does not converges to 0 as $x\to+\infty$. In a similar fashion, if $f(x,y,s)$ is periodic with respect to $y$, then so is $p^+_c$. Those facts actually easily follow from the above construction, where $p^+_c$ was obtained as the limit of the Cauchy problem with a (large) constant initial datum. 

While Proposition~\ref{otherstates} will be enough for our purpose, one may still wonder if we can prove more than that. In fact, it is clear from our first main Theorem~\ref{th:regime} that, when $c < 2 \sqrt{g_+ '(0)}$, then there does not exist any positive and bounded steady state other than $p^+_c$. 
Note also that when $c < c^*$, by a combination of the argument below and the next section, the convergence to 0 in the above proposition is also uniform with respect to~$y$. Indeed, if the convergence to 0 is not uniform with respect to~$y$, then one can show that $p(x,y)$ converges locally uniformly to 1 along some sequence $(x_n,y_n)$ with $x_n \to +\infty$ and $|y_n| \to +\infty$. In particular, one can put below $p$ a compactly supported subsolution as in the spreading  scenario of Section~\ref{sec:interm}, which in turn implies that $p \equiv p^+_c$. However, when $c > c^*$, we expect that there exist conical wave solutions, which converge locally uniformly to 0 as $x \to +\infty$ but not uniformly; see for instance~\cite{hamelmonneau,ninotani} for details on conical waves.
\begin{proof}
Let us consider a stationary solution $p(x,y)$, positive and bounded, and assume also that it does not converge (locally uniformly in $y$) to 0 as $x \to +\infty$, i.e. there exists a sequence $(x_n,y_n)_n \subset \Omega$ with $(y_n)_n$ bounded and such that
$$\lim_{n \to +\infty} x_n = + \infty \quad , \quad \lim_{n \to +\infty} p(x_n,y_n) >0.$$
Up to extraction, we assume that $y_n \to y_\infty \in \overline{\omega}$ as $n \to +\infty$, and by standard elliptic estimates the function $p_n (x,y) = p_n (x + x_n, y)$ converges locally uniformly to a solution of 
$$\Delta p_\infty + c \partial_x p_\infty + g_+ (p_\infty) = 0,$$
in $\Omega$. 
Since $p_\infty (0,y_\infty) > 0$, we get by the strong maximum principle (together with Hopf lemma in the case when $\omega$ is bounded)
that $p_\infty >0$ in $\overline{\Omega}$. Therefore, for any $R >0$, there exists $\xi \in (0,1)$ and $n$ large such that
\begin{equation}\label{other1}
\forall n' \geq n, \ \forall (x,y) \in B_R (x_n,y_\infty) \cap \overline{\Omega}, \quad p(x,y) \geq \xi,
\end{equation}
where $B_R (x_n,y_\infty)$ denotes the ball of radius $R$ centered at $(x_n,y_\infty)$. 

Let us now construct a subsolution lying below the steady state~$p$. 
\begin{Rk}
We only write the details in the case $\omega = \R^{N-1}$. The case when $\omega$ is bounded is in fact simpler, as it is sufficient to construct a subsolution which depends only on $x$ and thus satisfies the Neumann boundary condition automatically.
\end{Rk}
Define $\Lambda_R$ and $\phi_R$ respectively the principal eigenvalue and principal eigenfunction of the Laplacian with Dirichlet boundary condition in $B_R (0)$ the ball of radius $R>0$ centered at 0, i.e.:
$$\left\{
\begin{array}{l}
\Delta \phi_R  = \Lambda_R \phi_R  \  \mbox{ in } B_R (0), \vspace{3pt}\\
\phi_R = 0 \ \mbox{ on } \partial B_R (0), \quad \phi_R >0 \  \mbox{ in } B_R (0).\\ 
\end{array}
\right.
$$
Up to some normalization, we assume that $\|\phi_R\|_\infty = 1$, and moreover we extend it by 0 outside $B_R (0)$. It is well-known that $\Lambda_R \to 0$ as $R \to +\infty$. We can let $R$ large enough so that
$$\Lambda_ R + \frac{g_+ ' (0) }{2} >0,$$
and then $\xi >0$ such that~\eqref{other1} holds. In particular $ p( \cdot) \geq \xi \phi_R (x-x_n,y-y_\infty)$ for any $n \geq n'$.

Up to decreasing $\xi$, there exists $X_\xi$ such that, for all $x \geq X_\xi$, $y \in \R^{N-1}$ and $s \in [0,\xi]$, we have
\be\label{machin11}
f(x,y,s) \geq \frac{g_+ ' (0)}{2} s .
\ee
One can check that $\xi \phi_R (x-x_n +ct ,y-y_\infty)$ is a subsolution of \eqref{problemmoving} up to the time $T_n = \frac{x_n - R - X_\xi}{c}$, which is simply the time at which the support of $\phi_R$ first intersect the zone where \eqref{machin11} may no longer hold. Note that $T_n >0$ provided that $n$ is large enough. 

Now take any $x > X_\xi  + R$ and $y \in B_{R/2} (y_\infty)$. Choose $n$ large enough so that $x_n >x$, and $t = (x_n-x)/c \in (0,T_n)$. Applying the comparison principle one can get that
$$p(x,y)\geq \xi \phi_R(x-x_n+ct,y-y_\infty) =\xi \phi_R (0,y-y_\infty).$$
In particular,
\begin{equation}\label{other2}
\inf_{x \geq X_\xi +R , |y-y_\infty| \leq \frac{R}{2}} p(x,y) >\inf_{|y-y_\infty| \leq \frac{R}{2}} \xi \phi_R(0,y-y_\infty)>0 .
\end{equation}
As mentioned above, the same approach can be used and leads to a similar inequality when $\omega$ is bounded. We are now in a position to make the following claim.

\begin{claim}\label{claim11} It holds true that
$$\lim_{x \to +\infty} p(x,y) = 1,$$
locally uniformly with respect to $y \in \omega$.\end{claim}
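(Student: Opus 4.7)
The plan is to use the uniform-in-$x$ lower bound \eqref{other2} together with a compactness argument and a parabolic sub-solution to show that any accumulation point of the translates $p(\cdot + n, \cdot)$ as $n \to +\infty$ must be identically equal to $1$; a standard compactness argument then yields the claimed local uniform convergence.

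By standard elliptic estimates and the uniform $L^\infty$ bound on $p$, the family $\{p(\cdot + n, \cdot)\}_{n \geq 0}$ is precompact in $C^2_{\mathrm{loc}}(\overline{\Omega})$, and any accumulation point $p_\infty$ is a bounded nonnegative classical solution of the limit equation
$$\Delta p_\infty + c \partial_x p_\infty + g_+(p_\infty) = 0 \quad \text{in } \mathbb{R} \times \omega,$$
(with Neumann boundary condition on $\mathbb{R} \times \partial \omega$ when $\omega$ is bounded), inheriting from \eqref{other2} the lower bound $p_\infty(x, y) \geq \zeta_0 := \inf_{|y - y_\infty| \leq R/2} \xi \phi_R(0, y - y_\infty) > 0$ for all $x \in \mathbb{R}$ and $y \in B_{R/2}(y_\infty) \cap \omega$. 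A maximum principle argument analogous to the one leading to \eqref{lem1ing1} gives $p_\infty \leq 1$.

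To obtain the matching lower bound $p_\infty \geq 1$, I would introduce the reduced parabolic problem on the cross-section,
$$\partial_t \bar W - \Delta_y \bar W = g_+(\bar W) \quad \text{in } \omega,$$
(with Neumann boundary condition when applicable) with a smooth nonnegative initial datum $\bar W(0, \cdot)$ supported in $B_{R/2}(y_\infty)$, bounded by $\zeta_0$, and not identically zero. Viewed as a function of $(t, x, y)$ that is independent of $x$, $\bar W$ is then also a solution of the full limit parabolic problem $\partial_t W - \Delta W - c \partial_x W = g_+(W)$, and since $\bar W(0, \cdot) \leq p_\infty$ while $p_\infty$ is a stationary super-solution, parabolic comparison yields $\bar W(t, y) \leq p_\infty(x, y)$ for all $t \geq 0$ and $(x, y) \in \mathbb{R} \times \omega$. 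The monostable structure of $g_+$ forces $\bar W(t, y) \to 1$ locally uniformly in $y$ as $t \to +\infty$: when $\omega$ is bounded this follows from the classification of nonnegative Neumann steady states of the reduced equation (only $0$ and $1$) combined with a strong maximum principle and an ODE sub-solution argument, while for $\omega = \mathbb{R}^{N-1}$ this is the classical hair-trigger effect for multidimensional monostable equations. Hence $p_\infty \equiv 1$, and since every subsequential limit of $p(\cdot + n, \cdot)$ equals $1$, the full family converges locally uniformly to $1$, proving the claim. The main technical point I anticipate is the hair-trigger convergence of $\bar W$ in the unbounded cross-section case; everything else consists of routine compactness, elliptic estimates, and parabolic comparison arguments.
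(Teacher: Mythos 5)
Your proposal is correct and follows essentially the same route as the paper: translate by $x_n\to+\infty$, pass to a limit $p_\infty$ solving the homogeneous equation, use the uniform lower bound \eqref{other2} to place below $p_\infty$ an $x$-independent solution of the cross-sectional monostable equation, and invoke the hair-trigger effect (Aronson--Weinberger, justified here since $g_+'(0)>0$) together with parabolic comparison to conclude $p_\infty\equiv 1$. The paper does exactly this, using the initial datum $\eta_\infty \chi_{\{|y-y_\infty|\le R/2\}}$ in place of your smooth one, so no substantive difference.
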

First, we know that $\limsup_{x \to +\infty} p (x,y) \leq 1$, as follows from $p \leq p^+_c$ by Proposition~\ref{p+c_exists}. Then let $x_n \to +\infty$, and as before up to extraction of a subsequence $p (x+ x_n,y)$ converges locally uniformly to a solution $p_\infty$ of
\begin{equation}\label{other_lim}
\Delta p_\infty + c \partial_x p_\infty + g_+ (p_\infty) = 0.
\end{equation}
However, now we know not only that $p_\infty$ is positive, but also from~\eqref{other2} that
$$\eta_\infty := \inf_{x \in \mathbb{R}, |y- y_\infty| \leq \frac{R}{2} } p_\infty (x,0) >0.$$
\begin{Rk}
In the case when $\omega$ is bounded, one may use the strong maximum principle to conclude that $\inf p_\infty >0$. By a simple comparison with solutions of the underlying ODE, it immediately follows that $p_\infty \geq 1$, hence $p_\infty \equiv 1$. 
\end{Rk}
Let us again focus on the case when $\omega = \R^{N-1}$. By the classical results of Aronson and Weinberger~\cite{AW}, the solution of
$$\partial_t v = \partial_y^2 v + g_+ (v) $$
with initial datum $\eta_\infty \chi_{|y-y_\infty| \leq \frac{R}{2}}$ converges locally uniformly to~1. Here $\chi$ denotes a characteristic function. As the function $v$ does not depend on~$x$, it also satisfies
$$\partial_tv-\Delta v-c\partial_x v=g_+ (v)$$
in $\R^N$, with $v(0,x,y)\leq p_\infty(x,y)$ and, by the parabolic comparison principle, we conclude again that~$p_\infty =1$. The above claim is proved.

\begin{claim}\label{claim12}
The convergence of $p$ to~1 as $x \to +\infty$ is also uniform with respect to~$y$. 
\end{claim}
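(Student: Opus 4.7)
I would argue by contradiction. Suppose $p$ does not tend to $1$ uniformly in $y \in \mathbb{R}^{N-1}$ as $x \to +\infty$; then there exist $\epsilon_0 > 0$ and sequences $x_n \to +\infty$, $y_n \in \mathbb{R}^{N-1}$ with $p(x_n, y_n) \leq 1 - \epsilon_0$. By Claim~\ref{claim11}, $(y_n)$ cannot be bounded, so up to extraction $|y_n| \to +\infty$. Setting $p_n(x, y) := p(x + x_n, y + y_n)$, elliptic estimates together with Hypothesis~\ref{hypsPb}$(3)$ yield, along a further subsequence, a locally uniform limit $p_\infty$: a bounded nonnegative solution on $\mathbb{R}^N$ of
\begin{equation*}
\Delta p_\infty + c \partial_x p_\infty + g_+(p_\infty) = 0, \qquad p_\infty(0, 0) \leq 1 - \epsilon_0 < 1.
\end{equation*}
The plan is to force $p_\infty \equiv 1$, yielding a contradiction.

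The crux of the proof is the uniform positive lower bound $\mu^* := \liminf_{x \to +\infty} \inf_{y \in \mathbb{R}^{N-1}} p(x, y) > 0$. I would obtain it by rerunning the moving-subsolution construction from the proof of Claim~\ref{claim11} with the centre $y_\infty$ in $\xi \phi_R(x - x_n + ct, y - y_\infty)$ chosen arbitrarily. Indeed, Claim~\ref{claim11} already grants, for every $y_0 \in \mathbb{R}^{N-1}$, the convergence $p(x, y_0) \to 1$ as $x \to +\infty$; hence, using standard elliptic estimates, $p \geq \xi := 1/2$ on each ball $B_R(x_n, y_0)$ for $x_n$ sufficiently large (possibly depending on $y_0$). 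Crucially, the threshold $X_\xi$ after which $f(x, y, s) \geq \tfrac{1}{2} g_+'(0)\, s$ holds on $\{x \geq X_\xi\} \times \omega \times [0, \xi]$ is \emph{independent} of $y_0$, thanks to the uniformity in $y$ of the asymptotics of $f$ in Hypothesis~\ref{hypsPb}$(3)$. Applying the parabolic comparison principle as in the proof of Claim~\ref{claim11}, now at the point $y = y_0$ and at time $t = (x_n - x)/c$, gives
\begin{equation*}
p(x, y_0) \geq \tfrac{1}{2}\, \phi_R(0, 0) = \tfrac{1}{2} \qquad \text{for all } x \geq X_\xi + R.
\end{equation*}
Letting $y_0$ range over $\mathbb{R}^{N-1}$ then delivers $\mu^* \geq 1/2 > 0$.

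From the pointwise inequality $p_n(x, y) \geq \inf_{y'} p(x + x_n, y')$, passing to the liminf as $n \to +\infty$ gives $p_\infty \geq \mu^*$ uniformly on $\mathbb{R}^N$. Let $V(t)$ solve the ODE $V'(t) = g_+(V(t))$ with $V(0) = \mu^*$; monostability of $g_+$ yields $V(t) \to 1$ as $t \to +\infty$. Viewed as a spatially constant solution of the parabolic equation $\partial_t u - \Delta u - c \partial_x u = g_+(u)$ on $\mathbb{R}^N$, while $p_\infty$ is a stationary solution of the same equation with $V(0) \leq p_\infty$, the parabolic comparison principle forces $V(t) \leq p_\infty$ for all $t \geq 0$. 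Letting $t \to +\infty$ gives $p_\infty \geq 1$, hence $p_\infty \equiv 1$, contradicting $p_\infty(0, 0) \leq 1 - \epsilon_0$. The main delicate point throughout is to verify that every constant appearing in the moving-subsolution comparison can indeed be chosen uniformly in $y_0$, which is exactly where the $y$-uniformity in Hypothesis~\ref{hypsPb}$(3)$ is used decisively.
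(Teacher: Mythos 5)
Your argument is correct in structure but takes a somewhat different route from the paper, and it contains one quantitative slip worth fixing. The paper does not pass to a limit $p_\infty$ at all: for each small $\delta$ it takes the Aronson--Weinberger compactly supported subsolution $\underline{v}_R^\delta$ of the limiting monostable problem, whose \emph{maximum is already $1-2\delta$}, slides it leftward in the moving frame from a region where Claim~\ref{claim11} guarantees $p$ is close to $1$, and directly obtains $\inf_{x\geq X_\delta+R} p(x,y_0)\geq 1-2\delta$ with $X_\delta$, $R$ independent of $y_0$; letting $\delta\to 0$ finishes the proof with no compactness step. You instead reuse the \emph{small} eigenfunction subsolution $\xi\phi_R$ to get only a uniform-in-$y$ positive lower bound $\mu^*>0$, and then upgrade $\mu^*$ to $1$ by extracting a limit $p_\infty$ of translates and comparing with the ODE $V'=g_+(V)$. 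Both routes are valid; the paper's buys the conclusion in one comparison at the cost of quoting the near-$1$ bump subsolution from \cite{AW}, while yours only needs the principal Dirichlet eigenfunction but requires the extra limiting/ODE argument (which is essentially the paper's own remark for the bounded-$\omega$ case).

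The slip: you set $\xi:=1/2$ and assert that $f(x,y,s)\geq \tfrac12 g_+'(0)s$ holds on $[0,\xi]$ for $x$ large. For a general monostable $g_+$ (no KPP assumption) this linearized lower bound~\eqref{machin11} is only guaranteed for $s$ in a small neighbourhood of $0$; a weak Allee effect can make $g_+(s)$ much smaller than $\tfrac12 g_+'(0)s$ at $s=1/2$. The fix is immediate and does not change your argument: take $\xi$ small enough that \eqref{machin11} holds, note that Claim~\ref{claim11} still gives $p\geq 1/2\geq \xi\geq \xi\phi_R$ on the balls $B_R(x_n,y_0)$, and conclude $\mu^*\geq \xi>0$, which is all your subsequent ODE comparison needs.
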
 
\begin{Rk}
Obviously this part can be skipped when $\omega$ is bounded. 
\end{Rk}
The argument is quite similar to the proof of~\eqref{other2}. Let $\delta >0$ be arbitrarily small, and $X_\delta$ be such that $x \geq X_\delta$ and $y \in \R^{N-1}$ imply that $f(x,y,s) \geq g_+^\delta (s)$ where $g_+^\delta$ is a monostable type function, whose zeros are 0 and $1-\delta$. Then there exists a subsolution $\underline{v}_R^\delta$ of
$$\Delta \underline{v}_R^\delta + g_+^\delta (\underline{v}_R^\delta) \geq 0$$
in $\mathbb{R}^N$, with compact support $B_R (0)$ and such that $$\max \underline{v}_R^\delta = \underline{v}_R^\delta (0,0) = 1- 2\delta.$$
We refer again to Aronson and Weinberger~\cite{AW} for such a construction and omit the details. Proceeding as above, for any $y_0 \in \R^{N-1}$, we define $\underline{w}(t,x,y):=\underline{v}_R^\delta(x-n+ct,y-y_0)$ and observe that $\underline{w}$ is a subsolution of \eqref{problemmoving} for all $t\in(0,\frac{n-R-X_\delta}{c})$, assuming $n$ large enough (depending a priori on $y_0$). Thanks to Claim~\ref{claim11} and by the comparison principle, we can compare $p$ with $\underline{w}$ for any $n$ large enough, on the interval of time~$[0, \frac{n-R-X_\delta}{c}]$. This leads to the inequality, for any $y_0 \in \mathbb{R}^{N-1}$,
\begin{equation}\label{plim1b}
\inf_{x \geq X_\delta +R}  p(x,y_0) \geq 1 - 2\delta.
\end{equation}
Noting that $X_\delta$ and $R$ do not depend on $y_0$, and that $\delta$ can be chosen arbitrarily small, we conclude as claimed that $p (x,y)$ converges to 1 as $x \to +\infty$ uniformly with respect to $y \in \R^{N-1}$. 

Furthermore, the above inequality also implies that, for any $L >0$, then
\begin{equation}\label{pinf}
\inf_{x \in (-L,L), y \in \R^{N-1}} p(x,y) > 0.
\end{equation}
Indeed, if there exists sequences $x_n \in (-L,L)$ and $y_n \in \R^{N-1}$ such that $p(x_n,y_n) \to 0$ as $n\to+\infty$, then by a strong maximum principle argument, the sequence $p(x_n+ x,y_n +y)$ also converges locally uniformly to~0, which contradicts the inequality~\eqref{plim1b}. 

Finally let us prove the following claim, using a sliding argument.
\begin{claim}\label{claim13}We have that
 $p\equiv p^+_c$ the maximal steady state. 
 \end{claim}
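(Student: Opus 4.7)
My plan is to prove Claim~\ref{claim13} by a sliding argument. Since Proposition~\ref{p+c_exists} gives $p \leq p_c^+$, it remains to show the reverse inequality $p \geq p_c^+$. I would consider the family of shifted functions $p^\tau(x,y) := p_c^+(x - \tau, y)$ for $\tau \geq 0$; by the monotonicity of $f$ in $x$, each $p^\tau$ is a stationary subsolution of~\eqref{problemmoving}. Setting
\[ \tau_* := \inf\{\tau \geq 0 \,:\, p^\tau \leq p \text{ in } \Omega \}, \]
the claim reduces to establishing that $\tau_* = 0$.

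For the initialization step (showing that $\tau_*$ is finite), I would split the domain. On a bounded region $\{|x| \leq R\} \times \omega$, the positive lower bound~\eqref{pinf} combined with the fact that $p^\tau$ becomes arbitrarily small there as $\tau \to +\infty$ (since $p_c^+ \to 0$ uniformly at $-\infty$) yields $p^\tau \leq p$. Near $\pm\infty$, where both $p$ and $p_c^+$ share the same limits, I would rely on exponential decay estimates coming from the linearization at the equilibria: near $-\infty$ both functions decay like $e^{\lambda x}$ with $\lambda>0$ the positive root of $\lambda^2 + c\lambda + g'_-(0) = 0$, and shifting $p_c^+$ rightward by $\tau$ multiplies its prefactor by $e^{-\lambda \tau}$; a symmetric argument near $+\infty$ uses $g_+'(1) < 0$. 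For $\tau$ large enough, these estimates yield $p^\tau \leq p$ everywhere, so $\tau_* < +\infty$.

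For the main step $\tau_* = 0$, I would argue by contradiction. Assuming $\tau_* > 0$, set $w := p - p^{\tau_*} \geq 0$; a direct computation shows $w$ is a non-negative supersolution of a linear elliptic equation of the form $\Delta w + c\,\partial_x w + b(x,y)\, w \leq 0$ with bounded $b$. The strong maximum principle for non-negative supersolutions then forces either $w \equiv 0$ or $w > 0$ in the interior. If $w \equiv 0$, then $p^{\tau_*}$ is an exact stationary solution, which yields $f(x,y,p_c^+(x - \tau_*, y)) = f(x - \tau_*, y, p_c^+(x - \tau_*, y))$ throughout $\Omega$; combined with the monotonicity of $f$ in $x$ and the strict separation $g_-(s) < 0 < g_+(s)$ for $s \in (0,1)$, this leads to a contradiction. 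If instead $w > 0$ strictly, the same exponential-rate estimates at $\pm\infty$, now applied to $w$ and to $\partial_x p_c^+(\cdot - \tau_*, \cdot)$ (which share the same rates), give $w \geq \varepsilon\, \partial_x p_c^+(\cdot - \tau_*, \cdot)$ for some small $\varepsilon > 0$. This implies $p \geq p^{\tau_* - \varepsilon}$, contradicting the infimum definition of $\tau_*$.

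The hard part, clearly, is the matched asymptotic behavior of $p$ and $p_c^+$ at $\pm\infty$: the standard sliding argument relies on strict inequality at the ``boundary'' of the domain, and this breaks down here since the relevant boundary is at infinity, where the two functions have identical limits. The remedy is the sharp exponential decay analysis at the unstable regimes $0$ and $1$, whose rates are controlled via the linearizations $g'_-(0) \leq -1$ and $g_+'(1) < 0$; without this analysis, neither the initialization nor the strict decrease of $\tau_*$ can be carried out.
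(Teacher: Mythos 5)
Your overall strategy --- sliding $p_c^+(\cdot-\tau,\cdot)$ under $p$ and showing the optimal shift is zero --- is exactly the paper's, and the initialization on the compact middle region via \eqref{pinf} is the same. The gap is in how you handle the comparison near $x=\pm\infty$. You propose to match exact exponential asymptotics: $p$ and $p_c^+$ both behaving like $Ce^{\lambda x}$ at $-\infty$ (and $1-p$, $1-p_c^+$ like prescribed exponentials at $+\infty$) with identical rates and comparable positive prefactors. None of this is available: such asymptotics are never established for an arbitrary bounded positive steady state $p$, and under Hypotheses~\ref{hypsPb} they need not hold at the rates you claim, because $f(x,y,\cdot)$ is only assumed to converge to $g_\pm$ with no rate whatsoever, so the decay of $p$ at $-\infty$ cannot be read off the linearization $\lambda^2+c\lambda+g_-'(0)=0$ of the limit problem. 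Even granting the rates, the prefactor for $p$ could a priori vanish (faster decay), in which case no rightward shift of $p_c^+$ ever gets below $p$ near $-\infty$ and your initialization fails. The same unproven asymptotics reappear in your final step (the inequality $w\ge\varepsilon\,\partial_x p_c^+$), so both halves of your argument rest on them.

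The paper's remedy avoids decay rates entirely: fix $L$ so large that on $\{x\ge L\}$ both $p$ and the shifted $p_c^+$ exceed $1-\delta$, where $\partial_s f<g_+'(1)/2<0$, and on $\{x\le -L\}$ both lie below $\delta$, where $\partial_s f<g_-'(0)/2<0$. Then $q=p_c^+(\cdot-\overline X,\cdot)-p$ satisfies a linear elliptic inequality whose zeroth-order coefficient is strictly negative wherever $q>0$, so a weak maximum principle on each half-cylinder (with $q\le 0$ on $\{x=\pm L\}$ and $q\to 0$ at infinity, both consequences of the uniform limits already proved in Proposition~\ref{p+c_exists} and Claims~\ref{claim11}--\ref{claim12}) forces $q\le 0$ there. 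The same device, after an $\varepsilon$-perturbation of the shift on $[-L,L]$ using only the uniform bound on $\partial_x p_c^+$ from elliptic estimates, rules out $\tau_*>0$ when the touching sequence escapes to $x=\pm\infty$; the bounded case is handled by the strong maximum principle much as you do. Note also that your dichotomy ``$w\equiv0$ or $w>0$'' does not by itself cover an infimum attained only along $|x_n|\to\infty$ or $|y_n|\to\infty$, which is precisely where the weak maximum principle (resp.\ a translation argument in $y$) is needed. You should replace the asymptotic matching by this stability/maximum-principle argument.
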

 First we will find $\overline{X}>0$ such that, for all $(x,y) \in \Omega$,
\begin{equation}\label{p+cslide}
p^+_c (x-\overline{X},y)\leq p(x,y).
\end{equation}
To do so, let $\delta >0$ and $L>0$ be respectively small and large enough so that:
$$ \forall x \geq L, \ \forall y \in \omega,  \quad p (x,y) \geq 1 - \delta \quad \mbox{ and } \quad \forall x \leq -L, \ , \quad p^+_c (x,y) \leq \delta,$$
$$\forall x \geq L, \ \forall y \in \omega, \  \forall s \geq 1 -\delta, \ \ \partial_s f(x,y,s) < \frac{g_+ '(1)}{2}  <0 ,$$
$$ \forall x \leq -L, \  \forall y \in \omega, \ \forall s \leq \delta, \  \ \partial_s f(x,y,s) < \frac{ g_- ' (0)}{2} < 0.$$
Since $p^+_c (x,y)$ converges to 0 as $x \to -\infty$ uniformly with respect to~$y$, and using also \eqref{pinf}, clearly we can find $\overline{X}$ large enough so that the inequality~\eqref{p+cslide} holds strictly for any $(x,y) \in [-L,L] \times \omega$. 

Let us now prove that the inequality also holds when $x \geq L$, using a weak maximum principle. Indeed, on this part of the domain the function $q(x,y) = p^+_c (x-\overline{X},y) - p(x,y)$ satisfies
\begin{eqnarray*}
0& =& \Delta q + c \partial_x q +  \frac{f(x- \overline{X},y,p^+_c (x-\overline{X},y)) - f (x,y,p(x,y))}{q (x,y)} q \\
& \leq & \Delta q + c \partial_x q +  \frac{f(x,y,p^+_c (x-\overline{X},y)) - f (x,p(x,y))}{q (x,y)} q.
\end{eqnarray*}
Note that $  \frac{f(x,y,p^+_c (x-\overline{X},y)) - f (x,y,p(x,y))}{q (x,y)}$ can be extended by $\partial_s f(x,p)$ when $q= 0$ so that it is in $L^\infty (\Omega)$. Moreover, whenever $q >0$, then from our choice of $L$:
$$\frac{f(x,y,p^+_c (x-\overline{X},y)) - f (x,y,p(x,y))}{q (x,y)}  \leq \frac{g_+ ' (1)}{2} < 0.$$
Because $q (x=L) \leq 0$ and $\lim_{x \to +\infty} q (x,y) = 0$ uniformly with respect to $y$ (see Proposition~\ref{p+c_exists} and Claim~\ref{claim12}), it easily follows from 
the weak maximum principle that $q$ may not reach a positive value, hence~\eqref{p+cslide} holds for all $x \geq L$ and $y \in \omega$. The same argument as above applies when $x \leq -L$, so that we omit the details and~\eqref{p+cslide} is proved.

Now define 
$$X^*=\inf \{ X>0 \ | \  p^+_c(x - X,y)-p(x,y)\leq 0 \ \ \forall (x,y)\in\Omega \}.$$
By the continuity of $p^+_c$ and the definition of $X^*$, we know that $p^+_c (\cdot -  X^*,\cdot) - p(\cdot) \leq 0$ and that there exists $(x_n,y_n)_n$ such that 
\be\lim_{n\to+\infty} p^+_c (x_n- X^*,y_n)-p(x_n,y_n)=0.\ee
We either have $|x_n|<+\infty$ and it converges to some constant $x_\infty \in\R$ (up to a subsequence) or $|x_n|\to+\infty$. In the first case considering  $q_n(x,y)=p^+_c (x - X^*,y+y_n)-p(x, y + y_n)$, we know from parabolic estimates that $q_n\to q_\infty$ as $n\to+\infty$ where similarly as above $q_\infty $ is a subsolution of a linear elliptic equation with bounded coefficients. Moreover, by construction we have that $q_\infty \leq 0$ and $q (x_\infty,0)=0$. By the strong maximum principle 
we get that $q_\infty\equiv 0$ and thus
$$\lim_{n\to +\infty} p^+_c( \cdot - X^*,\cdot +y_n)= \lim_{n \to +\infty} p (\cdot, \cdot + y_n).$$
The right-hand side (resp. left-hand side)  is a stationary solution (resp. a subsolution) of Problem~\eqref{problemmoving} where $f$ is replaced by 
$$\hat{f} (\cdot) = \lim_{n \to +\infty} f (\cdot , \cdot +y_n,\cdot) ,$$
this limit being understood in the $L^\infty_{loc}$-weak star topology, up to extraction of another subsequence. This is only possible if $X^*=0$, otherwise the left-hand side does not satisfy the equation: indeed one can check that $\hat{f}$ still satisfies Hypotheses~A, and in particular it is monotonic with respect to the space variable~$x$ and converges to $g_\pm$ as $x \to \pm \infty$. Note that in the case when $y_n$ is bounded, in particular when $\omega$ is bounded, it is not needed to shift in the $y$ direction which makes this part of the argument simpler. Nevertheless, in the case when $|x_n|< +\infty$ we have proved that $p^+_c \leq p$, hence $p^+_c\equiv p$.

Next consider the case when $|x_n|\to+\infty$ as $n\to+\infty$. Assume by contradiction that $X^* >0$, and note that thanks to the above we know that 
$$\inf_{x \in [-L,L], y \in \omega} p_c^+ (x-X^*,y) - p(x,y) < 0.$$
By standard estimates, the first derivative of $p_c^+$ with respect to $x$ is uniformly bounded, so that there exists $\varepsilon>0$ small enough such that $p^+_c(x-(X^*-\varepsilon),y)-p(x,y)<0$ for all $x \in[-L,L]$  and $y \in \omega$. Proceeding as before, we can show by a weak maximum principle that $p^+_c(x-(X^* - \varepsilon),y) \leq p(x,y)$ for all $x \in \mathbb{R}$ and $y \in \omega$, which contradicts the definition of $X^*$. Again we conclude that $X^* \equiv0$, therefore $p^+_c\equiv p$. 
Claim~\ref{claim13} as well as Proposition~\ref{otherstates} is proved.
\end{proof}

Now using Lemma~\ref{lemma:maxprinciple} from the Appendix, we can reduce the number of steady states satisfying some specific exponential bound at $+\infty$. It will turn out in Section~\ref{sec:sharpthresholds} that the solution of equation~\eqref{problemmoving} either spreads or satisfies a similar exponential bound. This is why it is useful to study such steady states. Proposition~\ref{othercritical} below shows that these fast decaying (or ground) states cannot be ordered. If it weren't true, then some localized perturbation of a ground state would still be grounded (in the sense of Definition~\ref{def:dyn}). Thus, Proposition~\ref{othercritical} can formally be understood as a necessary condition for the sharp threshold phenomena which our main results exhibit.
\begin{Prop}\label{othercritical}
Assume that $c \geq 2 \sqrt{g_+'(0)}$. Then there does not exist $p_1 \leq \not \equiv p_2$ with $$\inf_{y \in \omega} p_2 (0,y) - p_1 (0,y) >0$$ and $p_1$, $p_2$ are two positive ground states, i.e. steady states of \eqref{problemmoving} satisfying the exponential estimates
$$o(e^{-\frac{c}{2} x})\quad  \left(\mbox{if }c > 2 \sqrt{g_+ '(0)} \, \right),$$
$$o((1+\sqrt{x}) e^{-\frac{c}{2} x})\quad \left(\mbox{if }c = 2 \sqrt{g_+ '(0)} \, \right),$$
as x $\to +\infty$ uniformly with respect to $y \in \omega$.

If $c \geq c^*$, then there exist no positive ground steady state $p$, i.e. satisfying the above exponential bound.
\end{Prop}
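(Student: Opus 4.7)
Both statements reduce to a linear comparison principle applied to a nonnegative function that vanishes at both spatial infinities. For the first statement, set $w := p_2 - p_1 \geq 0$; the difference satisfies
$$\Delta w + c\,\partial_x w + h(x,y)\,w = 0, \qquad h(x,y) := \int_0^1 \partial_s f\bigl(x,y,(1-t)p_1 + tp_2\bigr)\,dt,$$
with $h$ bounded and limits $h(x,\cdot) \to g_+'(0)$ as $x \to +\infty$ and $h(x,\cdot) \to g_-'(0) \leq -1$ as $x \to -\infty$. The standard substitution $v := e^{cx/2}w$ eliminates the drift, producing
$$\Delta v + \bigl(h(x,y) - c^2/4\bigr)\,v = 0.$$
The hypothesis $c \geq 2\sqrt{g_+'(0)}$ makes the zeroth-order coefficient $h - c^2/4$ nonpositive at $+\infty$ (and it is automatically strictly negative at $-\infty$). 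The ground-state decay $o(e^{-cx/2})$, or $o((1+\sqrt x)e^{-cx/2})$ in the degenerate case $c = 2\sqrt{g_+'(0)}$, together with the exponential decay of $p_1$ and $p_2$ at $-\infty$ at the rate given by the positive root of $\lambda^2 + c\lambda + g_-'(0) = 0$, yields $v \to 0$ at both ends. Since $\inf_y v(0,y) > 0$ by hypothesis, Lemma~\ref{lemma:maxprinciple} applied to $v$ produces the required contradiction.

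\textbf{Second statement.} For $c \geq c^*$, take $p_1 \equiv 0$ and $p_2 := p$, so that $h(x,y) = f(x,y,p)/p$ extended by $\partial_s f(x,y,0)$ where $p$ vanishes; the asymptotic properties of $h$ are unchanged. The strict positivity $\inf_y p(0,y) > 0$ needed as input to the previous argument follows from the strong maximum principle (and Hopf lemma) when $\omega$ is bounded, and from the subsolution/compactness scheme of Claim~\ref{claim11} when $\omega = \R^{N-1}$. The strengthened assumption $c \geq c^*$ contributes the extra ingredient that excludes ground states beyond the linearized obstruction: inserting under $p$ the ignition-type subsolution $\underline u$ built in the proof of Proposition~\ref{p+c_exists}, whose construction relies on a favourable travelling wave of the limit equation at $+\infty$, would by comparison force $\liminf_{x\to+\infty} p(x,y) \geq 1-\delta$, contradicting the ground-state decay assumption.

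\textbf{Main obstacle.} The principal technical difficulty is designing the barrier underlying Lemma~\ref{lemma:maxprinciple}: a positive supersolution of the transformed drift-free operator outside a compact $x$-interval, decaying at the precise rate allowed by the ground-state bound. The critical case $c = 2\sqrt{g_+'(0)}$ is the most delicate, since the characteristic polynomial $\lambda^2 + c\lambda + g_+'(0) = (\lambda + c/2)^2$ has a double root and the natural exponential $e^{-cx/2}$ is no longer a strict supersolution; it must be augmented by the $1+\sqrt x$ polynomial factor that appears in the hypothesized ground-state estimate. A secondary subtlety, in the unbounded cross-section case $\omega = \R^{N-1}$, is securing uniform positivity of $v(0,\cdot)$ in the $y$-direction, for which arguments analogous to the estimate~\eqref{other2} in Proposition~\ref{otherstates} are required.
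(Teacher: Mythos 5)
Both halves of your proposal have genuine gaps.

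\textbf{First statement.} Your plan is to derive a contradiction from the linear equation satisfied by $w=p_2-p_1\ge 0$ after the lift $v=e^{cx/2}w$. But the zeroth-order coefficient $h-c^2/4$ is only nonpositive \emph{near} $x=\pm\infty$; in the bounded transition region $h$ can exceed $c^2/4$ (Hypothesis~\ref{hypsPb}(1) gives monotonicity of $f$ in $x$, not of $\partial_s f$), and a nonnegative solution of $\Delta v+(h-c^2/4)v=0$ that vanishes at both infinities and is positive in the middle is then perfectly consistent --- a Dirichlet principal eigenfunction has exactly this structure. Nor can Lemma~\ref{lemma:maxprinciple} rescue this: it is a one-sided half-space comparison between a sub- and a supersolution of the nonlinear problem, and applied to the already ordered pair $p_1\le p_2$ it merely re-confirms the ordering. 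No contradiction is produced. The missing idea is a \emph{sliding} argument, which is where the monotonicity of $f$ in $x$ (never used in your proposal) enters: every right-translate $p_1(\cdot-\eta,\cdot)$ is a subsolution of \eqref{problemmoving}; the hypothesis $\inf_y(p_2-p_1)(0,\cdot)>0$ guarantees that the maximal admissible translation $\eta^*$ keeping $p_1(\cdot-\eta,\cdot)\le p_2$ is strictly positive (Lemma~\ref{lemma:maxprinciple} and the negativity of $\partial_s f$ near $0$ far to the left are used to propagate the small translation from a compact $x$-interval to the two half-spaces), while positivity of $p_1$ and decay of $p_2$ make $\eta^*$ finite; at $\eta^*$ a touching-point/strong maximum principle analysis forces $p_1(\cdot-\eta^*,\cdot)\equiv p_2$, which is incompatible with the genuine $x$-dependence of $f$ and $\eta^*>0$. (Degenerate cases where the touching sequence escapes in $x$ or $y$ require the extra $\varepsilon$-slide, respectively the hypothesis $\inf_y p_2(0,\cdot)>0$ to exclude a trivial limit.)

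\textbf{Second statement.} Your extra ingredient for $c\ge c^*$ --- inserting the ignition-type subsolution of Proposition~\ref{p+c_exists} under $p$ --- cannot be carried out: that subsolution equals $V_*(-x+X_\delta)$ and tends to $1-\delta$ as $x\to+\infty$, so it can never be placed below a ground state $p$ that decays to $0$ as $x\to+\infty$; the comparison you invoke is vacuous rather than contradictory. (The compactly supported subsolutions of Section~\ref{sec:interm} are also unavailable, since they require $c<c^*$.) The correct mechanism uses the phase plane of the limiting ODE $v''+cv'+g_+(v)=0$: for $c\ge 2\sqrt{g_+'(0)}$ there is a unique extremal decreasing solution with the fast decay $v(x)\sim Ae^{-\frac{c+\sqrt{c^2-4g_+'(0)}}{2}x}$, and precisely when $c\ge c^*$ this solution either coincides with the minimal travelling wave (so $v(-\infty)=1$) or blows up as $x\to-\infty$; in either case one can shift $v$ above $p$ on a left half-line, take $\overline\phi=\min\{v,p\}$ as a generalised supersolution, apply Lemma~\ref{lemma:maxprinciple} with $\underline\phi=p$ to conclude $p\le v$ everywhere, and then use that $p$ is a subsolution of the homogeneous ODE to force $p\equiv v$ at a touching point --- absurd, since $p\le p_c^+$ decays at $-\infty$. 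Without this ODE input your argument does not explain why no single ground state can exist when $c\ge c^*$ while ground states do exist for $c\in[2\sqrt{g_+'(0)},c^*)$.
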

We immediately note that, thanks to the strong maximum principle, the condition $\inf_{y \in \omega} p_2 (0,y) - p_1 (0,y) >0$ can be removed when $\omega$ is bounded. The second part of Proposition~\ref{othercritical} will also be useful in order to study the large time behaviour of solutions of \eqref{problemmoving} when $c = c^*$, regardless of the choice of the cross section~$\omega$.
\begin{proof}
We prove the first statement and consider $p_1 \leq p_2$ to be two such steady states. It is straightforward to prove that, for any $X >0$, then
$$\inf_{x \in [-X,X], y \in \omega} p_2 (x,y) - p_1 (x,y) >0.$$
In particular, one can find some small $\eta >0$ such that
$$\forall (x,y) \in [-X,X] \times \omega, \quad p_1 (x -\eta, y) \leq p_2 (x,y).$$
Choosing $X$ large enough and since any shift to the right of $p_1$ is a subsolution of \eqref{problemmoving}, it follows from Lemma~\ref{lemma:maxprinciple} that $p_1 (x- \eta,y) \leq p_2 (x,y)$ for any $x \geq X$ and $y \in \omega$. In a similar fashion, since any steady state converges to 0 as $x\to -\infty$ uniformly with respect to $y$ (recall that $p^+_c$ lies above any other stationary state) and since $f$ has a negative derivative with respect to $s$ far to the left, one can also show that $p_1 (x- \eta,y) \leq p_2 (x,y)$ for any $x \leq -X$ and $y \in \omega$. Indeed, if we assume that there exists $(x_0,y_0)\in\Omega$ such that $x_0<-X$ and $p_1(x_0-\eta,y_0)-p_2(x_0,y_0)>0$, we can assume without loss of generality that it achieves a positive maximum (up to convergence via a subsequence if needed) and we reach a contradiction.

Now define
$$\eta^* := \max \{ \eta > 0 \ | \ p_1 (\cdot - \eta, \cdot) \leq p_2 (\cdot) \}.$$
It is clear that $\eta^*$ is finite as $p_1$ is positive and $p_2$ decays to 0 as $x \to +\infty$. Then there exists some sequence $(x_n,y_n)$ such that $p_1 (x_n  - \eta^*,y_n) - p_2 (x_n,y_n) \to 0$ as $n \to +\infty$. If this sequence is bounded, then we get some $(x_\infty,y_\infty) \in \overline{\Omega}$ such that $p_1 (x_\infty - \eta^* , y_\infty) = p_2 (x_\infty, y_\infty)$, which by the maximum principle implies $0< p_1 (\cdot - \eta^*,\cdot) \equiv p_2 (\cdot)$. Together with the monotonicity of $f$ in the space variable and the fact that $\eta^* >0$, this is a contradiction.

Next, assume that the sequence $x_n$ is bounded but not $y_n$ (in the case $\omega=\R^{N-1}$). Thanks to the boundedness of~$f$ and parabolic estimates, one can define the functions $p_{1,\infty} = \lim_n p_1 (\cdot  - \eta^*,\cdot + y_n)$ and $p_{2,\infty} = \lim_n p_2 (\cdot , \cdot + y_n)$ and check that
$$\Delta p_{1,\infty} + c\partial_x p_{1,\infty} + \hat{f} (x-\eta^*,y,p_{1,\infty})=0,$$
$$\Delta p_{1,\infty} + c\partial_x p_{2,\infty} + \hat{f} (x-,y,p_{2,\infty})=0,$$
where $\hat{f}$ is the limit as $n \to +\infty$ of $f(\cdot,\cdot + y_n,\cdot)$ in the $L^\infty_{loc}$-weak star topology. Now by the strong maximum principle we get that $p_{1,\infty} \equiv p_{2,\infty}$. Note that here, the monotonicity of $f$ (hence of $\hat{f}$) and the positivity of~$\eta^*$ does not immediately give a contradiction, as a priori $p_{1,\infty} \equiv p_{2,\infty} \equiv 0$ may occur. However, this is ruled out from our assumption that $\inf p_2 (0,\cdot) >0$, thus $p_{2,\infty}$ cannot be trivial.

It remains to consider the case when $|x _n | \to +\infty$ and $\inf_{x \in [-X,X], y \in \omega} p_2 (x,y) - p_1 (x - \eta^*,y) >0$ for any $X>0$. Choose $\varepsilon >0$ small enough so that 
$$\inf_{y \in \omega} p_2 (X,y) - p_1 (X-\eta^* - \varepsilon, y) >0.$$
Using Lemma~\ref{lemma:maxprinciple}, we get that 
$$p_1 (\cdot - \eta^* - \varepsilon,\cdot) < p_2 (\cdot) \quad \mbox{ in $\{ x \geq X\} \times \omega$}.$$
On the other hand, $p_1$ and $p_2$ both converge to 0 as $x \to -\infty$ (they are bounded from above by $p_c^+$), and $\lim_{x \to -\infty} \partial_s f(x,s) = g_- ' (s)$ which is negative in a neighborhood of $0$. Therefore, up to increasing $X$, the usual weak maximum principle can be used to infer also that 
$$p_1 (\cdot - \eta^* - \varepsilon,\cdot) < p_2 (\cdot) \quad \mbox{ in $\{ x \leq - X\} \times \omega $}.$$
Finally we contradicted the definition of $\eta^*$. Such a pair of steady states does not exist.\medskip

Let us now prove the second statement. Assume that $c\geq c^*$ and, proceeding by contradiction, let $p$ be a positive steady state satisfying the appropriate exponential bound. From standard phase plane analysis, it is well-known (see Aronson and Weinberger \cite{AW}, Prop 4.1 and Prop 4.4) that the equation
\begin{equation}\label{homhom}
v'' + c v' + g_+ (v) = 0
\end{equation}
admits a unique (up to shift) decreasing stationary solution $v$ decaying to 0 as $x \to +\infty$ and whose associated trajectory in the phase plane is extremal, in the sense that any other trajectory going through $(0,0)$ must lie above it. Moreover, it is also known by standard ODE technics~\cite{cl} that $v$ has the asymptotics
$$v (x) \sim A e^{-\displaystyle \frac{c + \sqrt{c^2 - 4 g_+ ' (0)}}{2}\, x},$$
where $A>0$. By construction, the trajectory associated with~$v$ must be below the trajectory associated with the travelling wave with speed $c$. Thus the solution $v$ may either coincide with this travelling wave (this can only occur if $c = c^*$), in which case $v (-\infty)= 1$, or satisfy $v (-\infty) = +\infty$. 

Let us now note that the real number $X$ in Lemma~\ref{lemma:maxprinciple} only depends on the function $f$, and on the decay of the sub and supersolutions $\underline{\phi}$, $\overline{\phi}$ to $0$ as $x \to +\infty$. Indeed, this can easily be seen from the proof of Lemma~\ref{lemma:maxprinciple}. In particular, we can choose $X$ so that Lemma~\ref{lemma:maxprinciple} can be applied for any supersolution $\overline{\phi} \leq p$ and subsolution $\underline{\phi} \leq p$.

From the exponential bound on $p$ and the fact that $p \leq p^+_c$, it is straightforward that $\|p\|_\infty < 1$. Then, we can shift the function $v$ without loss of generality so that
$$v(x) \geq p (x,y)$$
for all $x \leq X$ and $y \in \omega$, as well as, using the fact that $p >0$,
\begin{equation}\label{infimum1}
\inf_{x \leq X, y \in \omega}  (v-p) =0.
\end{equation}
We now define
$$\overline{\phi} := \min \{ v,p\} \leq p,$$
which from the above satisfies that 
$$\overline{\phi} (x,y) = p(x,y)$$
for all $x \leq X$ and $y \in \omega$. Clearly $\overline{\phi}$ is a (generalised) supersolution of~\eqref{problemmoving}. Letting also $\underline{\phi} \equiv p$ and applying Lemma~\ref{lemma:maxprinciple}, we conclude that
$$\overline{\phi} \equiv p.$$
In other words, $p \leq v$ in the whole domain. 

However, $p$ is a subsolution of~\eqref{homhom}. Therefore, if the infimum in~\eqref{infimum1} is reached at some point $(x_0,y_0) \in \Omega$, then the strong maximum principle implies that $p\equiv v$, which is clearly impossible. Up to some appropriate shift, the same argument applies even if the infimum in~\eqref{infimum1} is not reached. Indeed, let a sequence $(x_n,y_n)_n$ be such that $v(x_n,y_n) - p (x_n,y_n) \to 0$ as $n \to +\infty$. Due to the asymptotics of $v$ and $p$ as $x \to -\infty$, we have that $x_n$ must remain bounded. By standard parabolic estimates, the sequence $p(\cdot, \cdot +y_n)$ converges and, by the strong maximum principle, the limit must be identically equal to~$v$. This is again an obvious contradiction. We conclude that such a positive steady state~$p$ does not exist. The proposition is proved.
\end{proof}

\section{Large time behaviour: three speed ranges}\label{sec:3speed}
	
In this section, we prove Theorem \ref{th:regime} and highlight the existence of three different regimes for the speed $c$ which will determine the large time behaviour of the solutions of~\eqref{problemmoving}-\eqref{initcond}.

For convenience, we start with a preliminary lemma which shows that as time goes to infinity any bounded solution of \eqref{problemmoving} must lie below the maximal steady steate $p^+_c$, which we defined rigorously in Section~\ref{sec:classStat}.
\begin{Lemma}\label{pratique}
Let $u_0$ be a bounded nonnegative initial datum, and $u$ be the corresponding solution of \eqref{problemmoving}. Then 
$$\lim_{t \to +\infty} \sup_{x \in \mathbb{R}, y \in \omega} \left( u(t,x,y) - p^+_c (x,y)\right) =0.$$
\end{Lemma}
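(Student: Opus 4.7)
The plan is to reduce the statement to showing that $u^M - p_c^+ \to 0$ uniformly on $\Omega$, where $u^M$ solves \eqref{problemmoving} with constant initial datum $M := \max(1, \|u_0\|_\infty)$. The parabolic comparison principle gives $u \leq u^M$, so the upper bound $\sup_{x, y} (u - p_c^+) \leq \sup (u^M - p_c^+)$ follows, while the matching lower bound $\sup_{x, y} (u - p_c^+) \geq 0$ for every $t$ is immediate from $u \geq 0$ together with $p_c^+(x, y) \to 0$ as $x \to -\infty$. The proof of Proposition~\ref{p+c_exists} shows that $u^M$ is nonincreasing in $t$ and converges locally uniformly to $p_c^+$, and since $p_c^+ \leq 1 \leq M$ is a stationary solution, also $u^M \geq p_c^+$. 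Everything thus boils down to upgrading the locally uniform convergence $u^M \to p_c^+$ to a fully uniform one.

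I would argue by contradiction: suppose there exist $\epsilon > 0$, $t_n \to \infty$ and $(x_n, y_n) \in \Omega$ with $u^M(t_n, x_n, y_n) - p_c^+(x_n, y_n) \geq \epsilon$. Locally uniform convergence forces $(x_n, y_n)$ to escape every compact subset of $\Omega$, and after extraction one of the following holds: (a) $x_n \to -\infty$; (b) $x_n \to +\infty$; (c) $x_n$ remains bounded while $|y_n| \to \infty$ (only possible when $\omega = \mathbb{R}^{N-1}$). In each case, I set $w_n(s, x, y) := u^M(s + t_n, x + x_n, y + y_n)$ (with no $y$-shift when $\omega$ is bounded); by interior parabolic estimates $w_n$ converges along a subsequence to a global in time limit $w_\infty$ which is nonincreasing in $s$, whose monotone limits $w^\pm := \lim_{s \to \pm \infty} w_\infty$ are bounded nonnegative stationary solutions of the appropriate limit elliptic equation.

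In case (a) the limit reaction is $g_-$ with $g_-(s) \leq -s$, so evaluating the equation at an attained (possibly after a further shift) supremum of $w^\pm$ forces $w^\pm \equiv 0$ and hence $w_\infty \equiv 0$, contradicting $w_\infty(0, 0, 0) \geq \epsilon$. In case (b) the limit reaction is $g_+$, and the same maximum principle argument at the supremum of $w^\pm$ gives $\sup w^\pm \leq 1$; this contradicts $w_\infty(0, 0, 0) \geq \epsilon + \lim_n p_c^+(x_n, y_n) = \epsilon + 1$, where the uniform-in-$y$ limit $p_c^+(x, \cdot) \to 1$ from Proposition~\ref{p+c_exists} is used. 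The main obstacle I expect is case (c), where the limit equation retains an $x$-dependent reaction $\hat{f}(x, y, w) := \lim_n f(x + x_n, y + y_n, w)$, extracted in the weak-$\star$ $L^\infty_{\mathrm{loc}}$ topology. The uniform-in-$y$ convergence of $f$ to $g_\pm$ at $x = \pm \infty$ stipulated in Hypotheses~\ref{hypsPb} ensures that $\hat{f}$ still satisfies those hypotheses, so Propositions~\ref{p+c_exists} and~\ref{otherstates} apply to the limit problem and provide a maximal steady state $\hat{p}_c^+$. Elliptic estimates show that the shifts $p_c^+(x + x_n, y + y_n)$ converge to a bounded stationary solution $\hat{p}$ of the limit problem with the expected limits $0$ and $1$ at $x = \mp \infty$ uniformly in $y$, and Proposition~\ref{otherstates} identifies $\hat{p} = \hat{p}_c^+$. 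Passing to the limit in $u^M \geq p_c^+$ yields $w_\infty \geq \hat{p}_c^+$, whereas the maximality of $\hat{p}_c^+$ for the limit problem forces $w^\pm \leq \hat{p}_c^+$; combined with the monotonicity of $w_\infty$ in $s$, this gives $w_\infty \equiv \hat{p}_c^+$, contradicting $w_\infty(0, 0, 0) \geq \epsilon + \hat{p}_c^+(0, 0)$.
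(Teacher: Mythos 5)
Your proof is correct and follows essentially the same strategy as the paper's: establish the locally uniform upper bound via the monotone solution $u^M$, note the lower bound from $p_c^+(-\infty,\cdot)=0$, and then argue by contradiction with a translated limit, splitting into the cases $x_n\to\pm\infty$ and $x_n$ bounded with the reaction term replaced by $g_\pm$ or a weak-$\star$ limit $\hat f$ satisfying the same hypotheses. The only (harmless) difference is that you carry $u^M$ through the whole contradiction argument, which gives you monotonicity in time of the limit $w_\infty$, whereas the paper switches back to $u$ itself after the locally uniform step.
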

\begin{proof}According to the proof of Proposition~\ref{p+c_exists}, for any $M> \max \{1, \|u_0\|_\infty\}$, the solution of \eqref{problemmoving} with initial datum $M$ is decreasing with respect to time and converges locally uniformly to $p^+_c$ as $t \to +\infty$. It immediately follows by comparison that, for any compact set $K \subset \overline{\Omega}$,
\begin{equation}\label{pratique2}
\limsup_{t\to+\infty} \sup_{(x,y) \in K} \left( u(t,x,y) - p^+_c (x,y) \right) \leq 0.
\end{equation}
We need to improve this estimate and prove that this inequality still holds when replacing $K$ by $\Omega$. 

Proceed by contradiction and assume that there exist some $\varepsilon>0$ and some sequences $t_n \to +\infty$ and $(x_n,y_n) \in \Omega$ such that
\begin{equation}\label{pratique1}
u(t_n,x_n,y_n) \geq p^+_c (x_n,y_n) + \varepsilon.
\end{equation}
Assume first that $(x_n)_{n }$ is bounded. Up to extraction of a subsequence, $u(t_n+ t,x_n+x,y_n+y)$ and $p^+_c (x_n+x,y_n+y)$ converge to $u_\infty$ and $p_\infty$ respectively an entire and a stationary solution of \eqref{problemmoving}, where $f$ and $\omega$ are replaced by some $\hat{f}$ and $\hat{\omega}$ which satisfy the same assumptions. Moreover, it follows from Proposition~\ref{p+c_exists} that $p_\infty (+\infty, \cdot) =1$, hence it is a maximal steady state. By \eqref{pratique1} we also get that $u(0,0,0) \geq p_\infty (0,0) +\varepsilon$ and, repeating the argument above, we reach a contradiction.

Next, assume that $x_n \to + \infty$. We again extract a converging subsequence from $u(t_n+t,x_n+x,y_n+y)$ and obtain in the limit an entire solution $u_\infty$ of the monostable equation
$$\partial_t u_\infty = \Delta u_\infty + c u_\infty + g_+ (u_\infty)$$
satisfying $u_\infty (0,0,0) > 1$ because of \eqref{pratique1}. This is also a contradiction.

Similarly, if $x_n \to -\infty$ one gets a positive and bounded entire solution $u_\infty$ of the equation
$$\partial_t u_\infty = \Delta u_\infty + cu_\infty + g_- (u_\infty).$$
We have reached a contradiction in all cases, thus \eqref{pratique2} holds when the set $K$ is replaced by the whole domain~$\Omega$. Finally, since $p^+_c \to 0$ as $x \to -\infty$, it is clear that $\sup_{(x,y)\in \Omega} u(t,x,y) - p^+_c (x,y) \geq 0$ for all $t >0$, and therefore Lemma~\ref{pratique} is proved.\end{proof} 

\subsection{Spreading when $c<2\sqrt{g_+ '(0)}$}\label{sec:low}

In this first speed range, we construct an arbitrarily small compactly supported subsolution of Problem~\eqref{problemmoving}, from which spreading will follow for any nontrivial initial datum. Knowing that $c<2\sqrt{g_+ '(0)}$, there exist $\delta>0$ small and $X_\delta>0$ large enough such that 
$$f (x,y,s) \geq g_\delta (s)  \quad \forall \: x \geq X_\delta,\: y\in\omega, \: s\in\R ,$$
$$\partial_s f (x,y,s) \geq g'_\delta (0)  \quad \forall \: x \geq X_\delta,\:y\in\omega, \: s\in [0,\delta] ,$$
where $g_\delta$ is a monostable function whose zeros are 0 and $1-\delta$, and such that $c<2\sqrt{g_\delta'(0)}$. In the case when $\omega = \R^{N-1}$, we consider the principal eigenvalue problem
\be\label{pb:Dirichleteigen}\begin{cases}
\Delta \phi_{R,c} + c\partial_{x}\phi_{R,c} + g_\delta'(0)\phi_{R,c} =\Lambda_{R,c}\phi_{R,c} &\text{in } B_R,\\
\phi_{R,c} >0 &\text{in }B_R,\\
\phi_{R,c} = 0 &\text{on } \partial B_R.
\end{cases}\ee
where $R>0$ and $B_R$ denotes the ball of radius $R$ centered at 0. 
\begin{Rk}
For the case $\omega$ bounded we instead consider the principal eigenvalue problem
\be\label{pb:Dirichleteigen_bdd}\begin{cases}
 \phi_{R,c} ''  + c\partial_{x}\phi_{R,c} + g_\delta'(0)\phi_{R,c} =\Lambda_{R,c}\phi_{R,c} &\text{in }(-R,R),\\
\phi_{R,c} >0 &\text{in } (-R,R),\\
\phi_{R,c} = 0 &\text{on } \{-R,R\}.
\end{cases}\ee
Then the proof below applies to the letter.\end{Rk}
Computing that $\psi = e^{\frac{c}{2} x} \phi_{R,c}$ satisfies
$$\Delta \psi = \left( \Lambda_{R,c} + \frac{c^2}{4} - g'_\delta (0) \right) \psi$$
one can deduce that $\Lambda_{R,c} >0$ for $R$ large enough. Then, extending $\phi_{R,c}$ by 0 outside of the ball, one can check that for any $\kappa>0$ small enough, the function $\kappa \phi_{R,c} (x - X_\delta -R,y)$ is a subsolution of \eqref{problemmoving}. In particular, the solution $\underline{u}$ of~\eqref{problemmoving} with initial datum $\underline{u} (t=0) = \kappa \phi_{R,c}$ is increasing in time. As it is also bounded by some constant $M>1$, it converges locally uniformly to some positive and bounded stationary state~$p$ as $t\to+\infty$. 

Let us prove that $p \equiv p^+_c$. By Proposition~\ref{otherstates} it is enough to show that $p$ does not converge to 0 as $x \to +\infty$ locally uniformly with respect to~$y$. We proceed by contradiction. If $p$ converges to 0, and since by construction $p \geq \kappa \phi_{R,c}$, clearly there exists some spatial shift $X>0$ such that
$$p(x,y) \geq \kappa \phi_{R,c} (x- X,y)$$
for all $(x,y) \in \R^{N}$, and the equality holds at some point $(x_0,y_0)$. Here we also used the fact that $\phi_{R,c}$ is compactly supported. As any shift to the right of $\kappa \phi_{R,c}$ is again a subsolution of~\eqref{problemmoving}, a strong maximum principle implies that $p (\cdot) \equiv \kappa \phi_{R,c} (\cdot -X,\cdot)$, which of course is impossible. We conclude that $p \equiv p^+_c$.

\begin{Rk}
We note here that the argument in the last paragraph also proves that 0 and $p^+_c$ are the only two nonnegative and bounded steady states of \eqref{problemmoving}, as was announced in the previous section.
\end{Rk}

Finally, for any nonnegative and nontrivial initial datum~$u_0$, by the parabolic strong maximum principle we have that $u(1,\cdot) >0$. In particular, one can choose $\kappa$ small enough so that $u(1,\cdot) \geq \kappa \phi_{R,c}$, and it easily follows by comparison that 
for any compact set $K \subset \overline{\Omega}$, one has
$$\liminf_{t \to +\infty} \inf_{(x,y) \in K} u(t,x,y) - p^+_c (x,y) \geq 0.$$
Putting this together with Lemma~\ref{pratique}, we conclude that $u(t,\cdot)$ converges locally uniformly to $p^+_c$ as $t \to +\infty$, i.e. spreading occurs.

\subsection{Vanishing and spreading when $c\in[2\sqrt{g_+ '(0)},c^*)$}\label{sec:interm}

We begin by proving that spreading can occur in this intermediate speed range. As in the previous subsection, we let some small $\delta >0$ and large $X_\delta >0$ be such that, for all $x \geq X_\delta$, $y\in\omega$ and $s \geq 0$, one has $f(x,y,s) \geq g_\delta (s)$ where $g_\delta$ is a monostable type function whose zeros are 0 and $1-\delta$, and whose minimal wave speed $c^*_\delta$ is close enough to $c^*$ so that $c^*_\delta >c$.

Then it is well known that there exists a compactly supported subsolution $v^\delta$ of
$$ \Delta v^\delta + c \partial_{x} v^\delta + g_\delta (v^\delta) \geq 0.$$
Such a subsolution can be constructed by phase plane analysis, as we do below. The subsolution we use here is adapted to our situation where the favourable zone is receding as time increases; however it is quite similar to the one in~\cite{AW} and therefore we omit some details. Let $\rho$ and $c_1$ be two positive constants such that $c+(n-1)/\rho<c_1<c^*_\delta$ and $q$ be the solution of 
$$q '' + c_1 q' + g_\delta (q)=0, \: q(0)=\eta<1-\delta,\:q'(0)=0.$$
Since $c_1<c^*_\delta$, we know from~\cite{AW} that if $\eta$ is close enough to $1-\delta$, then there exists $b>0$ such that $q(b)=0$ and $q>0$, $q'<0$ in $(0,b)$.
We then define
\begin{equation*}
v^\delta(x,y)=\begin{cases}
\eta &\forall \: ||(x,y)||<\rho\\
q(||(x,y)||-\rho) &\forall\: ||(x,y)||\in[\rho,\rho+b],\\
0 &\forall \: ||(x,y)||>\rho+b.
\end{cases}\end{equation*}
for all $(x,y)=\R^{N}$. One can check that, for any $X>0$ large enough, the function $v^\delta (x - X, y)$
is a subsolution of \eqref{problemmoving}. 
\begin{Rk}\label{rk:yshift}
For the case $\omega$ bounded then replace $q (\|(x,y)\|-\rho)$ by $q(|x|-\rho)$ in the definition of $v^\delta$ above, so that $v^\delta$ no longer depends on~$y$ and clearly satisfies the Neumann boundary condition.

Note also that in the case $\omega = \R^{N-1}$, any shift of $v^\delta$ in the $y$-direction is also a subsolution of~\eqref{problemmoving}. This will prove useful later on.
\end{Rk} 
It follows from the parabolic maximum principle that the solution with initial datum $v^\delta (\cdot-X,\cdot)$ is increasing in time. It is also uniformly bounded by the constant $\max \{ \|v^\delta\|_\infty, 1 \}$ and therefore, by standard parabolic estimates it converges locally uniformly to some steady state $p$. Let us prove that $p \equiv p^+_c$. If not, then $p$ converges as $x \to +\infty$ to 0 locally uniformly with respect to $y$, due to Proposition~\ref{otherstates}. Since $v^\delta (\cdot - X,\cdot)< p (\cdot)$ by construction, it is straightforward that there exists 
$$X^* = \sup \{ X' \geq X \ |\ v^\delta (\cdot - X' ,\cdot) < p (\cdot) \} < +\infty.$$
By continuity and the fact that $v^\delta$ is compactly supported, we get that $v^\delta (\cdot - X^*,\cdot) \leq p(\cdot)$, and the equality holds for some $(x_0,y_0) \in \Omega$. Using the strong maximum principle, we reach a contradiction, hence $p \equiv p^+_c$. We have proved that spreading occurs for the particular initial datum $v^\delta$. By comparison and thanks to Lemma~\ref{pratique}, it easily follows that spreading also occurs for any $u_0 \geq v^\delta$.\\

We next turn to the vanishing scenario and construct a supersolution.
Assuming first that $c>2\sqrt{g_+'(0)}$, we define the function $v(x)=e^{-(\frac{c}{2}+\delta)x}$. Then 
\begin{align*}
\displaystyle \partial_t v-\partial_{xx}v-c\partial_xv-f(x,y,v)&=\displaystyle \left(\frac{c^2}{4}-\delta^2 \right) v -f(x,y,v)\\
&\geq \left(\frac{c^2}{4}-\delta^2 \right) v -g_+(v)\\
&= \left(\frac{c^2}{4}-\delta^2+g'_+(s)\right)v,
\end{align*}
for some $s\in(0,v)$. Up to increasing $X$ we know that $v(x)\leq\varepsilon$ for all $x\geq X$, thus choosing $\delta>0$ small enough such that $\frac{c^2}{4}-\delta^2>g'_+(s)$ for all $s\in(0,\varepsilon)$, we conclude that $v$ is a supersolution of \eqref{problemmoving} in $[X,+\infty) \times \omega$.

When $c=2\sqrt{g'_+(0)}$, we define $v(x)=(1+x^\beta)e^{-\frac{c}{2}x}$, where $0<\beta<1$. Then we can choose $X$ large enough so that
$$\partial_x v (X) < 0$$
and 
\begin{align*}
\partial_t v-\partial_{xx} v -c\partial_xv-f(x,y,v)&=-\beta(\beta-1)x^{\beta-2}e^{-\frac{c}{2}x}+ \frac{c^2}{4}v-f(x,y,v)\\
&\geq -\beta(\beta-1)x^{\beta-2}e^{-\frac{c}{2}x}+(g'_+(0)-g_+'(s))v\\
&\geq -\beta(\beta-1)x^{\beta-2}e^{-\frac{c}{2}x}-C_rv^{1+r}\\
&\geq 0,
\end{align*}
in $[X,+\infty) \times \omega$. As above, in the first inequality $s \in (0,v)$ depends on the value of $v$ at the corresponding point. In the second inequality, the constant $C_r$ comes from the $C^{1,r}$ regularity of $g_+$ around~0, and from the fact that $v (x) \to 0$ as $x \to +\infty$.

Now in both cases, choosing $A = v(X)$, the function
$$\overline{u}_0 (x)= \left\{ 
\begin{array}{ll}
A & \mbox{if } x \leq - X,\vspace{3pt}\\
v(x+2X) & \mbox{if } x \geq -X,
\end{array}
\right.
$$
is a generalised supersolution of \eqref{problemmoving} with Neumann boundary condition when $\Omega$ has a boundary, up to increasing $X$ again and thanks to the fact that $f$ is negative far to the left. Therefore the solution $\overline{u}$ of \eqref{problemmoving} with initial datum $\overline{u}_0$ is nonincreasing in time and, by standard parabolic estimates, it converges locally uniformly to some nonnegative stationary state~$p$ as $t\to+\infty$. 

Let us check that $p \equiv 0$. We follow the same argument as in the proof of Proposition~\ref{othercritical} (choosing $\beta<1/2$ when $c=2\sqrt{g_+'(0)}$). The main idea here is to observe that the first conclusion of Proposition~\ref{othercritical} still holds when $p_2$ is no longer a steady state but a stationary supersolution of \eqref{problemmoving}. Note first that $\overline{u}_0$ satisfies the appropriate exponential bound and does not depend on $y \in \omega$, and therefore $p$ is a ground state of \eqref{problemmoving} in the sense we introduced in Proposition~\ref{othercritical}. Proceed by contradiction and assume that $p$ is not identically equal to~0. Since $p \leq \overline{u}_0$, then there exists
$$\eta^* = \max \{ \eta \geq 0 \ | \ p (\cdot- \eta,\cdot) \leq \overline{u}_0 (\cdot) \}.$$
Because $\overline{u}_0$ is a supersolution of \eqref{problemmoving} and satisfies, as we mentioned above, the same exponential bound as in Proposition~\ref{othercritical}, the strong maximum principle and Lemma~\ref{lemma:maxprinciple} lead to a contradiction. 

It only remains to show that the convergence of $\overline{u}$ to $0$ as $t \to +\infty$ is also uniform with respect to the space variables. Assume that this is not true, so that there exist $\varepsilon >0$ and some sequences $t_n \to +\infty$ and $(x_n,y_n) \in \Omega$ such that
$$\overline{u} (t_n,x_n,y_n) \geq \varepsilon$$
for all $n \in \mathbb{N}$. Moreover, the sequence $(x_n)_n$ must be bounded. This follows on one hand from the fact that $\overline{u}_0 (x)$ converges to 0 as $x \to +\infty$, and on the other hand from Lemma~\ref{pratique} where $p^+_c (x,y) \to 0$ as $x \to -\infty$. When $\omega$ is bounded, we have already reached a contradiction since we have shown that $\overline{u}$ converges locally uniformly in space to $0$ as $t \to +\infty$. When $\omega =\R^{N-1}$, then up to extraction of a subsequence, we have that $x_n \to x_\infty \in \mathbb{R}$ and $\overline{u} (t+t_n,x,y+y_n)$ converges to an entire solution $u_\infty$ of some equation
\begin{equation}\label{eq:shift0}
\partial_t u_\infty = \Delta u + c\partial_x u_\infty + \hat{f}(x,y,u_\infty)
\end{equation}
where $\hat{f}$ satisfies Hypotheses~A. Note that to obtain~\eqref{eq:shift0}, one must appropriately shift the equation~\eqref{problemmoving} and pass to the limit first in the $L^\infty_{loc}$-weak star topology. In any case, we then repeat the same argument as above. First, $u_\infty$ still lies below $\overline{u}_0$. Because $\overline{u}_0$ is also a supersolution of this shifted equation, one gets that $u_\infty \leq \hat{p}$ where $\hat{p}$ is a (possibly trivial) ground state of equation~\eqref{eq:shift0}. Using again Lemma~\ref{lemma:maxprinciple}, one can infer that $\hat{p} \equiv 0$, which contradicts the fact that $u_\infty (0,x_\infty,0) \geq \varepsilon$. We conclude that vanishing occurs for the initial datum $\overline{u}_0$. This immediately extends to any initial datum below $\overline{u}_0$.

\subsection{Vanishing when $c \geq c^*$}\label{sec:high}

We now deal with the higher speed range $[c^*,+\infty)$. First assume that $c > c^*$, where the proof naturally follows from the fact that the solution $v(t,x,y)$ of the homogeneous monostable problem
$$\partial_tv-\Delta v=g_+(v)$$
with initial datum $u_0$ spreads with speed $c^*$. More precisely, it is known that for any $X \in \mathbb{R}$,
$$\limsup_{t \to +\infty} \sup_{x \geq X, y \in \omega} v(t,x+ct,y) = 0,$$ 
This was proved in the celebrated work of Aronson and Weinberger~\cite{AW} when the initial datum is less than 1. Nevertheless, it is standard to extend this result to general initial data when $g_+ (s) < 0$ for all $s >1$, using for instance a supersolution of the type of \eqref{eq:supsol11} below. 

Since $f (x,y,s) \leq g_+ (s)$ for all $x \in \mathbb{R}$, $y\in\omega$ and $s \geq 0$, it immediately follows by comparison that the solution $u$ converges to 0 as $t \to +\infty$, uniformly in any set $\{x \geq -X \} \times \omega$. Putting this together with Lemma~\ref{pratique} and the fact that $p^+_c (x,y) \to 0$ as $x\to -\infty$ uniformly with respect to $y \in \omega$, we conclude that $u$ converges uniformly to 0 in large time, i.e. it vanishes in the sense of Definition~\ref{def:dyn}.\\

The case $c = c^*$ is slightly more complicated since the solution $v$ of the monostable homogeneous equation may actually persist in the moving frame with speed $c^*$ when $c^* > 2 \sqrt{g_+'(0)}$~(see~\cite{rothe}). However vanishing again occurs for solutions of \eqref{problemmoving}, which follows from Proposition~\ref{othercritical} as we will see below.

As we mentioned in Section \ref{sec:classStat}, for any $c \geq 2 \sqrt{g_+ '(0)}$, the monostable equation 
\begin{equation}\label{eq:monob}
v'' + cv' + g_+ (v) = 0
\end{equation}
admits a unique (up to shift) solution behaving as
$$v(x) \sim A e^{\displaystyle -\frac{c + \sqrt{c^2 - 4 g_+ ' (0)}}{2} \, x}$$
as $x \to +\infty$, where $A>0$. Moreover, since $c = c^*$, there exists a travelling wave connecting 1 to 0, and the trajectory in the phase plane of~\eqref{eq:monob} associated with this travelling wave must be above the trajectory associated with~$v$. In other words, either $v$ coincides with the travelling wave with minimal speed, or $v$ converges to $+\infty$ as $x \to -\infty$.

In the latter case, we consider $\overline{u}$ the solution of \eqref{problemmoving} with initial datum 
$$ \overline{u}_0  = \min \{ \|u_0\|_\infty, v\}.$$
Since $v$ is positive and satisfies $v(-\infty) = +\infty$, then for any initial datum $u_0$ as in~\eqref{initcond} we can shift $v$ (without loss of generality) so that $u_0 \leq \overline{u}_0$. As $\overline{u}_0$ is a supersolution, we know that $\overline{u}$ is decreasing in time and therefore it converges (at least) locally uniformly to a nonnegative stationary state $p \leq v$. In particular, $p$ is a ground state and, by Proposition~\ref{othercritical}, we have that $p \equiv 0$. By the comparison principle, the solution $u$ of \eqref{problemmoving} with initial datum $u_0$ also converges locally uniformly to~0. Proceeding as in the vanishing scenario of the previous subsection, this convergence is in fact uniform. In other words, vanishing always occurs.

Assume now that $v$ is the travelling wave with minimal speed (this may only happen when $c= c^* = 2\sqrt{g_+ '(0)}$). Up to some shift we assume that $v(0)=1-\frac{\eta}{2}$ where $\eta>0$ is small enough so that $\eta\leq -g_+ '(1)/4$ and $g_+'(s)\leq g_+'(1)/2<0$ for all $s\in(1-\eta,1+\eta)$. As we do not assume that $u_0 < 1$, we cannot directly compare the solution with $v$. Instead, we introduce
\begin{equation}\label{eq:supsol11}
\overline{v} (t,x) = v (x - c^* t -  \gamma (1 - e^{-\eta t} )) + \eta e^{-\eta t} \varphi (x - c^* t - \gamma (1-e^{-\eta t})),
\end{equation}
with $\varphi$ a smooth and nonincreasing function equal to 1 on $(-\infty, -1]$ and to 0 on $[0,+\infty)$, and $\gamma>0$ a positive constant such that $\gamma\min_{-1\leq s\leq0} (-v'(s))\geq ||\varphi''||_\infty$. Clearly $\overline{v}$ is a supersolution of \eqref{pb:homomono} for all $t\geq 0$ and $x\geq c^*t+\gamma(1-e^{-\eta t})$. On the other hand for any $t\geq 0$ and $x\leq c^*t+\gamma(1-e^{-\eta t})$, we have that $\overline{v} (t,x) \in (1 - \eta, 1+ \eta)$ and therefore
\begin{align*}
\partial _t \overline{v}-\Delta\overline{v}-g_+(\overline{v})&\geq \eta e^{-\eta t}\left(-\gamma v'-\eta\varphi-c^*\varphi'-\eta\gamma e^{-\eta t}\varphi'-\varphi''-\frac{g_+ '(1)}{2}\varphi\right)\\
&\geq \eta e^{-\eta t}\left(-\gamma v'-\varphi''-\frac{g_+ '(1)}{4}\varphi\right).
\end{align*}
If $x- c^*t-\gamma(1-e^{-\eta t})\leq -1$ it follows from $g_+ '(1)<0$ and $v'<0$ that 
$$\partial_t \overline{v}-\Delta \overline{v}-g_+(\overline{v})\geq0.$$
If $-1\leq x- c^*t-\gamma(1-e^{-\eta t})\leq 0$, the same inequality follows from $\gamma\min_{-1\leq s\leq0} (-v'(s))\geq ||\varphi''||_\infty$. Thus $\overline{v}$ is a supersolution of \eqref{pb:homomono} in $\Omega$, together with the Neumann boundary condition when $\omega$ is bounded.

Let us now check that the solution $u$ lies below $\overline{v}$ after some finite time. First, it follows from Lemma~\ref{pratique} and the fact that $p_c^+ <1$ that
\begin{equation}\label{presque}
\limsup_{ t\to +\infty} \sup_{(x,y) \in \R^{N}}  u (t,x,y) \leq 1.
\end{equation}
Moreover, because we assume that the support of the initial datum $u_0$ is bounded in the right $x$-direction, we can compare the solution with $\overline{v}_2 (t,x):=\frac{e^{At}}{4\pi t}\int_\R \max_{y\in\omega} u_0(z,y) e^{-\frac{|x+ct-z|^2}{4t}}dz$, which is a supersolution of \eqref{problemmoving} as soon as $A \geq  \sup g_+(u)/u$. We get that $u(t,x,y)$ decays faster than any exponential as $x \to +\infty$, in a uniform way with respect to $y \in \omega$. Together with~\eqref{presque}, this implies that one can find $T>0$ and some $X>0$ such that
$$u (T,x,y) \leq \overline{v} (0,x-X),$$
for all $x \in \mathbb{R}$ and $y \in \omega$. Proceeding as above, the solution associated with the initial datum $\overline{v} (0,\cdot)$ converges to~$0$ as time goes to $+\infty$, and by a comparison principle the solution $u$ vanishes.

\section{Sharp threshold phenomena}\label{sec:sharpthresholds}

We are now in a position to prove Theorems~\ref{th:threshold_speed} and~\ref{th:threshold_u0}. Our method relies on the fact that, when the solution does not spread, then it satisfies some uniform in time exponential estimates which allow us to apply Lemma~\ref{lemma:maxprinciple} and Proposition~\ref{othercritical}; see Proposition~\ref{th:boundexp} below.

\subsection{Uniform in time exponential estimates}

Here we prove the following proposition on the asymptotic behaviour of $u(t,x,y)$ as $x \to +\infty$. 
\begin{Prop}\label{th:boundexp}
 Let $u_0$ be as in \eqref{initcond}, i.e. it is bounded, nonnegative and has finite support in the right $x$-direction. Assume that the corresponding solution~$u$ of \eqref{problemmoving} does not spread (in the sense of Definition~\ref{def:dyn}).
\begin{itemize} 
\item If $c\in(2\sqrt{g_+'(0)},c^*)$, then for all $\gamma>0$ such that $c>2\sqrt{g_+ '(0)+\gamma}$, there exist $\varepsilon>0$ and $X_\varepsilon>0$ such that for any $t\in(0,+\infty)$, $y \in \omega$ and $x \geq X_\varepsilon$,
$$u(t,x,y)\leq \varepsilon e^{-\lambda_\gamma (c)(x - X_\varepsilon)},$$
where $\lambda_\gamma (c) :=\frac{c+\sqrt{c^2-4(g_+'(0)+\gamma)}}{2}$.
\item If $c=2\sqrt{g_+ '(0)} < c^*$, then for all $\beta\in (0,1)$, there exist $X_r^1 > X_r^2 >0$ such that for all $t\in(0,+\infty)$, $y \in \omega$ and $x\geq X_r^1$,
$$u(t,x,y)\leq (1+(x - X_r^2)^\beta)e^{-\frac{c}{2} (x- X_r^2) }.$$
\end{itemize}
\end{Prop}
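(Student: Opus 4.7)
The plan is to build a stationary exponential supersolution $V$ on the half-strip $\{x\geq X_\varepsilon\}\times\omega$ and apply the parabolic maximum principle (Lemma~\ref{lemma:maxprinciple}), with the non-spreading hypothesis entering only through the boundary control at $x=X_\varepsilon$. For the supercritical case $c>2\sqrt{g'_+(0)}$, I would set $\lambda:=\lambda_\gamma(c)$ and $V(x):=\varepsilon\,e^{-\lambda(x-X_\varepsilon)}$, so that the defining identity $\lambda^2-c\lambda+(g'_+(0)+\gamma)=0$ yields $-V''-cV'=(g'_+(0)+\gamma)V$. Combined with $f\leq g_+$ (which follows from part~$(1)$ of Hypotheses~\ref{hypsPb}) and the $C^{1,r}$ expansion $g_+(s)\leq g'_+(0)s+Cs^{1+r}$, this makes $V$ a stationary supersolution on $\{x\geq X_\varepsilon\}\times\omega$ provided $\varepsilon$ is chosen small enough, independently of $X_\varepsilon$. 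The critical case $c=2\sqrt{g'_+(0)}$ uses the polynomial-prefactor profile $V(x)=(1+(x-X_r^2)^\beta)\,e^{-c(x-X_r^2)/2}$ with $\beta\in(0,1)$; the computation is essentially the one in the vanishing construction of Section~\ref{sec:interm}, where the positive contribution $-\beta(\beta-1)(x-X_r^2)^{\beta-2}e^{-c(x-X_r^2)/2}$ absorbs the $C_rV^{1+r}$ H\"{o}lder remainder once $X_r^2$ is large. To close the argument via Lemma~\ref{lemma:maxprinciple} I then need: (a) $u(0,\cdot)\leq V$ on $\{x\geq X_\varepsilon\}$, which is automatic as soon as $X_\varepsilon$ exceeds the right endpoint of the support of $u_0$; (b) super-exponential Gaussian decay of $u(t,\cdot)$ at $x=+\infty$ for each $t>0$, obtained by bounding $f\leq Ku$ and using the heat-kernel representation of the resulting linear equation; and (c) a uniform-in-time bound $\sup_{t\geq 0,\,y\in\omega}u(t,X_\varepsilon,y)\leq\varepsilon$.

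Item~(c) is where the non-spreading hypothesis enters. I would actually establish the stronger statement that $\sup_{t\geq 0,\,y\in\omega}u(t,X,y)\to 0$ as $X\to+\infty$, by contradiction. If it failed, there would be sequences $(t_n,x_n,y_n)$ with $x_n\to+\infty$ and $u(t_n,x_n,y_n)\geq\varepsilon_0>0$. Bounded $(t_n)$ is excluded immediately by the Gaussian bound on $u$ inherited from the compact support of $u_0$. In the case $t_n\to+\infty$, parabolic regularity together with the uniform-in-$y$ convergence $f(\cdot+x_n,\cdot+y_n,\cdot)\to g_+$ from part~$(3)$ of Hypotheses~\ref{hypsPb} allow me to extract (along a subsequence) a bounded positive entire solution $u_\infty$ of $\partial_tu_\infty=\Delta u_\infty+c\partial_xu_\infty+g_+(u_\infty)$ with $u_\infty(0,0,0)\geq\varepsilon_0$. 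In the fixed frame, $v(t,z):=u_\infty(t,z-ct)$ becomes a strictly positive bounded entire solution of the homogeneous monostable equation $\partial_tv=\Delta v+g_+(v)$. Applying the Aronson--Weinberger hair-trigger effect~\cite{AW} to the Cauchy problem started at any time $t_0$ with the positive data $v(t_0,\cdot)$ yields $v(T,\cdot)\to 1$ locally uniformly as $T\to+\infty$. Fixing $T$ so that $u_\infty(T,\cdot)\geq\eta$ on a ball of radius $\rho$ centered near $(-cT,0)$ (where $\eta<1-\delta$ and $\rho$ are the height and base radius of the spreading subsolution $v^\delta$ of Section~\ref{sec:interm}) and choosing $n$ so large that $x_n-cT\geq X_\delta$ lies in the favorable zone, the local uniform convergence back to $u$ shows that $u(t_n+T,\cdot)$ dominates a shifted copy of $v^\delta$. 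By the comparison principle and the spreading analysis of Section~\ref{sec:interm}, $u$ must then spread, contradicting the non-spreading hypothesis.

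The main obstacle is precisely this last reduction: upgrading a pointwise lower bound $u_\infty(0,0,0)\geq\varepsilon_0$ at a single point into a lower bound of amplitude close to~$1$ on a ball large enough to contain the support of $v^\delta$. A local parabolic Harnack estimate would only yield a much smaller constant and would be insufficient, so one must invoke the global Fisher-type convergence to~$1$ of the Cauchy problem in the fixed frame to amplify the bound. A secondary technicality, harmlessly handled by the $y$-uniformity already built into part~$(3)$ of Hypotheses~\ref{hypsPb}, is the possible unboundedness of the shift~$y_n$ when $\omega=\R^{N-1}$.
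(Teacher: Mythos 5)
Your proof is correct and follows the same overall architecture as the paper's: both reduce the proposition to the uniform-in-time smallness statement $\sup_{t\geq 0,\,y\in\omega}u(t,X,y)\to 0$ as $X\to+\infty$ (your item~(c), the paper's Lemma~\ref{uepsilonthm}), and then conclude with exactly the same stationary supersolutions $\varepsilon e^{-\lambda_\gamma(c)(x-X_\varepsilon)}$ and $(1+(x-X_r^2)^\beta)e^{-c(x-X_r^2)/2}$ on a right half-strip, with the boundary control at $x=X_\varepsilon$ supplied by that lemma. The one place where you genuinely diverge is the proof of the smallness lemma. The paper stays at finite~$n$: from $u(t_n,x_n,y_n)>\varepsilon$ and a gradient bound it places a small bump $\varepsilon\chi_{B_\eta(x_n,y_n)}$ under $u(t_n,\cdot)$, runs the homogeneous equation $\partial_t\underline{v}=\Delta\underline{v}+g_\delta(\underline{v})$ from that bump for a \emph{finite} time~$T$ until it covers the spreading subsolution $v^\delta$ of Section~\ref{sec:interm} (truncating to a Dirichlet problem on $\{x\geq -D\}$ so that the shifted comparison stays inside the favourable zone $\{x\geq X_\delta\}$ on $[0,T]$), and compares directly with $u$ on $[t_n,t_n+T]$. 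You instead pass to the limit $n\to+\infty$ to obtain an entire solution $u_\infty$ of the limiting equation, move to the fixed frame, and invoke the Aronson--Weinberger hair-trigger effect there before transferring back to $u$ at time $t_n+T$. Both routes are valid; yours avoids the Dirichlet-truncation device but needs the extra compactness step (which you justify: bounded $t_n$ is excluded by the Gaussian bound, and the $y$-uniformity of $f\to g_+$ handles unbounded $y_n$), while the paper's finite-time comparison is more elementary and never leaves the original solution. Your diagnosis of the crux --- that a pointwise lower bound must be amplified to height $\eta<1-\delta$ on a ball containing the support of $v^\delta$, which a Harnack inequality cannot deliver --- is precisely the point that both arguments address, yours via the hair-trigger effect for the entire solution and the paper's via the finite-time spreading of the truncated ignition problem.
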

The above proposition only deals with the case when $c \in [2 \sqrt{g_+ ' (0)}, c^*)$. The reason is that other speed ranges have already been dealt with in the previous section. However, one may easily check that Proposition~\ref{th:boundexp} also holds true for any speed $c \geq c^*$.

Notice that the above Proposition~\ref{th:boundexp} is closely related to the energy approach introduced in~\cite{Heinze} and used in several papers to prove convergence to travelling wave solutions~\cite{GR,M04,MN12,MZ,Risler}. We also refer to~\cite{BN} where the energy approach was used in the context of climate change models. Indeed, consider the functional
$$E_c[w] :=\int_{\R \times \omega } e^{cx}\left\{\frac{| \nabla w |^2}{2}-F(z,w)\right\},$$
with $F (x,y,w) = \int_0^w f(x,y,s) ds$, and which is well-defined for any function~$w$ in the weighted Sobolev space $H^1 (\omega \times \R , e^{cx} dx dy)$. The functional $E_c $ is the natural energy associated with~\eqref{problemmoving} in the sense that~\eqref{problemmoving} is a gradient flow generated by~$E_c$.

Then, when $c > 2 \sqrt{g'(0)}$ and $\omega$ is bounded, Proposition~\ref{th:boundexp} implies that the energy $E_c [ u(t,\cdot)]$ of the solution remains bounded uniformly in time. Using the energy functional as a Lyapunov function, one can then prove the large time convergence (at least up to a time sequence) to some stationary state. Such an approach has been used especially in~\cite{MZ} where the authors also obtained sharp dichotomy results by observing that the energy remains bounded as time goes to infinity if and only if spreading does not occur.

Note however that if either $c = 2\sqrt{g'(0)}$ or $\omega = \R^{N-1}$, Proposition~\ref{th:boundexp} does not guarantee the boundedness of the energy, which is why we will use a different approach to prove our main results.

\begin{proof} We begin the proof by noting that there does not exist any sequence of times $t_n$ such that $u(t_n,\cdot)$ converges (locally uniformly) to $p^+_c$. Indeed, if such a sequence of times exists, then one can find some $n$ large enough so that $u(t_n, \cdot)$ lies above the compactly supported subsolution $v^\delta$ (up to some shift to the right, so that it remains a subsolution) constructed in subsection~\ref{sec:interm}. Hence spreading occurs, which contradicts our assumptions. 

Next we prove the following lemma.
\begin{Lemma}\label{uepsilonthm}
Under the assumptions of Proposition \ref{th:boundexp}, for all $\varepsilon>0$, there exists $X_\varepsilon \in \R$ such that 
$$\forall t \geq 0 , \ \forall x \geq X_\varepsilon, \ \forall y \in \omega, \quad u(t,x,y)\leq\varepsilon .$$
\end{Lemma}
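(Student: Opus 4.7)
The plan is to argue by contradiction, using the compactly supported subsolution~$v^\delta$ of Section~\ref{sec:interm} to force spreading of~$u$. Suppose the conclusion fails: there exist $\varepsilon_0 > 0$ and sequences $t_n \geq 0$, $x_n \to +\infty$ and $y_n \in \omega$ with $u(t_n, x_n, y_n) \geq \varepsilon_0$ for all $n$. I would first rule out bounded $(t_n)_n$. Let $M := \max\{1,\|u_0\|_\infty\}$ and $C$ be a uniform Lipschitz constant of $s \mapsto f(x,y,s)$ on $[0,M]$. A direct computation shows that $w(t,x) := Me^{Ct}e^{-c(x-X)}$, with $X$ the right edge of $\mathrm{supp}\, u_0$, is a supersolution of the equation linearised by $Cu$ on $\{x \geq X\}$, dominating $u$ at $t = 0$ and at $x = X$. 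Hence $u(t,x,y) \leq w(t,x)$, which vanishes as $x \to +\infty$ uniformly on bounded time intervals, so that up to a subsequence $t_n \to +\infty$.

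Next I would shift and pass to the limit. Set $u_n(t,x,y) := u(t+t_n, x+x_n, y+y_n)$ when $\omega = \R^{N-1}$, or drop the $y$-shift and extract $y_n \to y_\infty \in \overline{\omega}$ when $\omega$ is bounded. Standard parabolic estimates provide, along a subsequence, an entire solution $u_\infty \geq 0$ of
$$\partial_t u_\infty - \Delta u_\infty - c\partial_x u_\infty = g_+(u_\infty),$$
the convergence of the reaction term being ensured by Hypothesis~\ref{hypsPb}$(3)$ together with $x_n \to +\infty$ (which suffices even when $y_n$ is unbounded, since the limit $g_+$ does not depend on $y$). Since $u_\infty(0,0,\cdot) \geq \varepsilon_0$, the strong maximum principle (together with the Hopf lemma in the bounded-$\omega$ case) forces $u_\infty > 0$ throughout its domain.

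The key step is to show that $u_\infty(T_0,\cdot)$ exceeds the peak $\max v^\delta$ on the support of $v^\delta$ for some $T_0$. Passing to the stationary frame via $\tilde u(t,x,y) := u_\infty(t, x-ct, y)$, which is a positive entire solution of the homogeneous monostable equation $\partial_t \tilde u - \Delta \tilde u = g_+(\tilde u)$, I would fix $T > 0$ and a small ball $B$ where $\tilde u(-T,\cdot) \geq \xi > 0$. Aronson--Weinberger spreading~\cite{AW} applied to the solution $\underline v$ with initial datum $\xi\chi_B$ yields convergence to~$1$ uniformly on $\{\|(x,y)\| \leq c' t\}$ for every $c' < c^*$; parabolic comparison gives $\tilde u(-T+t,\cdot) \geq \underline v(t,\cdot)$. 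Since $c < c^*$ one may pick $c' \in (c,c^*)$, so that translating back to the moving frame yields $u_\infty(T_0,\cdot) \to 1$ locally uniformly as $T_0 \to +\infty$, which is enough because $\max v^\delta < 1-\delta$.

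Finally, by the local convergence $u_n \to u_\infty$, for $n$ large enough we have $u(T_0 + t_n, x, y) \geq v^\delta(x-x_n, y-y_n)$ on the translated support. Since $x_n \to +\infty$, this translate sits in $\{x \geq X_\delta\}$ where $f \geq g_\delta$, and is therefore a stationary subsolution of~\eqref{problemmoving}. The argument of Section~\ref{sec:interm}, combined with Lemma~\ref{pratique}, then forces $u(t,\cdot) \to p^+_c$ locally uniformly as $t \to +\infty$, contradicting the non-spreading assumption. The main technical obstacle lies in the third step: one must exploit that $u_\infty$ is an \emph{entire} (and not merely forward) solution in order to extract a strictly positive lower bound on a compact set at an arbitrarily negative time, and the assumption $c < c^*$ is crucially used to outrun the drift when passing back from the stationary to the moving frame.
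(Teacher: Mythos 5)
Your proof is correct, but it takes a genuinely different route from the paper's. The paper never passes to a limit: it places the small bump $\varepsilon\chi_{B_\eta(x_n,y_n)}$ directly below $u(t_n,\cdot)$, evolves it under the auxiliary monostable equation $\partial_t \underline{v}=\Delta \underline{v}+g_\delta(\underline{v})$ posed on a truncated half-space $\{x\geq -D\}$ with a Dirichlet condition --- so that, once shifted to travel with speed $c$, its support stays in $\{x\geq X_\delta\}$ on $[0,T]$ for $n$ large and it is a genuine subsolution of~\eqref{problemmoving} --- and invokes Aronson--Weinberger~\cite{AW} to get $\underline{v}^D(T,\cdot)\geq v^\delta$; the comparison principle then transfers this to $u(t_n+T,\cdot)$ at finite $n$, with no need to know that $t_n\to+\infty$. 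You instead extract an entire solution $u_\infty$ of the homogeneous limit equation and run Aronson--Weinberger there, which forces you to add the preliminary step ruling out bounded $(t_n)_n$ (your supersolution $Me^{Ct}e^{-c(x-X)}$ does this, modulo a Phragm\'en--Lindel\"of justification of the comparison on the unbounded half-space, which is standard here since $u-w$ is bounded above) and to transfer the conclusion back through locally uniform convergence on the compact support of $v^\delta$. Both arguments hinge on the same two ingredients, namely $c<c^*$ and the compactly supported subsolution $v^\delta$ of Section~\ref{sec:interm}, so the difference is one of packaging: the paper's truncation trick is more quantitative and avoids compactness machinery, while your limit argument is conceptually cleaner but longer. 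One remark: the ``main technical obstacle'' you identify is illusory. You do not need the entire solution at arbitrarily negative times; $u_\infty(0,0,\cdot)\geq\varepsilon_0$ together with interior regularity already gives a positive lower bound on a fixed small ball at time $0$, and the Aronson--Weinberger comparison launched from time $0$, with $c'\in(c,c^*)$ to outrun the drift, yields $u_\infty(T_0,\cdot)\to 1$ locally uniformly exactly as you describe; had you actually needed a uniform lower bound at times $-T$ with $T\to+\infty$, the argument would be in trouble, since positive entire solutions may decay as $t\to-\infty$.
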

\begin{proof} We proceed by contradiction and let $\varepsilon >0$, $t_n>0$ and $(x_n,y_n) \in \Omega$ with $x_n \to +\infty$ be such that $u(t_n,x_n,y_n) > \varepsilon$. 

Moreover, by boundedness of $u$ and hence of its spatial derivatives, we find $\eta >0$ such that for all $n$,
$$u(t_n, x,y) \geq \varepsilon \chi_{B_\eta (x_n,y_n)} (x,y) =: \underline{v}_{0} (x-x_n, y-y_n),$$
where $\chi$ denotes the characteristic function and $B_\eta (x_n,y_n)$ the ball of radius $\eta$ centered at $(x_n,y_n)$. 

As we already did several times, choose $\delta >0$ small and $X_\delta$ large so that, when $x \geq X_\delta$, $y\in\omega$, $f (x,y,\cdot) \geq g_\delta (\cdot)$ with $g_\delta$ a monostable type nonlinearity, whose zeros are 0 and $1- \delta$ and minimal wave speed is $c^*_\delta > c$.  We consider $\underline{v}$ the solution of the monostable equation 
$$\partial_t \underline{v} = \Delta \underline{v} + g_\delta (\underline{v})$$
in $\R^{N}$, 
together with the initial datum $\underline{v}_0 (x,y)$. In the case when~$\omega$ is bounded one may instead define $v_0 = \varepsilon \chi_{\{|x-x_n| \leq \eta\} }$ so that $v$ does not depend on~$y$. As mentioned before, the solution spreads with speed $c^*_\delta$ according to the seminal results of Aronson and Weinberger~\cite{AW}. 
In particular, up to decreasing $\delta$, there exists some $T>0$ such that $\underline{v} (T,\cdot)$ lies above the compactly supported subsolution $v^\delta $ from subsection~\ref{sec:interm}.

Unfortunately we cannot directly compare $u$ with $\underline{v}$, so we instead consider $\underline{v}^D$ the solution of
\begin{equation*}
\left\{
\begin{array}{l}
\partial_t \underline{v}^D = \Delta \underline{v}^D + g_\delta (\underline{v}^D), \vspace{3pt} \\
\underline{v}^D (t=0) = \underline{v}_0 ,
\end{array}
\right.
\end{equation*}
in the truncated domain $\{ x \geq -D\}$ with $D>0$, and a Dirichlet boundary condition $\underline{v}^D (x=D) = 0$. 
Clearly $\underline{v}^D$ converges (locally uniformly in time and space) to $\underline{v}$ as $D \to +\infty$, and thus we can choose $D$ large enough so that $\underline{v}^D (T,\cdot)$ again lies above $v^\delta$.

Furthermore, we have that $\underline{v}^D (x-x_n+ct,y+y_n)$ is a subsolution of~\eqref{problemmoving} as long as $-D + x_n - ct \geq X_\delta$, which includes the interval of time $[0,T]$ provided $n$ is large enough. Applying the comparison principle, we get that
$$u(t_n + t,x,y) \geq v^\delta (x-x_n,y- y_n)$$
for large enough $n$, which implies that $u$ spreads (see also Remark~\ref{rk:yshift}), a contradiction.
\end{proof}
\begin{Rk}
In some sense, the proof of the lemma above completes what we have shown in subsection~\ref{sec:interm}. Indeed, we had shown that if the initial datum is large enough, then spreading occurs. Here we see that spreading also occurs for small initial data, provided their support is far enough to the right. This is to be expected considering that $c$ is less than the spreading speed of the limiting equation as $x \to +\infty$.
\end{Rk}

We go back to the proof of Proposition \ref{th:boundexp}. Let us first assume that $c \in (2 \sqrt{g_+ '(0)},c^*)$, and choose any $\gamma >0$ such that $c > 2 \sqrt{g_+'(0) +\gamma}$. Let then $\varepsilon>0$ be small enough so that $g_+ ' (s)<g_+ '(0)+\gamma$ for all $s\in[0,\varepsilon]$. Define 
$$v(x)=\varepsilon e^{-\lambda_\gamma (c) (x -X_\varepsilon)},$$
where $X_\varepsilon >0 $ comes from Lemma~\ref{uepsilonthm} and $\lambda_{\gamma} (c)$ was defined in Proposition~\ref{th:boundexp}. Noting that $\lambda_\gamma (c)$ is the largest root of $\lambda^2 - c \lambda + g_+'(0) + \gamma$ and from our choice of $\varepsilon$, we get that for all $x \geq X_\varepsilon$,
\begin{align*}
\partial_t v- \partial_x^2 v -c \partial_x v-g_+ (v)&=-\lambda_\gamma (c)^2v+c \lambda_\gamma (c) v-g_+ (v) \geq 0.
\end{align*}
Moroever, up to increasing $X_\varepsilon$ and without loss of generality, we have that $u_0 \equiv 0 \leq v$ in $[X_\varepsilon, + \infty) \times \omega$. Using Lemma~\ref{uepsilonthm}, we also know that 
$u(t, X_\varepsilon,y )\leq \varepsilon = v(X_\varepsilon),$
for all $t \geq 0$ and $y \in \omega$. Therefore, applying the parabolic maximum principle 
we conclude as wanted that 
$$\forall (x,y) \in [X_\varepsilon,+\infty) \times \omega, \ t\geq 0, \quad u(t,x,y)\leq v(x).$$

We now consider the case $c=2\sqrt{g_+ '(0)}$ and define, for all $0< \beta<1$, $x\geq0$,
$$v(x)=(1+x^\beta)e^{-\frac{c}{2}x},$$
Thanks to the $C^{1,r}$ regularity of $g$, one may find $X>0$ large enough so that $v(x)$ again satisfies
$$\partial_t  v - \partial_x^2 v - c\partial_x v - g_+ (v) \geq 0$$
for all $x \geq X$. We refer to the vanishing scenario of subsection~\ref{sec:interm} where the same computation is performed.

On the other hand, since $v(X)>0 $,
it follows from Lemma~\ref{uepsilonthm} that there exists $X' >0 $ such that, for all $t\geq 0$ and $y\in\omega$,
$$u(t,X',y ) \leq v(X) .$$
Up to increasing $X'$ and since $u_0$ has compact support on the right $x$-direction, we can also assume that $u_0 (x) = 0 \leq v (x - X' + X)$ for all $x \in [X' , +\infty)$.
Therefore, we apply the comparison principle again and conclude that for all $t \geq 0$, $x \geq X' $ and $y\in\omega$,
$$u(t,x,y) \leq v (x - X' + X) =  \left((1 + \sqrt{x-X' + X} \right) e^{-\frac{c}{2} (x - X' +X)}.$$
Letting $X_r^1= X'$ and $X_r^2 = X' - X$, we reach the wanted inequality and the proposition is proved.
\end{proof}

\subsection{Sharp threshold for the speed}\label{sec:sharpspeed}

Let us now turn to the existence of a threshold speed when the initial datum is fixed. In this section, we will make a slight change in our notations and define $u^c (t,x,y)$ the solution of \eqref{problemmoving}, in order to highlight the dependence on the speed parameter~$c$.

We already observe that, thanks to the monotonicity of $f(x,y,s)$ with respect to $x$, for any $c_1 < c_2$ the function $u^{c_1} (t,x+ (c_2-c_1) t,y)$ is a supersolution of \eqref{problemmoving} with $c=c_2$. Therefore, by comparison, we get that
\begin{equation}\label{speed_monotone}
u^{c_2} (t,x,y) \leq u^{c_1} (t,x + (c_2 - c_1)t ,y)
\end{equation}
for all $t \geq 0$ and $(x,y) \in \Omega$. It immediately follows that, if $u^{c_1} (t,x,y)$ vanishes in the sense of Definition~\ref{def:dyn}, i.e. it converges uniformly to~0 as $t\to +\infty$, then $u^{c_2} (t,x,y)$ also vanishes. Similarly, if $u^{c_1} (t,x,y)$ is grounded, or more generally if it decays to 0 as $x \to +\infty$ uniformly with respect to $y \in \omega$, then the above inequality implies that $u^{c_2} (t,z)$ converges to 0 uniformly in any set $[X,+\infty) \times \omega$ with $X \in \mathbb{R}$. Thanks to Lemma~\ref{pratique} and since $p_{c_2}^+ (-\infty,\cdot) = 0$, one can actually show that the convergence to 0 is uniform in space, i.e. vanishing occurs at the forced speed $c_2$.

Next we define
\begin{eqnarray*}
\overline{c}  &:=& \max \{ c \geq 0 \, | \ \forall  0 \leq c' < c , \ u^{c'} (t,x,y) \to p^+_{c'} (x) \mbox{ as } t \to +\infty \mbox{ locally uniformly, i.e. spreading occurs} \}, \vspace{3pt}\\
\underline{c} & :=& \min \{ c \geq 0 \, | \ \forall c' > c , \ u^{c'} (t,x,y) \to 0 \mbox{ as } t \to +\infty \mbox{ uniformly, i.e. vanishing occurs} \}.
\end{eqnarray*}
From Section~\ref{sec:3speed}, it is clear that both $\overline{c}$ and $\underline{c}$ are well defined and belong to the interval $[2 \sqrt{g'(0)},c^*]$. Moreover, it follows from the discussion above on the monotonicity with respect to $c$, and the fact that the solution may either spread or satisfy the exponential bounds of Proposition~\ref{th:boundexp}, that $\underline{c} = \overline{c}$. From now on, we denote this speed by $c (u_0)$. We have already proved Theorem \ref{th:threshold_speed}$(i)$.\\

It only remains to investigate how the solution of \eqref{problemmoving} behaves when $c = c (u_0)$, that is to prove Theorem~\ref{th:threshold_speed}$(ii)$. We will first show that, under the assumptions of Theorem~\ref{th:threshold_speed}$(ii)$, if the solution does not vanish then it remains positive in large time locally in space. This leads to the following proposition:
\begin{Prop}\label{prop:behavInter}
Under the assumptions of Theorem \ref{th:threshold_speed}$(ii)$ and if $c \in [2 \sqrt{g_+'(0)},c^*)$, then the solution~$u^c$ of \eqref{problemmoving}-\eqref{initcond} either spreads, vanishes, or is grounded in the sense of Definition~\ref{def:dyn}.
\end{Prop}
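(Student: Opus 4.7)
The plan is to show that if $u$ neither spreads nor vanishes, then it must be grounded. The $|x| \to \infty$ decay required by grounding is essentially automatic: Proposition~\ref{th:boundexp} yields a uniform-in-time exponential bound as $x \to +\infty$ (with constants independent of $y \in \omega$), while Lemma~\ref{pratique}, combined with the fact that $p^+_c(-\infty,\cdot) = 0$ uniformly in $y$, furnishes smallness as $x \to -\infty$ for all large $t$. Together these imply $\limsup_{t \to \infty} \sup_{|x| \geq X_\varepsilon,\, y \in \omega} u(t,x,y) \leq \varepsilon$ for every $\varepsilon > 0$, so it only remains to establish the pointwise positivity $\liminf_{t \to \infty} u(t,x_0,y_0) > 0$ everywhere.

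I would argue by contradiction. If $\liminf_{t \to \infty} u(t,x_0,y_0) = 0$ at some point, pick $t_n \to \infty$ realizing this liminf and, up to diagonal extraction via parabolic estimates, force $u(\cdot + t_n,\cdot,\cdot)$ to converge locally uniformly to an entire solution $v_\infty$ of~\eqref{problemmoving} with $v_\infty(0,x_0,y_0) = 0$. The parabolic strong maximum principle then forces $v_\infty \equiv 0$, so $u(t_n,\cdot,\cdot) \to 0$ locally uniformly on $\Omega$.

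The heart of the proof is to upgrade this local smallness to a uniform bound on $\Omega$ and compare with the supersolution $\overline{u}_0$ constructed in Section~\ref{sec:interm}. Fix $\eta > 0$ small and choose $X_\eta$ large so that $u(t,x,y) \leq \eta$ for $t$ large and $|x| \geq X_\eta$, uniformly in $y$. In case~(a), the slab $[-X_\eta,X_\eta] \times \omega$ is compact, so local uniform convergence to $0$ immediately gives $u(t_n,x,y) \leq \eta$ on this slab for $n$ large, hence everywhere; this is the easy case. In case~(b), where $f$ is $y$-independent and $u_0$ has support in $[-Y,Y]^{N-1}$, one additionally has to control $\sup_{y \in \mathbb{R}^{N-1}} u(t_n,x,y)$ on the slab. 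I expect to do this by combining the $y$-independence of $p^+_c$ and of the relevant limit equations with a $y$-separated comparison argument: a product-type (or Gaussian-type) bound for $|y|$ large stemming from the compact $y$-support of $u_0$ reduces matters to a bounded $y$-region, on which local uniform convergence applies.

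Once $u(t_n,\cdot,\cdot) \leq \eta$ on all of $\Omega$ (with the exponential tail from Proposition~\ref{th:boundexp} at $+\infty$), a suitable rightward translate of $\overline{u}_0$ dominates $u(t_n,\cdot,\cdot)$: the flat piece of value $A \simeq \eta$ covers the slab, while the exponential tail with rate $c/2+\delta$ (with $\delta$ small enough that $c/2+\delta < \lambda_\gamma(c)$) dominates the tail of $u$ supplied by Proposition~\ref{th:boundexp}. Since the Cauchy solution launched from $\overline{u}_0$ vanishes uniformly as $t \to \infty$ (this is exactly what was proved in Section~\ref{sec:interm}), the parabolic comparison principle forces $u(t,\cdot,\cdot) \to 0$ uniformly on $\Omega$, contradicting non-vanishing. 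The main obstacle is undoubtedly case~(b): promoting the local uniform smallness of $u(t_n,\cdot,\cdot)$ to a uniform bound in the unbounded $y$ direction is the only step where the special hypotheses of Theorem~\ref{th:threshold_speed}~(ii) must really be exploited.
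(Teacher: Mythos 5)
Your overall architecture matches the paper's proof: assume spreading fails so that Proposition~\ref{th:boundexp} applies, note that the decay part of grounding follows from Proposition~\ref{th:boundexp} (at $+\infty$) and Lemma~\ref{pratique} together with $p^+_c(-\infty,\cdot)=0$ (at $-\infty$), and reduce the whole statement to showing that $\liminf_t u(t,x_0,y_0)=0$ at one point forces vanishing, by trapping $u(t_n,\cdot)$ under the supersolution $\overline{u}_0$ of Section~\ref{sec:interm} and splitting the domain into a left tail, a right tail, and a middle slab. All of that is exactly what the paper does, including the remark that case~(a) is immediate by compactness of the slab.

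The genuine gap is your treatment of case~(b). A ``product-type (or Gaussian-type) bound for $|y|$ large stemming from the compact $y$-support of $u_0$'' cannot close the argument: since $\partial_s f(x,y,0)>0$ in the favourable zone, any heat-kernel-type estimate carries a factor $e^{Kt}$, so at time $t_n\to+\infty$ it is only useful for $|y|\gtrsim t_n$, while local uniform convergence to $0$ only controls a \emph{fixed} compact $y$-region; the two regimes never meet. What actually works — and what the paper uses — is a moving-plane/reflection argument (Lemma~\ref{reflec}): because $f$ is $y$-independent and $\mathrm{support}(u_0)\subset(-\infty,X]\times[-Y,Y]^{N-1}$, comparing $u$ with its reflection across any hyperplane $\{y_i=y_i^0\}$ with $|y_i^0|>Y$ and applying the strong maximum principle and Hopf's lemma shows $\partial_{y_i}u^c<0$ for $y_i\ge Y$ and $\partial_{y_i}u^c>0$ for $y_i\le -Y$, for all $t>0$. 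Hence $\sup_{y\in\mathbb{R}^{N-1}}u(t_n,x,y)=\sup_{y\in[-Y,Y]^{N-1}}u(t_n,x,y)$ with $Y$ fixed, and local uniform convergence on $[X_-,X_+]\times[-Y,Y]^{N-1}$ immediately upgrades to the full slab. You correctly identified this as the only step where hypothesis~(ii-b) is truly needed, but the mechanism you propose for it would fail; replace it with the reflection lemma. A minor additional point: in the critical case $c=2\sqrt{g_+'(0)}$ the tail comparison requires applying Proposition~\ref{th:boundexp} with an exponent $\beta'$ strictly smaller than the $\beta$ used in building $\overline{u}_0$, which your rate-counting (phrased only for the non-critical exponential case) does not cover.
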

In order to prove Proposition \ref{prop:behavInter} when $\omega=\R^{N-1}$, we will need the following lemma about the behaviour of the solution of Problem \eqref{problemmoving} when $|y|$ is large.
\begin{Lemma}\label{reflec}
Under the assumptions of Theorem \ref{th:threshold_speed}$(ii-b)$, i.e. the reaction term $f$ does not depend on $y$ and
$$support (u_0) \subset [-X,X] \times [-Y,Y]^{N-1},$$
then for all $i\in\{1,\dots,N-1\}$, $t >0$ and $(x,y)=(x,y_1,\dots,y_{N-1})\in\R^N$, we have
$$\begin{array}{lcl}
y_i \geq Y & \Rightarrow & \partial_{y_i} u^c < 0, \vspace{5pt}\\
y_i \leq -Y & \Rightarrow  & \partial_{y_i} u^c > 0.
\end{array}$$
\end{Lemma}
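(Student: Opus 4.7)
The statement has the structure of a parabolic symmetry/monotonicity result, and the natural tool is the moving plane (reflection) method. The plan rests on two features of the hypotheses of Theorem~\ref{th:threshold_speed}$(ii$-$b)$: first, $f(x,s)$ does not depend on $y$, so \eqref{problemmoving} is invariant under the reflection $y_i \mapsto 2\lambda - y_i$ for any $\lambda \in \R$; and second, the support of $u_0$ is contained in $(-\infty, X]\times[-Y,Y]^{N-1}$, so that $u_0$ vanishes identically in the half-space $\{y_i > \lambda\}$ as soon as $\lambda \geq Y$.

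Fix $i \in \{1,\ldots,N-1\}$ and $\lambda \geq Y$. Let $R_\lambda$ denote the reflection of $\R^{N-1}$ across the hyperplane $\{y_i = \lambda\}$, set $\Sigma_\lambda := \{y \in \R^{N-1} : y_i > \lambda\}$, and introduce
\[
w^\lambda(t,x,y) := u^c(t, x, R_\lambda y) - u^c(t, x, y),\qquad t > 0, \ (x,y) \in \R \times \Sigma_\lambda.
\]
Since $f$ is $y$-independent and both $\Delta$ and $c\partial_x$ commute with $R_\lambda$, both $u^c(t,x,y)$ and $u^c(t,x,R_\lambda y)$ satisfy \eqref{problemmoving}, so $w^\lambda$ satisfies a linear parabolic equation
\[
\partial_t w^\lambda - \Delta w^\lambda - c\,\partial_x w^\lambda = b(t,x,y)\, w^\lambda,
\]
with $b \in L^\infty$ given by an intermediate value of $\partial_s f$. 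On $\{y_i = \lambda\}$ we have $w^\lambda \equiv 0$ because $R_\lambda y = y$ there; at $t = 0$, the support hypothesis forces $u_0(x,y) = 0$ on $\Sigma_\lambda$, whence $w^\lambda(0,x,y) = u_0(x,R_\lambda y) \geq 0$. I would then invoke the parabolic maximum principle on the unbounded cylinder $\R \times \Sigma_\lambda$, valid because $w^\lambda$ is bounded (as $u^c$ is), to conclude $w^\lambda \geq 0$ throughout.

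Next, since $u_0\not\equiv 0$ with support inside $\{y_i \leq Y \leq \lambda\} = R_\lambda(\overline{\Sigma_\lambda})$, the function $w^\lambda(0,\cdot)$ is not identically zero on $\R \times \Sigma_\lambda$, so the strong maximum principle upgrades the inequality to $w^\lambda > 0$ strictly in $\R \times \Sigma_\lambda$ for every $t > 0$. Applying Hopf's lemma at the smooth hyperplane $\{y_i = \lambda\}$, where $w^\lambda$ attains its minimum $0$, yields $\partial_{y_i} w^\lambda\big|_{y_i = \lambda} > 0$, and a direct computation using $R_\lambda y = y$ there gives $\partial_{y_i} w^\lambda\big|_{y_i = \lambda} = -2\,\partial_{y_i} u^c\big|_{y_i = \lambda}$, so $\partial_{y_i} u^c < 0$ at every point with $y_i = \lambda$. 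Since $\lambda \in [Y,+\infty)$ is arbitrary, the first implication of the lemma follows; the second is obtained by the symmetric argument, reflecting across hyperplanes $\{y_i = \lambda\}$ with $\lambda \leq -Y$ and setting $\Sigma_\lambda := \{y_i < \lambda\}$.

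The main technical obstacle I anticipate is the rigorous use of the parabolic maximum principle on the unbounded cylinder $\R \times \Sigma_\lambda$: the standard unbounded-domain version requires a growth condition at infinity, which is supplied here by the uniform boundedness of $w^\lambda$ inherited from that of $u^c$, so that a Phragm\'en--Lindel\"of type argument (or the appendix maximum principle used elsewhere in the paper) goes through. The remaining ingredients, namely Hopf's lemma on a smooth hyperplane and the strong parabolic maximum principle for a linear equation with bounded coefficients, are standard; a very minor care is needed at the endpoint $\lambda = Y$ to ensure $w^\lambda(0,\cdot)\not\equiv 0$, which reduces to the nontriviality of $u_0$.
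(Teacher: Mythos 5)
Your proposal is correct and follows essentially the same route as the paper: reflect across a hyperplane $\{y_i=\lambda\}$ with $|\lambda|\geq Y$, observe that the difference of $u^c$ and its reflection solves a linear parabolic equation with bounded zero-order coefficient and has signed, nontrivial initial data on the relevant half-space, then apply the strong maximum principle and Hopf's lemma at the reflecting hyperplane. Your extra care about the weak maximum principle on the unbounded cylinder and about the endpoint $\lambda=Y$ is sound and only makes explicit what the paper leaves implicit.
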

\begin{proof}
This follows from a standard reflection method, as it is done for instance in~\cite{DM}. 
Consider any $y^0_i<-Y$ and define 
$$w_i(t,x,y)=u^c (t,x,y)-u^c (t,x,y_1,\dots,2y_i^0-y_i,\dots,y_{N-1}).$$
Then one can check that $w_i$ satisfies
\be\begin{cases}
\partial_t w_i-\Delta w_i-c\partial_x w_i=b(t,x,y)w_i, &\forall \: t>0,\: (x,y)\in\R^N,\: y_i<y_i^0,\\
w_i(t,x,y)=0, &\forall \: t>0,\: (x,y)\in\R^N,\: y_i=y_i^0,\\
w_i(0,x,y)\leq \not \equiv 0, &\forall \: (x,y)\in\R^N,\: y_i<y_i^0,
\end{cases}\ee
where the function~$b$ is bounded, using Hypothesis~\ref{hypsPb}(2) and the assumptions of Theorem \ref{th:threshold_speed}$(ii-b)$.
Then by the strong maximum principle we know that $w_i(t,x,y)<0$ for all $t>0,\: (x,y)\in\R^N,\: y_i<y_i^0$ and using the Hopf lemma we also know that 
$$\partial_{y_i} w(t,x,y_1,\dots,y_i^0,\dots,y_{N-1})=2\partial_{y_i} u^c (t,x,y_1,\dots,y_i^0,\dots,y_{N-1})>0.$$
The case $y_i^0>Y$ can be argued similarly.
\end{proof}
Now let us prove Proposition \ref{prop:behavInter}.
\begin{proof}
Let us assume that spreading does not occur. In particular, the exponential bounds of Proposition~\ref{th:boundexp} hold. We will prove that, if there exist $(x_0,y_0) \in \Omega$ and a sequence $t_n \to +\infty$ such that $u(t_n,x_0,y_0) \to 0$, then the solution vanishes. More precisely, we will show that there exists $n$ large enough such that
\begin{equation}\label{eq:m2}
u^c (t_n,\cdot) \leq \overline{u}_0
\end{equation}
in $\Omega$, where $\overline{u}_0$ denotes the supersolution constructed in the vanishing scenario of subsection~\ref{sec:interm}. 

Recalling Lemma~\ref{pratique}, the supremum limit of $u$ as time goes to infinity is less than the maximal steady state $p_c^+$. Since $p_c^+ (-\infty,\cdot) =0$, one can find $X_- >0$ such that $u^c(t,x,y) \leq \overline{u}_0 (x)$ for all $x \leq X_-$ and $y \in \omega$. Moreover, thanks to Proposition~\ref{th:boundexp} and by construction of $\overline{u}_0$ in subsection~\ref{sec:interm}, there also exists $X_+$ large enough such that, for all $t \geq 0$, $x \geq X_+$ and $y \in \omega$,
$$u^c (t,x,y) \leq \overline{u}_0 (x).$$
Note that when $c= 2\sqrt{g'_+ (0)}$, this requires to apply Proposition~\ref{th:boundexp} with some $\beta'$ chosen to be smaller than the $\beta$ involved in the construction of $\overline{u}_0$.

Next, it follows from our choice of the sequence $t_n$ that $u^c(t_n, \cdot)$ converges locally uniformly to 0 as $n \to +\infty$. In particular, for any $Y>0$, we can find $n$ large enough such that
$$\sup_{x \in [X_-,X_+], y \in \omega \cap [-Y,Y]^{N-1}} u^c (t_n,x,y) \leq \overline{u}(x).$$
Using Lemma \ref{reflec} in the case $\omega=\R^{N-1}$ , we obtain that
$$\sup_{x \in [X_-,X_+], y\in\omega} u^c (t_n,x,y) \leq \overline{u}(x),$$
for large enough~$n$. Together with previous inequalities, that means that \eqref{eq:m2} holds in the whole domain~$\Omega$. By comparison, the solution vanishes.

We have just proved that if the solution neither spreads nor vanishes, then for any $(x,y) \in \Omega$, one has $\liminf_{t \to +\infty} u^c (t,x,y) >0$. Grounding in the sense of Definition~\ref{def:dyn} immediately follows from another application of Proposition~\ref{th:boundexp}.
\end{proof}

Now we use Proposition~\ref{prop:behavInter} to investigate more precisely the behaviour of the solution at the threshold speed, that is when $c=c(u_0)$. 
Let us first prove the following claim. 
\begin{claim}\label{claim:notspreadspeed} When $c = c(u_0)$, then spreading does not occur.\end{claim}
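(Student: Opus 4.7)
The plan is to argue by contradiction. I would assume that spreading holds for $u^c$ at $c = c(u_0)$; since by the definitions of $\overline{c}$ and $\underline{c}$ we have $\underline{c} = c(u_0)$ and vanishing occurs for every $c' > c(u_0)$, it will suffice to exhibit some $c' > c(u_0)$ at which spreading also occurs, yielding the desired contradiction.

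The main ingredient is the compactly supported subsolution $v^\delta$ from Section~\ref{sec:interm}, combined with continuous dependence of the solution on the speed parameter $c$. First, I would arrange that $v^\delta$ be a subsolution of~\eqref{problemmoving} not only at speed $c = c(u_0)$ but simultaneously for every $c' \in [c(u_0), c(u_0) + \eta]$ for some fixed small $\eta > 0$. Inspecting the construction recalled in Section~\ref{sec:interm}, this amounts to ensuring $c' + (N-1)/\rho < c_1 < c^*_\delta$ for every such $c'$. Since $c(u_0) < c^*$, I would pick $\delta > 0$ small enough so that $c^*_\delta > c(u_0) + \eta$, then $c_1 \in (c(u_0) + \eta, c^*_\delta)$, and finally $\rho$ large enough that $c(u_0) + \eta + (N-1)/\rho < c_1$. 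The bounded $\omega$ case is easier thanks to Remark~\ref{rk:yshift}, since there $v^\delta$ may be chosen independent of $y$.

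Second, because $u^c(t,\cdot)$ converges to $p^+_c$ locally uniformly with $p^+_c(+\infty,\cdot) = 1 > \|v^\delta\|_\infty$, a suitable shift $X > 0$ and large time $T > 0$ will yield the strict inequality $u^c(T,\cdot) > v^\delta(\cdot - X, \cdot)$ on the (compact) support of $v^\delta(\cdot - X, \cdot)$. I would then invoke continuous dependence on $c$: the difference $w^{c'} := u^{c'} - u^c$ solves a linear parabolic equation with bounded coefficients and source term $(c - c')\partial_x u^c$, so by standard parabolic estimates $c' \mapsto u^{c'}(T,\cdot)$ is continuous in $L^\infty_\mathrm{loc}$. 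The strict inequality therefore persists for $c'$ close enough to $c$; choosing $c' \in (c(u_0), c(u_0) + \eta)$, we obtain $u^{c'}(T,\cdot) \geq v^\delta(\cdot - X, \cdot)$, and applying the spreading argument of Section~\ref{sec:interm} at speed $c'$ shows that $u^{c'}$ spreads, contradicting $c' > \underline{c}$. The main obstacle is securing the uniform validity of $v^\delta$ over a small interval of speeds rather than a single one; once this margin is built into the choice of $\delta$, $c_1$ and $\rho$ from the outset, continuous dependence and the standard spreading mechanism close the argument.
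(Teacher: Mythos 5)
Your proposal is correct and follows essentially the same route as the paper: argue by contradiction, use the local uniform convergence to $p^+_{c(u_0)}$ (whose limit at $+\infty$ is $1$) to place a shifted copy of the compactly supported subsolution $v^\delta$ of Section~\ref{sec:interm} below $u^{c}(T,\cdot)$, propagate this to nearby speeds $c'>c(u_0)$ by continuous dependence on $c$, and conclude spreading there, contradicting the definition of $c(u_0)$. The only differences are cosmetic — the paper takes $v^\delta$ to be a subsolution at the single speed $\frac{c(u_0)+c^*}{2}$ and transfers spreading to smaller speeds via the monotonicity~\eqref{speed_monotone}, whereas you build the margin directly into the choice of $\delta$, $c_1$ and $\rho$ — and you should add the one-line observation that the case $c(u_0)=c^*$ is already settled (vanishing occurs there by Section~\ref{sec:high}), which justifies your standing assumption $c(u_0)<c^*$.
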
 
Proceed by contradiction and assume that $u^{c (u_0)} (t,x,y)$ the solution of \eqref{problemmoving} with $c=c (u_0)$ converges locally uniformly to $p^+_{c (u_0)} $.
As we already dealt with the case $c=c^*$ where vanishing necessarily occurs, we only need to consider $c \in [2 \sqrt{g'(0)},c^*)$. Using the fact that $p^+_{c (u_0)} (x,y) \to 1$ as $x \to +\infty$, uniformly in $y\in\omega$, then for any $R>0$ and $\varepsilon >0$, one can find some $X>0$ and $T>0$ large enough such that $$u^{c (u_0)} (T,x,y) \geq (1 -\varepsilon) \chi_{B_R (0)} (x-X,y),$$ 
in $\mathbb{R} \times \omega$. 
By standard estimates, one can show that the solution of \eqref{problemmoving} depends continuously, in the locally uniform topology, on the parameter $c$.
In particular, for $\varepsilon ' >0$ small enough and any $c \in [c (u_0), c(u_0) + \varepsilon']$, the solution $u^{c}$ of \eqref{problemmoving} also satisfies $u^{c} (T,x,y) \geq (1 -2 \varepsilon) \chi_{B_R (0)} (x-X,y)$ for all $x \in \mathbb{R}$ and $y \in \omega$.

Now choose $R>0$ and $\varepsilon >0$ so that $(1-2\varepsilon) \chi_{B_R (0)} (x ,y) \geq v^\delta (x,y)$, where $v^\delta$ is a subsolution of~\eqref{problemmoving} with $c = \frac{c(u_0) + c^*}{2}  \in ( c(u_0),c^*)$, as we constructed in subsection~\ref{sec:interm} so that the associated solution spreads. According to the discussion above on the monotonicity of solutions with respect to~$c$, as well as the monotonicity of $f$ with respect to $x$, it is clear that the solution of \eqref{problemmoving} with initial datum $v^\delta (\cdot - X, \cdot)$ also spreads for any $c \in  (c (u_0) , \frac{c(u_0) + c^*}{2}   )$. Since
$$u^c (T,x,y) \geq (1-2\varepsilon) \chi_{B_R (0)} (x - X,y)  \geq v^\delta (x-X,y),$$
we conclude by comparison that $u^c$ spreads for all $c \in [c(u_0),c(u_0) + \varepsilon ' ]$. This contradicts the definition of $c(u_0)$ and proves Claim~\ref{claim:notspreadspeed}.\medskip

It remains to show the following claim.
\begin{claim}\label{claim:notvanishspeed} When $c= c(u_0)> 2\sqrt{g_+ '(0)}$, then vanishing does not occur. 
\end{claim}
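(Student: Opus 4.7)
The plan is to argue by contradiction. Suppose $c := c(u_0) > 2\sqrt{g_+'(0)}$ and that $u^c$ vanishes in the sense of Definition~\ref{def:dyn}. We will produce a speed $c' < c(u_0)$ for which $u^{c'}$ also vanishes, which directly contradicts the definition of the threshold~$c(u_0)$. Because $c > 2\sqrt{g_+'(0)}$, we may choose $c'$ with $2\sqrt{g_+'(0)} < c' < c$ and $c-c'$ as small as we wish, together with the stationary supersolution $\overline{u}_0^{c'}$ of~\eqref{problemmoving} at speed $c'$ built in the vanishing scenario of Section~\ref{sec:interm} (using $v(x) = e^{-(c'/2 + \delta')x}$, which requires precisely $c' > 2\sqrt{g_+'(0)}$). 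Recall that the solution of~\eqref{problemmoving} with speed~$c'$ starting from $\overline{u}_0^{c'}$ is nonincreasing in time and vanishes uniformly.

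The first step is to find $T$ large such that $u^c(T,\cdot) \leq \overline{u}_0^{c'}$ on all of $\Omega$. For $x \geq X_+$ large, since $u^c$ does not spread, Proposition~\ref{th:boundexp} provides a uniform-in-time bound $u^c(t,x,y) \leq \varepsilon\, e^{-\lambda_\gamma(c)(x - X_\varepsilon)}$; choosing $\gamma, \delta'$ small enough so that $\lambda_\gamma(c) > c'/2 + \delta'$ (possible since $\lambda_0(c) = (c+\sqrt{c^2-4g_+'(0)})/2 > c/2$), the bound on $u^c$ is dominated by $\overline{u}_0^{c'}$ for $x \geq X_+$. For $x \leq X_-$ very negative, $\overline{u}_0^{c'} \equiv A > 0$ while Lemma~\ref{pratique} together with $p_c^+(x,y) \to 0$ as $x \to -\infty$ forces $\sup_{x \leq X_-, y} u^c(T,x,y) \leq A$ for $T$ large. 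On the remaining bounded strip $x \in [X_-, X_+]$, the uniform vanishing of $u^c$ gives $\|u^c(T,\cdot)\|_\infty < \inf_{[X_-, X_+]} \overline{u}_0^{c'}$ once $T$ is chosen large enough, and we may even arrange a strict inequality $u^c(T,\cdot) < \overline{u}_0^{c'}$ everywhere.

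The second step is to upgrade this into the comparison $u^{c'}(T,\cdot) \leq \overline{u}_0^{c'}$. Fix the time $T$. Standard parabolic continuous dependence on the advection coefficient yields $u^{c'}(T,\cdot) \to u^c(T,\cdot)$ uniformly on compact sets as $c' \to c$, so the strict inequality on the compact box $[X_-,X_+] \times (\omega \cap \{|y| \leq Y\})$ is preserved for $c'$ sufficiently close to $c$. Outside this box, we dominate $u^{c'}(T,\cdot)$ by the solution of a linear heat equation with bounded growth $Ku$ and initial datum~$u_0$, which has support bounded to the right (and in case~(b), bounded in $y$). This yields Gaussian-type decay as $x \to +\infty$, $x \to -\infty$, and (in case~(b)) as $|y| \to \infty$, uniformly for $c'$ in a neighborhood of~$c$; such super-exponential decay easily beats the exponential decay of $\overline{u}_0^{c'}$ at $+\infty$ and the constant value $A>0$ elsewhere. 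In case~(b) with $\omega = \R^{N-1}$, Lemma~\ref{reflec} furthermore guarantees monotonicity of $u^{c'}(T,x,y)$ in $|y_i|$ for $|y_i|\geq Y$, so that smallness at large $|y|$ on the box translates to smallness everywhere in that regime.

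Once $u^{c'}(T,\cdot) \leq \overline{u}_0^{c'}$ holds on all of $\Omega$, the parabolic comparison principle gives $u^{c'}(t,\cdot) \leq \overline{u}^{c'}(t-T,\cdot)$ for all $t \geq T$, where the right-hand side converges to $0$ uniformly. Hence $u^{c'}$ vanishes, while $c' < c(u_0)$ should force spreading by the first part of Theorem~\ref{th:threshold_speed}, a contradiction. The step I expect to be most delicate is the uniform-in-$c'$ control of $u^{c'}(T,\cdot)$ outside compact sets, so that the pointwise inequality with $\overline{u}_0^{c'}$ can be enforced on the whole of $\Omega$ simultaneously; this is precisely what requires combining Proposition~\ref{th:boundexp}, the Gaussian heat-kernel estimates, and, in the unbounded cross-section case, the reflection Lemma~\ref{reflec}.
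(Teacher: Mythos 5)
Your overall strategy coincides with the paper's: assume vanishing at $c=c(u_0)$, transfer it to some $c'<c(u_0)$ by trapping $u^{c'}(T,\cdot)$ under an exponentially decaying stationary supersolution (using Proposition~\ref{th:boundexp} on the right, uniform vanishing of $u^{c(u_0)}$ plus locally uniform continuity in $c$ on a compact box, and Lemma~\ref{reflec} in case $(b)$), and then contradict part $(i)$ of Theorem~\ref{th:threshold_speed}. The right half-line and the compact box are handled correctly; your Gaussian bound at the fixed time $T$ for $x\to+\infty$ is a legitimate variant of the paper's half-space comparison on $(0,T)\times(X',+\infty)\times\omega$.

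There is, however, a genuine gap on the far left. You take for $\overline{u}_0^{c'}$ the supersolution of Section~\ref{sec:interm}, whose value for $x\le -X$ is the \emph{small} constant $A=v(X)=e^{-(c'/2+\delta')X}$, and you justify $u^{c'}(T,\cdot)\le A$ there by invoking Gaussian decay of $u^{c'}(T,\cdot)$ as $x\to-\infty$. But the initial data in~\eqref{initcond} are only required to have support bounded to the \emph{right}: $u_0$ may equal $\|u_0\|_\infty$ on an entire left half-line, in which case the heat-kernel supersolution stays bounded away from zero as $x\to-\infty$ and gives no decay there. Locally uniform continuous dependence on $c'$ does not reach $x\to-\infty$ either, so as written the inequality $u^{c'}(T,\cdot)\le\overline{u}_0^{c'}$ is not established on all of $\Omega$ and the final comparison cannot be launched. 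The paper sidesteps this by using the supersolution $\overline{u}_0(x)=\min\{M,\delta e^{-\frac{c(u_0)-\varepsilon}{2}(x+X)}\}$ with $M=\max\{\|u_0\|_{L^\infty},1\}$: since $f(x,y,s)\le -s/2$ far to the left, this is still a generalised (nonincreasing) supersolution for every speed in $[c(u_0)-2\varepsilon,c(u_0)]$, and on the far left the comparison is trivial because there $\overline{u}_0\equiv M\ge\|u^{c'}\|_{L^\infty(\R_+\times\Omega)}$. Your argument is repaired by making the same modification; otherwise you would have to prove smallness of $u^{c'}(T,\cdot)$ as $x\to-\infty$ uniformly for $c'$ near $c$, which is precisely the work the $\min\{M,\cdot\}$ construction is designed to avoid.
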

In particular, it is impossible that $c(u_0)= c^*$ if $c^* > 2 \sqrt{g_+ '(0)}$. To prove this claim, we again proceed by contradiction and assume that $u^{c(u_0)} (t,\cdot)$ converges uniformly to 0 as $t \to +\infty$. We will then show that vanishing also occurs when~$c$ is close to $c(u_0)$, contradicting the definition of $c(u_0)$. 

Recall first that, by the maximum principle, we have for all $c \geq 0$ that
\begin{equation}\label{defM}
\|u^{c} \|_{L^\infty (\R_+ \times \Omega)} \leq \max \{ \|u_0\|_{L^\infty (\Omega)} , 1\} =: M.
\end{equation}
Let $\varepsilon >0$ be small enough so that $ c(u_0) -  3\varepsilon > 2 \sqrt{g_+ '(0)}$ and $\delta>0$ be small enough so that for all $s\in[0,\delta)$,
$$c(u_0)- 3 \varepsilon>2\sqrt{g'_+(s)}.$$
We introduce
$$\overline{u}_0(x):= \min\{ M, \delta e^{-\frac{c(u_0)-\varepsilon}{2}(x+X)} \},$$ 
where $M$ was defined in~\eqref{defM} and $X>0$ is chosen such that for all $x<-X$ and $s \leq M$, $f(x,y,s)\leq -\frac{1}{2}s$. Then one can prove that $\overline{u}_0$ satisfies 
$$\partial_x^2 \overline{u}_0  + (c (u_0) - 2 \varepsilon) \partial_x \overline{u}_0  + f(x,y,\overline{u}_0) \leq 0 ,$$ 
in the sense of a generalised supersolution. As $\overline{u}_0$ is a nonincreasing function, it immediately follows that it is also a supersolution of~\eqref{problemmoving} for any $c' \in [c(u_0) -2 \varepsilon, c(u_0)]$.

Next, let $X' >0$ be such that the support of $u_0$ is included in $(-\infty,X')$, that $\overline{u}_0 (x) \geq M$ for any $x \leq -X'$ as well as, by Proposition~\ref{th:boundexp}, 
$$\forall t \geq 0, \ \forall x \geq X' , y\in\omega, \quad u^{c (u_0)} (t,x,y) \leq \frac{\overline{u}_0 (x)}{2}.$$
Because $u^{c (u_0)}$ goes extinct as $t \to +\infty$, there also exists $T>0$ such that
$$\forall | x | \leq X', y\in\omega , \quad u^{c (u_0)} (T,x,y) \leq \frac{\overline{u}_0 (x)}{2}.$$
Up to reducing $\varepsilon$ and by continuity of solutions of \eqref{problemmoving} with respect to $c$ in the locally uniform topology, we get for any $c' \in [ c (u_0)  - 2 \varepsilon, c (u_0)]$ that, in the case when $\omega$ is bounded,
$$\forall 0 \leq t \leq T, y \in \omega  , \quad u^{c' } (t, X',y) \leq \overline{u}_0 (X'),$$
and 
$$\forall | x | \leq X', y\in\omega , \quad u^{c '} (T,x,y) \leq \overline{u}_0 (x).$$
When $\omega = \R^{N-1}$, we only get that
$$\forall 0 \leq t \leq T, y \in  [-Y,Y]^{N-1},  \quad u^{c' } (t, X',y) \leq \overline{u}_0 (X'),$$
and 
$$\forall | x | \leq X', y\in  [-Y,Y]^{N-1} , \quad u^{c '} (T,x,y) \leq \overline{u}_0 (x).$$
However, thanks to Lemma \ref{reflec}, we again reach the conclusion that those inequalities hold for all $y \in \omega$.

Applying a comparison principle on $(0,T) \times (X',+\infty)\times\omega$, and since $\overline{u}_0 (x) \geq M \geq \|u^{c'} \|_{L^\infty (\R_+ \times \R)}$ for all $x \leq -X'$, one can check that for all $c' \in[c(u_0)- 2\varepsilon,c(u_0)]$,
$$\forall x \in \R,y\in\omega ,\quad u^{c'} (T,x,y) \leq \overline{u}_0 (x).$$
Let $\overline{u}$ be the solution of~\eqref{problemmoving} with $c=c(u_0) -  2\varepsilon$ starting with the supersolution $\overline{u}_0$. Then we have that $u^{c(u_0) -  2\varepsilon} (t+T,x,y) \leq \overline{u} (t,x)$ for all $x \in \R$, $y\in\omega$ and $t \geq 0$.
Moreover, the function $\overline{u}$ is decreasing with respect to time and therefore it converges to a stationary solution as time goes to infinity. Because  $\overline{u}_0(x)=o(e^{-\frac{c(u_0)-2\varepsilon}{2}})$ as $x\to+\infty$, one can proceed in the same way than at the end of section \ref{sec:interm}, to conclude that $\overline{u}$ actually converges (uniformly in space) to 0 as $t \to +\infty$. This means that $u^{c(u_0)-2 \varepsilon}$ vanishes, and so does $u^{c'}$ for any $c'\in[c(u_0)-2\varepsilon,c(u_0)]$ due to~\eqref{speed_monotone}. We have reached a contradiction.\\

Putting together Proposition~\ref{prop:behavInter} and Claims~\ref{claim:notspreadspeed} and~\ref{claim:notvanishspeed}, we conclude as announced that, when $c = c (u_0) > 2 \sqrt{g'(0)}$, then grounding occurs. Theorem~\ref{th:threshold_speed} is proved. \qed

\subsection{Sharp threshold for the initial datum}\label{sec:sharpinit}

In this section we assume that $2\sqrt{g'_+(0)}<c^*$, $c\in[2\sqrt{g'_+(0)},c^*)$ and $\omega$ is bounded. 
We consider a family of initial data $(u_{0,\sigma})_{\sigma >0}$ satisfying the assumptions of Theorem~\ref{th:threshold_u0} and denote by $u_\sigma$ the corresponding solution of Problem \eqref{problemmoving}.

We first define the set $\Sigma_0$, respectively $\Sigma_1$, of all $\sigma$ such that the solution $u_\sigma$ of \eqref{problemmoving} with initial condition $u_{0,\sigma}$ vanishes, respectively spreads. Then because of the monotonicity of the problem with respect to the initial condition $u_{0,\sigma}$, there exist $\underline{\sigma}\leq\overline{\sigma}$ such that 
$$(0,\underline{\sigma})\subset \Sigma_0\subset (0,\underline{\sigma}]$$
and 
$$(\overline{\sigma},+\infty)\subset\Sigma_1\subset[\overline{\sigma},+\infty).$$
We know from Proposition \ref{prop:behavInter}, that the solution either spreads, vanishes or is grounded. Thus if $\underline{\sigma}\neq\overline{\sigma}$ then for all $\sigma\in(\underline{\sigma},\overline{\sigma})$, the solution $u_\sigma$ is grounded, in the sense of Definition \ref{def:dyn}. We will start by the following claim.
\begin{claim}\label{claim:sharpinit}
$\underline{\sigma}\not\in\Sigma_0$ and $\overline{\sigma}\not\in\Sigma_1$.
\end{claim}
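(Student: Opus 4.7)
The plan is to prove the claim's two parts -- that $\Sigma_1$ is open from the left at $\overline{\sigma}$ and $\Sigma_0$ is open from the right at $\underline{\sigma}$ -- by contradiction, using the sub- and super-solutions from Section~\ref{sec:interm} to control nearby solutions via a continuous dependence argument on initial data.

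For $\overline{\sigma} \notin \Sigma_1$, I would assume $u_{\overline{\sigma}}$ spreads and derive that some $u_\sigma$ with $\sigma < \overline{\sigma}$ also spreads. Since $u_{\overline{\sigma}}(t,\cdot) \to p^+_c$ locally uniformly and $\|v^\delta\|_\infty < 1$ while $p^+_c \to 1$ as $x \to +\infty$ uniformly in $y$, at some time $T$ one can place a suitably shifted subsolution $v^\delta(\cdot - X, \cdot)$ from Section~\ref{sec:interm} strictly below $u_{\overline{\sigma}}(T, \cdot)$ on its compact support. The key input is continuity of $\sigma \mapsto u_\sigma(T, \cdot)$ in the $L^\infty_{loc}$ topology: this follows from the assumed $L^1$-continuity of $\sigma \mapsto u_{0,\sigma}$ and the standard $L^1 \to L^\infty_{loc}$ smoothing for the linear parabolic equation satisfied by $u_\sigma - u_{\overline{\sigma}}$, whose coefficient is bounded because $f$ is Lipschitz in $s$ on bounded ranges. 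Then for $\sigma < \overline{\sigma}$ close enough, $u_\sigma(T, \cdot) \geq v^\delta(\cdot - X, \cdot)$ on $\Omega$, and the spreading argument of Section~\ref{sec:interm} contradicts $\sigma < \overline{\sigma} = \inf \Sigma_1$.

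For $\underline{\sigma} \notin \Sigma_0$, I would assume $u_{\underline{\sigma}}$ vanishes and construct $\sigma > \underline{\sigma}$ with $u_\sigma$ also vanishing, using the vanishing supersolution $\overline{u}_0$ of Section~\ref{sec:interm} (equal to a small constant $A$ on $(-\infty, -X]$ and decaying as $e^{-(c/2+\delta)(x+2X)}$ on $[-X, +\infty)$, with the subexponential correction if $c = 2 \sqrt{g'_+(0)}$). Choosing $\gamma$ in Proposition~\ref{th:boundexp} so that the exponent $\lambda_\gamma(c)$ strictly exceeds $c/2 + \delta$, I can pick $T$ large so that $u_{\underline{\sigma}}(T, \cdot) \leq \overline{u}_0/4$ pointwise on all of $\Omega$: on bounded $x$-ranges this uses uniform vanishing, while for $x \to +\infty$ the strictly faster exponential decay of Proposition~\ref{th:boundexp} takes over. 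The monotone ordering of the family forces $u_{0,\sigma} \leq u_{0,\underline{\sigma}+1}$ for $\sigma \in (\underline{\sigma}, \underline{\sigma}+1)$, giving a uniform $L^\infty$ bound $M$ and a uniform compact support for the initial data. The non-negative difference $w_\sigma := u_\sigma - u_{\underline{\sigma}}$ then solves a linear parabolic equation with coefficient bounded in terms of $M$, and heat-kernel estimates yield
\[
w_\sigma(T, x, y) \leq C(T, M) \, G_T(x) \, \|u_{0,\sigma} - u_{0,\underline{\sigma}}\|_{L^1(\R \times \omega)},
\]
where $G_T(x)$ has Gaussian decay in $x$ as $x \to +\infty$. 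Since the Gaussian decays strictly faster than $\overline{u}_0 \sim e^{-(c/2+\delta)x}$ and $\overline{u}_0$ is bounded below by a positive constant on any bounded set, making $\sigma$ close enough to $\underline{\sigma}$ gives $w_\sigma(T, \cdot) \leq \overline{u}_0/2$ on $\Omega$, hence $u_\sigma(T, \cdot) \leq \overline{u}_0$. By comparison and the vanishing analysis of Section~\ref{sec:interm}, $u_\sigma$ vanishes, contradicting $\sigma > \underline{\sigma} = \sup \Sigma_0$.

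The hardest step is the vanishing half: unlike for spreading, locally uniform continuity does not suffice, since $u_\sigma(T, \cdot)$ must be dominated by $\overline{u}_0$ globally, including in the exponentially decaying right tail. This forces the careful matching of the exponents in Proposition~\ref{th:boundexp} and in the construction of $\overline{u}_0$, and relies on the uniformly compact support of the initial data (from the monotone ordering) so that the Gaussian tail of the linearized heat-kernel estimate beats $\overline{u}_0$'s exponential tail at $+\infty$.
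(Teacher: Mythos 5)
Your argument is correct, and the spreading half ($\overline{\sigma}\not\in\Sigma_1$) is essentially the paper's: locally uniform continuity in the parameter $\sigma$ plus the compactly supported subsolution $v^\delta$, exactly as in Claim~\ref{claim:notspreadspeed}. On the vanishing half you take a genuinely different route to the key global inequality $u_\sigma(T,\cdot)\le\overline{u}_0$. The paper splits the domain as in Claim~\ref{claim:notvanishspeed}: Proposition~\ref{th:boundexp} controls $u_{\underline{\sigma}}$ for $x\ge X'$, locally uniform continuity in $\sigma$ controls $u_\sigma$ on the compact middle at time $T$ and on the boundary $\{x=X'\}$ for $t\in[0,T]$, and a parabolic comparison of $u_\sigma$ against $\overline{u}_0$ on $(0,T)\times(X',+\infty)\times\omega$ (using that $u_{0,\sigma}\equiv 0$ there) transfers the tail bound to $u_\sigma$ itself, the far left being handled by the constant part of $\overline{u}_0$. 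You instead control the right tail of $u_\sigma$ through the difference $w_\sigma=u_\sigma-u_{\underline{\sigma}}$ via an $L^1\to L^\infty_{}$ heat-kernel estimate whose Gaussian decay dominates the exponential tail of $\overline{u}_0$, exploiting the $L^1$-continuity of the family and the uniformly compact support of the data (which the nested supports do guarantee, and which, together with $\omega$ bounded, makes the kernel estimate legitimate). Both mechanisms work; the paper's avoids any quantitative smoothing estimate and recycles machinery already in place, while yours makes the dependence on $\|u_{0,\sigma}-u_{0,\underline{\sigma}}\|_{L^1}$ explicit and correctly identifies why locally uniform continuity alone cannot close the argument. One small caveat: your exponent-matching step (choosing $\gamma$ so that $\lambda_\gamma(c)>c/2+\delta$) only applies when $c>2\sqrt{g_+'(0)}$; in the critical case $c=2\sqrt{g_+'(0)}$ the first bullet of Proposition~\ref{th:boundexp} is unavailable and one must instead match the polynomial prefactors, taking the $\beta$ of Proposition~\ref{th:boundexp} strictly smaller than the $\beta$ in the supersolution, as the paper does; the Gaussian part of your estimate is unaffected, so this is only a presentational gap.
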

We argue by contradiction and first assume that $\overline{\sigma}\in\Sigma_1$. Considering $\eta>0$ small, then we can prove that the solution $u_{\overline{\sigma}-\eta}$ again spreads using the same argument as in the proof of Claim~\ref{claim:notspreadspeed}. This contradicts the fact that $\overline{\sigma}$ is the infimum of $\Sigma_1$, and thus $\Sigma_1=(\overline{\sigma},+\infty)$. We omit the details.

Now assume that $\underline{\sigma}\in\Sigma_0$, and again using similar arguments as before (see the proof of Claim \ref{claim:notvanishspeed}) we can show that $u_{\underline{\sigma}+\eta}$ vanishes for $\eta>0$ small enough, leading to a contradiction. Because this part is slightly different, here we give some details for the sake of clarity. We first deal with the case $c>2\sqrt{g'_+(0)}$. Choose $\delta >0$ and $\varepsilon >0$ small enough so that $c>2\sqrt{g'_+(s)+\varepsilon^2}$ for all $s\in[0,\delta]$, as well as $X >0$ large enough so that $f(x,y,s)\leq -\frac{s}{2}$ for all $x<-X$ and $s \leq \|u_{\underline{\sigma}+1}\|_\infty$. Then one can prove that the function
$$\overline{u}_0 (x):= \min \{ \|u_{\underline{\sigma}+1}\|_\infty, \delta e^{-\frac{c+\varepsilon}{2}(x+X)} \}$$
is a supersolution of \eqref{problemmoving}. 
Then the proof proceeds as in Claim~\ref{claim:notvanishspeed}. Using Proposition~\ref{th:boundexp}, the fact that $u_{\underline{\sigma}}$ vanishes and a comparison principle argument on a right half-space, one can find that there exists some $T>0$ such that, for all $\eta \in (0,1)$ small enough,
$$u_\sigma(T,x,y)\leq \overline{u}_0(x).$$
Proceeding as in the end of section \ref{sec:interm}, it follows from the fact that $\overline{u}_0(x)=o(e^{-\frac{c}{2}x})$ as $x\to+\infty$ that the solution of \eqref{problemmoving} with the initial datum $\overline{u}_0$ vanishes, and thus so does $u_\sigma$. We reached a contradiction.

Next, consider the case when $c=2\sqrt{g'_+(0)}<c^*$. Define 
$$\tilde{u}_0(x):= M(1+(x+X)^\beta)e^{-\frac{c}{2} (x+X)}$$
where $M>0$, $\beta \in (0,\frac{1}{2})$ and $X>0$. Define also $X'>0$ such that the support of $u_{0,\underline{\sigma} +1}$ is included in $(-X',X')$, and such that for all $x \leq -X'$ and $y \in \omega$, one has $f(x,y,s) <0$. By the same computations as in the vanishing scenario of section~\ref{sec:interm}, we can choose $X$ large enough (depending on $M$, $\beta$ and $g_+$) such that
$$\overline{u}_0 (x) := \left\{
\begin{array}{ll}
\tilde{u}_0 (-X') & \mbox{ if } x \leq - X' , \vspace{3pt}\\
\tilde{u}_0 (x) & \mbox{ if } x > X',
\end{array}
\right.
$$
is a generalised supersolution of \eqref{problemmoving}.

The proof then proceeds similarly as in Claim~\ref{claim:notvanishspeed}. There exists $T>0$ and $\eta >0$ such that for all $\sigma \in (\underline{\sigma},\underline{\sigma}+\eta)$, 
$$\forall \: |x|< X', \ y\in \omega, \quad u_{\sigma} (T,x,y) \leq \overline{u}_0 (x),$$
as well as
$$\forall \: t\in (0,T), \ y \in \omega, \quad  u_\sigma (t,\pm X',y) \leq \overline{u}_0 (X').$$
Applying a parabolic comparison principle in $(0,T) \times (X',+\infty) \times \omega$, as well as in $(0,T) \times (-\infty,X') \times \omega$ (which was not necessary in the proof of Claim~\ref{claim:notvanishspeed}), it is straightforward that the inequality $u_\sigma (T,\cdot) \leq \overline{u}_0$ holds in the whole spatial domain. Due to the fact that $\overline{u}_0(x) = o((1+\sqrt{x})e^{-\frac{c}{2}x })$ as $x \to +\infty$, one can again infer than vanishing occurs for all $\sigma < \underline{\sigma}+\eta$, a contradiction.\\

Claim~\ref{claim:sharpinit} is proved. We now know that $\Sigma_0 = (0,\underline{\sigma})$ and $\Sigma_1 = (\overline{\sigma},+\infty)$, and therefore (remember Proposition~\ref{prop:behavInter}) for all $\sigma \in [\underline{\sigma},\overline{\sigma}]$ the solution $u_\sigma$ is grounded in the sense of Definition~\ref{def:dyn} . It only remains to show that $\underline{\sigma} = \overline{\sigma}$. This immediately follows from the next claim, which will thus conclude the proof of Theorem~\ref{th:threshold_u0}:
\begin{claim}\label{last}
Let $0< \sigma_1 \leq \sigma_2< +\infty$ such that $u_{\sigma_1}$ and $u_{\sigma_2}$ are grounded. Then $\sigma_1 = \sigma_2$.
\end{claim}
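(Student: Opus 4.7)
Suppose by contradiction that $\sigma_1 < \sigma_2$ with both $u_{\sigma_1}$ and $u_{\sigma_2}$ grounded. The plan is to extract along a time sequence $t_n \to +\infty$ two stationary ground states $p_1 \leq \not\equiv p_2$ of~\eqref{problemmoving} satisfying the uniform exponential bound of Proposition~\ref{th:boundexp}, and apply Proposition~\ref{othercritical} for a contradiction. First, the hypothesis $u_{0,\sigma_1} < u_{0,\sigma_2}$ on $\mathrm{supp}(u_{0,\sigma_1})$ together with the parabolic strong maximum principle applied to $w := u_{\sigma_2} - u_{\sigma_1}$, which is nonnegative nontrivial and satisfies the linear parabolic equation
\[
\partial_t w - \Delta w - c \partial_x w = \left( \int_0^1 \partial_s f(x, y, u_{\sigma_1} + \theta w)\, d\theta \right) w
\]
with bounded coefficient, yields $u_{\sigma_1}(t, \cdot) < u_{\sigma_2}(t, \cdot)$ in $\Omega$ for every $t > 0$.

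Both solutions enjoy the uniform in time exponential decay of Proposition~\ref{th:boundexp} as $x \to +\infty$, and Lemma~\ref{pratique} combined with Proposition~\ref{p+c_exists} forces decay to $0$ as $x \to -\infty$ (since $p_c^+ \to 0$ there). Together with boundedness of $\omega$ and parabolic interior estimates, this makes $\{u_{\sigma_i}(t + \cdot, \cdot)\}_{t \geq 1}$ precompact in $C^{1,2}_{\mathrm{loc}}([0,+\infty) \times \overline\Omega)$. Diagonal extraction along a common sequence $t_n \to +\infty$ produces bounded entire solutions $\tilde u_1 \leq \tilde u_2$ of \eqref{problemmoving} inheriting the exponential bound, and strictly positive by the grounding assumption.

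It then suffices to arrange $\tilde u_i \equiv p_i$ stationary and $p_1 \not\equiv p_2$. For stationarity when $c > 2\sqrt{g_+'(0)}$, the bound from Proposition~\ref{th:boundexp} with exponent $\lambda_\gamma(c) > c/2$ makes the weighted energy $E_c[u] = \int_{\R \times \omega} e^{cx}\bigl(\tfrac{1}{2} |\nabla u|^2 - F(x,y,u)\bigr)\, dx\,dy$ finite on grounded solutions; since \eqref{problemmoving} is the gradient flow generated by $E_c$ and $\omega$ is bounded, $E_c[u_{\sigma_i}(t, \cdot)]$ is nonincreasing and bounded below, hence $\int_0^{+\infty}\! \int e^{cx}(\partial_t u_{\sigma_i})^2\,dt\,dx\,dy < \infty$ and along a suitable subsequence $\partial_t \tilde u_i \equiv 0$. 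For distinctness, one chooses $t_n$ realizing $\limsup_{t \to +\infty}(u_{\sigma_2} - u_{\sigma_1})(t, 0, y_0)$ for some fixed $y_0 \in \omega$: if this limsup is positive then $p_1(0, y_0) < p_2(0, y_0)$, otherwise $w(t, \cdot) \to 0$ uniformly on compacts and one rules this out by combining the strict positivity of $w(1, \cdot)$ on a fixed compact with the parabolic Harnack inequality and the uniform boundedness of the zero-order coefficient. Once the pair $p_1 \leq \not\equiv p_2$ is in hand, the strong maximum principle on bounded $\omega$ yields $\inf_{y \in \omega}(p_2(0, y) - p_1(0, y)) > 0$ and Proposition~\ref{othercritical} delivers the contradiction. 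The main obstacle I foresee is the borderline case $c = 2\sqrt{g_+'(0)}$: the polynomial correction $(1+x^\beta)$ in Proposition~\ref{th:boundexp} breaks the weighted integrability of $e^{cx} u^2$, so the energy argument does not apply directly; one must then obtain stationarity by another route, for instance via a modified weighted functional exploiting the strict inequality $c < c^*$, or via a time-averaging argument that bypasses finiteness of $E_c$.
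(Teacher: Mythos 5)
Your overall strategy for $c>2\sqrt{g_+'(0)}$ (energy boundedness via Proposition~\ref{th:boundexp}, convergence along a time sequence to stationary ground states, then Proposition~\ref{othercritical}) is the same as the paper's, but the step where you produce \emph{distinct} ordered ground states has a genuine gap. Your fallback when $\limsup_{t\to+\infty} w(t,0,y_0)=0$ --- ruling out $w(t,\cdot)\to 0$ via the parabolic Harnack inequality and the positivity of $w(1,\cdot)$ --- does not work: Harnack only compares $\sup$ and $\inf$ of $w$ over compact sets at comparable times, and is perfectly consistent with $w$ decaying to zero as $t\to+\infty$ (e.g.\ $e^{-t}\phi$ solves a linear parabolic equation with bounded zero-order coefficient). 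In fact there is no need to exclude $p_1\equiv p_2$: the paper \emph{accepts} $p_1\equiv p_2$ (this is exactly what Proposition~\ref{othercritical} plus the strong maximum principle forces) and instead gets the contradiction from a spatial shift. Since the hypotheses of Theorem~\ref{th:threshold_u0} give a strict ordering on the support, there is $\zeta>0$ with $u_{0,\sigma_1}(\cdot-\zeta,\cdot)\le u_{0,\sigma_2}$, and because right-shifts are subsolutions (monotonicity of $f$ in $x$), passing to the limit yields $p_1(\cdot-\zeta,\cdot)\le p_2\equiv p_1$; iterating the shift contradicts the positivity of $p_1$ together with its decay as $x\to+\infty$. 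This shift mechanism is the missing idea in your write-up, and it is also the reason the strict ordering hypothesis on the family $(u_{0,\sigma})$ is used at all.

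The second gap is the critical case $c=2\sqrt{g_+'(0)}$, which you correctly identify as a problem but leave unresolved. The paper does not try to recover stationarity there; it passes to \emph{entire} (time-dependent) solutions $u_{\sigma_1}^\infty\le u_{\sigma_2}^\infty$ along a time sequence, which still satisfy the uniform exponential bounds and a uniform positivity at $x=0$, and then runs the same sliding argument directly on these entire solutions: one defines $\zeta^*=\max\{\zeta>0\,:\,u_{\sigma_1}^\infty(\cdot,\cdot-\zeta,\cdot)\le u_{\sigma_2}^\infty\}\ge\overline{\zeta}>0$ and derives a contradiction using the strong maximum principle (for bounded touching points), compactness in $t$ (for $|t_n|\to\infty$), and the parabolic maximum principle Lemma~\ref{lemma:paramaxprinciple} on the right half-space together with the negativity of $\partial_s f$ near $0$ on the left (for $|x_n|\to\infty$). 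So a parabolic analogue of Proposition~\ref{othercritical} is what replaces the energy argument, rather than a modified weighted functional or time-averaging.
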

Let us first note that, when $c > 2 \sqrt{g_+ '(0)}$, then Proposition~\ref{th:boundexp} implies that both solutions $u_{\sigma_1}$ and $u_{\sigma_2}$ have bounded energy. By a classical argument, this guarantees that there exists a time sequence $t_n \to +\infty$ such that $u_{\sigma_1} (t_n,\cdot)$ and $u_{\sigma_2} (t_n,\cdot)$ converge to some positive stationary states $p_1$ and $p_2$. Moreover, because $u_{0,\sigma_1} \leq u_{0,\sigma_2}$, we have by the comparison principle that $p_1 \leq p_2$. According to Proposition~\ref{othercritical} and the strong maximum principle, we get that $p_1 \equiv p_2$. Now assume by contradiction that $\sigma_1 < \sigma_2$. Then, the hypotheses of Theorem~\ref{th:threshold_u0} imply that there exists some small $\zeta >0$ such that $u_{0,\sigma_1} (x-\zeta,y) < u_{0,\sigma_2 }(x,y)$ for all $x \in \mathbb{R}$ and $y \in \omega$. Because $u_{\sigma_1} (\cdot, \cdot - \zeta,\cdot)$ is a subsolution of \eqref{problemmoving}, it follows that $p_1 (\cdot - \zeta, \cdot) \leq p_2 (\cdot) \equiv p_1 (\cdot)$. This is clearly impossible since $p_1$ is positive and tends to 0 as $x \to +\infty$, thus $\sigma_1 = \sigma_2$.

Unfortunately, this argument fails in the critical case $c= 2\sqrt{g_+ '(0)}$ because Proposition~\ref{th:boundexp} is not sufficient to bound the energy of the solution uniformly in time. Therefore, we are not able to prove that the solution converges (even along a time sequence) to a stationary state and cannot apply Proposition~\ref{othercritical}. However, we will use a similar argument which relies on the fact that Proposition~\ref{othercritical} can be extended in some sense to entire solutions of the evolution equation \eqref{problemmoving}.

We again argue by contradiction and consider $\sigma_1<\sigma_2$ such that grounding occurs. By parabolic estimates, one can find a sequence $t_n\to+\infty$ as $n\to+\infty$ such that $u_{\sigma_1}(t+t_n,x,y)$ and $u_{\sigma_2}(t+t_n,x,y)$ converge respectively to $u_{\sigma_1}^\infty(t,x,y)$ and $u_{\sigma_2}^\infty(t,x,y)$ as $n \to +\infty$, locally uniformly with respect to $t$, $x$ and~$y$. Moreover, $u_{\sigma_1}^\infty$ and $u_{\sigma_2}^\infty$ are positive and bounded entire solutions of~\eqref{problemmoving}, which also satisfy
\begin{equation}\label{endd1}
u_{\sigma_1}^\infty (t,x,y) , u_{\sigma_2}^\infty (t,x,y) \to 0 \quad \mbox{ as } x \to +\infty \  \mbox{uniformly in $t \in \mathbb{R}$ and $y \in \omega$}
\end{equation}
and 
\begin{equation}\label{endd2}
\inf_{t \in \mathbb{R} , y \in \omega} u_{\sigma_1}^\infty (t,0,y), u_{\sigma_2}^\infty (t,0,y) >0.
\end{equation}
Using the assumptions of Theorem \ref{th:threshold_u0}, there exists $\overline{\zeta}>0$ such that for all $\zeta\in[0,\overline{\zeta})$, $u_{0,\sigma_1}(x-\zeta,y)\leq u_{0,\sigma_2}(x,y)$ for all $x\in\R$, $y\in\omega$. 
Due to the monotonicity of $f$ with respect to $x$ we know that $u_{\sigma_1}(\cdot, \cdot-\zeta,\cdot)$ is a subsolution of \eqref{problemmoving} for all $\zeta \geq 0$.  Therefore, for all $\zeta \in [0,\overline{\zeta})$, we get that $u_{\sigma_1}^\infty (\cdot, \cdot - \zeta, \cdot) \leq u_{\sigma_2}^\infty (\cdot)$. We can define (as in the proof of Proposition \ref{othercritical}), 
$$\zeta^*:= \max\{\zeta>0\:|\: u_{\sigma_1}^\infty(\cdot,\cdot-\zeta,\cdot)\leq u_{\sigma_2}^\infty(\cdot)\} \geq \overline{\zeta} >0.$$
By continuity, we have that $u_{\sigma_1}^\infty (\cdot,\cdot - \zeta^* ,\cdot) \leq u_{\sigma_2}^\infty (\cdot)$ and $\inf u_{\sigma_2}^\infty (\cdot) - u_{\sigma_1}^\infty (\cdot, \cdot - \zeta^*,\cdot) =0$. In particular, there exists a sequence $(t_n,x_n,y_n)$ such that $u_{\sigma_1}^\infty(t_n,x_n-\zeta^*,y_n)-u_{\sigma_2}^\infty(t_n,x_n,y_n)\to 0$ as $n\to+\infty$.

Since we assume $\omega$ to be bounded here, we have without loss of generality that $y_n \to y_\infty \in \overline{\omega}$ as $n \to +\infty$. Consider first the case when the sequence $(t_n,x_n)$ is also bounded. Then there exists some point $(t_\infty,x_\infty,y_\infty)$ where $u_{\sigma_1}^\infty (t_\infty, x_\infty - \zeta^*,y_\infty) = u_{\sigma_2}^\infty (t_\infty,x_\infty,y_\infty)$, and the strong maximum principle together with Hopf lemma imply that $u_{\sigma_1}^\infty(\cdot, \cdot-\zeta^*,\cdot)\equiv u_{\sigma_2}^\infty(\cdot)$. Putting this together with~\eqref{endd1} and~\eqref{endd2} which show that $u_{\sigma_1}^\infty$ non trivially depends on~$x$, this contradicts the fact that $\zeta^* >0$.

The case when $x_n$ is bounded but $|t_n| \to +\infty$ can be dealt with similarly. Indeed, one can extract a subsequence such that $u_{\sigma_1}^\infty (t+t_n,x-\zeta^*,y)$ and $u_{\sigma_2}^\infty (t+t_n,x,y)$ converge to some entire solutions of \eqref{problemmoving} which again satisfy~\eqref{endd1} and~\eqref{endd2}. With some slight abuse of notations, one may still denote those limits by $u_{\sigma_1}^\infty (\cdot,\cdot -\zeta^*,\cdot)$ and $u_{\sigma_2}^\infty$, and repeat the argument above to reach a contradiction.

It remains to consider the situation when $|x_n|\to+\infty$. From the above, we know that for any $X>0$, 
$$\inf_{t\in\R,\:x\in[-X,X],\:y\in\omega} u_{\sigma_1}^\infty(t,x-\zeta^*,y)-u_{\sigma_2}^\infty(t,x,y)>0. $$ As in the proof of Proposition \ref{othercritical}, choose $\varepsilon>0$ small enough such that 
$$\inf_{t\in\R,\:x\in[-X,X],\:y\in\omega} u_{\sigma_1}^\infty(t,x-\zeta^*-\varepsilon,y)-u_{\sigma_2}^\infty(t,x,y)>0.$$
By Proposition~\ref{th:boundexp}, we have that $u_{\sigma_2}^\infty$ satisfies the appropriate exponential bound in order to apply the parabolic maximum principle Lemma~\ref{lemma:paramaxprinciple}. Therefore, we get that $u_{\sigma_1}^\infty(t,x-\zeta^* -\varepsilon,y)\leq u_{\sigma_2}^\infty(t,x,y)$ for all $t\in\R$, $x\geq X$ and $y\in\omega$. Moreover, up to increasing $X$ so that $f(x,y,s) \leq -\frac{s}{2}$ for all $x \leq - X$, $y \in \omega$ and $s \leq \|u_{\sigma_2}\|_\infty$, one can apply a classical maximum principle to conclude first that $u_{\sigma_2}^\infty (t,-\infty,y) =0$, and then that $u_{\sigma_1}^\infty(t,x-\zeta^* -\varepsilon,y)\leq u_{\sigma_2}^\infty(t,x,y)$ also for all $x\leq -X$, $t\in\R$ and $y\in\omega$. We have shown that $u_{\sigma_1}^\infty (\cdot, \cdot - \zeta^* - \varepsilon, \cdot) - u_{\sigma_2}^\infty (\cdot)$ in the whole domain $\R \times \R \times \omega$. This contradicts the definition of $\zeta^*$ and ends the proof of Claim~\ref{last}, as well as of Theorem~\ref{th:threshold_u0}.

\section{The case of exponentially decaying initial data}\label{sec:last}

In this last section, we consider an initial datum $u_0$ such that
$$u_0 (x,\cdot) \sim A e^{-\alpha x}$$
as $x \to +\infty$, where $A >0$ and $\alpha >0$.

As one may expect, Theorem~\ref{th:last} mostly follows from comparisons with the solution~$v$ of the homogeneous monostable equation
$$\partial_t v - \Delta v - g_+ (v) =0,$$
which we pose in $\mathbb{R} \times \omega$ (with Neumann boundary condition if necessary) and supplement with the initial datum $u_0$. Note though that Theorem~\ref{th:last} does not address the sharpness of the thresholds between extinction and spreading, which is why here we can avoid most of the technical difficulties of previous sections. Moreover, we will only sketch the arguments, which indeed share similarities to those we used in the previous sections.

It is well known that, if $\alpha \leq \alpha^* := \frac{c^* - \sqrt{c^* - 4 g'_+ (0)}}{2}$ (resp. $\alpha > \alpha^*$) then $v$ spreads with speed $c_\alpha= \frac{\alpha^2 + g_+ '(0)}{\alpha}$ (resp. $c^*$). We refer for instance to \cite{rothe,uchi78}. In particular we have that
$$\limsup_{t \to +\infty} \sup_{x \geq ct,y \in \omega } v (t,x,y) = 0$$
for any $c > c_\alpha$ (resp. $c > c^*$). In the case when $\alpha < \alpha^*$, this can be understood from the fact that the travelling wave with speed $c_\alpha$ also decays as the exponential function $e^{-\alpha x}$ as $x \to +\infty$, and thus it can be used as a supersolution. Note that in the literature, it is often assumed that $u_0 <1$, but it is standard to remove this assumption as we did for instance in Section~\ref{sec:high}. Since $v (x+ct,y)$ is a supersolution of \eqref{problemmoving} and using also Lemma~\ref{pratique}, we infer that vanishing occurs when either $\alpha  \leq \alpha^*$ and $c > c_\alpha$, or $\alpha > \alpha^*$ and $c > c^*$.\medskip

Next, let us note that if $c < 2 \sqrt{g'_+ (0)}$, then spreading occurs as an immediate consequence of Theorem~\ref{th:regime}. We now show that, if $\alpha < \sqrt{g_+ '(0)}$ and $c \in [2 \sqrt{g'_+ (0)}, c_\alpha)$, then the solution also spreads.

Because $f$ converges to $g_+$ as $x \to +\infty$ in the $C^{1,r}$-topology, we can find a monostable function $g_\delta$ such that $g_\delta ' (0)$ is smaller than but arbitrarily close to $g'_+ (0)$, and $f (x,y,s) \geq g_\delta (s)$ for all $x \geq X_\delta$, $y\in \omega$ and $s \geq 0$. In particular, $c > 2 \sqrt{g'_\delta (0)}$ and we can choose $\varepsilon >0$ small enough so that the smallest solution~$\alpha_1 (c')$ of
$$\alpha_1 (c')^2 - c' \alpha_1  (c')+ g'_\delta (0) = 0$$
satisfies $\alpha_1 (c') > \alpha$ for any $c' \in [c,c+\varepsilon)$. Moreover, one can check that
$$\underline{u} (t,x) = \max \{0, a e^{-\alpha_1(c') (x-X_\delta + (c-c')t)} - b e^{-(\alpha_1 (c') + \eta) (x-X_\delta + (c-c')t)} \}$$
is a subsolution of \eqref{problemmoving}, where $\eta >0$, $a>0$ and $b>0$ have to be chosen small enough but independently of $c' \in [c, c+\varepsilon)$. In particular, we can assume without loss of generality that $u_0 \geq \underline{u} (t=0)$. Thus, one gets by comparison that the solution~$u$ of \eqref{problemmoving} with initial datum $u_0$ not only does not vanish, but also satisfies that
$$\liminf_{x \to +\infty} \inf_{y \in \omega} \lim_{t \to +\infty} u (t,x,y) > 0.$$
From this, we infer that spreading occurs. Indeed, the solution of \eqref{problemmoving} with initial datum $\underline{u} (t=0)$ is increasing in time, and since it is also bounded it must converge (locally uniformly) to a stationary state~$p$. According to Proposition~\ref{otherstates}, we have that $p \equiv p_c^+$. By the comparison principle, it easily follows that~$u$ also converges to~$p_c^+$.\medskip

We now investigate the intermediate speed ranges where both spreading and vanishing occur. First, it follows from Theorem~\ref{th:regime} that for any $c < c^*$ and $\alpha >0$, one can find an initial datum $u_0$ behaving as $Ae^{-\alpha x}$ as $x \to +\infty$ such that spreading occurs. It remains to show that, when either $\alpha \in (\alpha^*, \sqrt{g'_+ (0)})$ and $c \in (c_\alpha, c^*)$, or $\alpha \geq \sqrt{g'_+ (0)}$ and $c \in (2 \sqrt{g'_+ (0)},c^*)$, one can also find an exponentially decaying initial datum~$u_0$ so that the solution vanishes.

The argument is similar to the one in section~\ref{sec:interm}, hence we only sketch it. Consider the former case and let $c' \in (c_\alpha, c)$. Then define
$$\overline{u} (x) = \left\{
\begin{array}{ll}
e^{-\alpha X} & \mbox{ if } x \leq - X,\\
 e^{-\alpha (x+ 2X)} & \mbox{ if } x \geq - X.\\
\end{array}
\right.
$$
One can check that, if $X$ is chosen large enough, then $\overline{u} (x+ (c-c')t)$ is a supersolution of~\eqref{problemmoving}. Together with Lemma~\ref{pratique}, this implies that the corresponding solution vanishes. In the latter case $\alpha \geq \sqrt{g'_+(0)}$, defining instead
$$\overline{u} (x) = \left\{
\begin{array}{ll}
e^{-\sqrt{g'_+(0)} X} & \mbox{ if } x \leq - X,\\
 e^{-\sqrt{g'_+(0)} (x+ 2X)} & \mbox{ if } x \geq - X,\\
\end{array}
\right.
$$
and repeating the same argument also lead to a vanishing scenario for any $c > \sqrt{g'_+ (0)}$.

All the situations in Theorem~\ref{th:last} have been dealt with and the proof is finally complete.

\appendix
\section{Maximum principle lemmas}\label{A:maxpple}

\subsection{An elliptic maximum principle}

In our proofs, we use extensively the following lemma:
\begin{Lemma}\label{lemma:maxprinciple}
Assume that $c \geq 2\sqrt{g_+ '(0)}$. Let $\underline{\phi} (x,y)$ and $\overline{\phi}(x,y)$ be two nonnegative functions, respectively a sub and a supersolution of~\eqref{problemmoving}, and assume that they both decay to 0 at $x \to +\infty$ as
$$o(e^{-\frac{c}{2} x})\quad  \left( \mbox{if }c > 2 \sqrt{g_+ '(0)} \, \right),$$
$$o((1+\sqrt{x}) e^{-\frac{c}{2} x})\quad  \left( \mbox{if }c = 2 \sqrt{g_+ '(0)} \, \right),$$
where these estimates are understood to be uniform with respect to $y \in \omega$.

Then there exists $X>0$ large enough such that, for any $X ' \geq X$, if $\underline{\phi} (X',\cdot) \leq \overline{\phi} (X',\cdot)$, then either $\underline{\phi} \equiv \overline{\phi}$ or $\underline{\phi} (x,y) < \overline{\phi} (x,y)$ for all $(x,y) \in (X' ,+\infty) \times \omega$.
\end{Lemma}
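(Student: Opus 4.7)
The plan is to analyse the difference $w := \overline{\phi} - \underline{\phi}$ directly. Subtracting the sub- and supersolution inequalities and writing $f(\cdot,\cdot,\overline{\phi})-f(\cdot,\cdot,\underline{\phi}) = b(x,y)\,w$ via the mean value theorem yields
\begin{equation*}
\Delta w + c\,\partial_x w + b(x,y)\,w \leq 0
\end{equation*}
on $(X',+\infty)\times\omega$ (with homogeneous Neumann condition on $\mathbb{R}\times\partial\omega$ when $\omega$ is bounded), where the coefficient $b(x,y) := \int_0^1 \partial_s f\bigl(x,y,\underline{\phi}+tw\bigr)\,dt$ is bounded by Hypothesis~A(2) and, since $\underline{\phi},\overline{\phi}\to 0$ as $x\to+\infty$, converges to $g_+'(0)$ uniformly in~$y$ by Hypothesis~A(3). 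A weak maximum principle on the half-strip, combined with the strong maximum principle and the Hopf lemma, will then deliver the dichotomy $w \equiv 0$ or $w > 0$.

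The key device in the non-critical regime $c>2\sqrt{g_+'(0)}$ is the exponential change of unknown $v:=e^{(c/2)x}w$, after which $w$ satisfies
\begin{equation*}
\Delta v + \Bigl(b(x,y)-\tfrac{c^2}{4}\Bigr)\,v \leq 0.
\end{equation*}
Here the zeroth-order coefficient converges uniformly to the strictly negative constant $g_+'(0)-c^2/4$, the hypothesis $\phi=o(e^{-(c/2)x})$ ensures $v\to 0$ uniformly as $x\to+\infty$, and $v(X',\cdot)\geq 0$ by assumption. Choosing $X$ large enough that $b-c^2/4\leq-\delta<0$ on $[X,+\infty)\times\omega$, a standard weak maximum principle with non-positive zeroth-order coefficient forces $v\geq 0$; the strong maximum principle (plus the Hopf lemma on the Neumann boundary in the bounded case) then gives the dichotomy for $w$. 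In the case $\omega=\mathbb{R}^{N-1}$, a negative infimum of $v$ might only be approached along a sequence with $|y_n|\to+\infty$, in which case I would translate in~$y$ and extract a limit via standard interior elliptic estimates, obtaining a limiting function with a negative interior minimum satisfying a limiting inequality with the same sign structure---contradicting the strong maximum principle.

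The genuine obstacle is the critical case $c=2\sqrt{g_+'(0)}$, where the same transformation gives the \emph{vanishing} limit $g_+'(0)-c^2/4=0$, so no strictly negative coefficient is available a priori. To recover one, I would compose the exponential weight with a polynomial weight and work with $\tilde v := v/(1+x)^\gamma$ for some $\gamma\in(1/2,1)$. The refined decay hypothesis $\phi=o((1+\sqrt{x})e^{-(c/2)x})$ is precisely what forces $\tilde v\to 0$ as $x\to+\infty$, and a direct computation shows that the zeroth-order coefficient in the equation for $\tilde v$ acquires a negative corrector $\gamma(\gamma-1)(1+x)^{-2}$. The technical point is then to ensure that this corrector dominates the remainder $b-c^2/4$ on $[X,+\infty)\times\omega$ for $X$ sufficiently large; this is where the strengthened decay in the critical hypothesis and the uniform convergence $b(x,y)\to g_+'(0)$ must be exploited jointly. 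Once the sign condition is secured, the non-critical argument applies verbatim and produces the desired conclusion.
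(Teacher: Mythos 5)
Your overall strategy coincides with the paper's: an exponential lift $e^{(c/2)x}$, an extra algebraic weight in the critical case, a weak maximum principle on the right half-strip closed off by the decay at $x=+\infty$ (with a translation-and-compactness step in $y$ when $\omega=\R^{N-1}$), and then the strong maximum principle and Hopf lemma for the dichotomy. The supercritical case $c>2\sqrt{g_+'(0)}$ is complete as you wrote it, since there plain uniform convergence of $b(x,y)$ to $g_+'(0)$ suffices to make the zeroth-order coefficient $\leq -\delta<0$ for $x$ large.

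The critical case, however, contains a genuine gap, and it sits exactly at the point you flag as ``the technical point''. After the change of unknown $\tilde v = e^{(c/2)x}w/(1+x)^\gamma$, the sign condition you need is $\gamma(\gamma-1)(1+x)^{-2} + b(x,y) - g_+'(0) \leq 0$ for $x\geq X$. The tools you invoke --- the strengthened decay of $\underline\phi,\overline\phi$ and the \emph{uniform convergence} $b(x,y)\to g_+'(0)$ --- cannot deliver this: Hypotheses~A give no rate for the convergence $\partial_s f(x,y,\cdot)\to g_+'(\cdot)$, so $b-g_+'(0)$ could tend to $0$ like $1/\log x$ and would then overwhelm the $O(x^{-2})$ corrector. (The refined decay hypothesis only controls where $\tilde v\to 0$; it says nothing about the rate at which $b$ approaches $g_+'(0)$.) The missing ingredient is a \emph{one-sided, quantitative} bound: by Hypothesis~A(1), $f(x,y,s)$ is nondecreasing in $x$, hence $f(x,y,s)\leq g_+(s)$ for all $x$, and since $f(x,y,0)=0=g_+(0)$ this yields the exact inequality $\partial_s f(x,y,0)\leq g_+'(0)$ (no error term). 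Combining this with the uniform $C^{0,r}$ bound on $\partial_s f$ from Hypotheses~A(2)--(3) gives $\partial_s f(x,y,s)\leq g_+'(0)+\eta s^{r}$ for $x\geq X$ and $s\in[0,\delta]$, whence $b(x,y)-g_+'(0)\leq \eta\,\bigl(\underline\phi+\overline\phi\bigr)^{r}=o\bigl(e^{-rcx/4}\bigr)$, which is indeed negligible against $\gamma(\gamma-1)(1+x)^{-2}$. This is precisely how the paper closes the argument (it phrases the input as $C^{0,r}$ convergence of $\partial_s f(x,y,\cdot)$ to $g_+'(\cdot)$). With that estimate inserted, the rest of your critical-case argument goes through.
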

Note first that when $c < 2 \sqrt{g_+'(0)}$, then we prove in Section~\ref{sec:3speed} that 0 is unstable with respect to compact perturbations so that there does not exist any supersolution decaying to 0 at infinity. Therefore, it is meaningful to only consider the case $c \geq 2 \sqrt{g_+ '(0)}$.

Moreover, if instead of our assumptions, one chooses $f$ to have a negative derivative with respect to its third variable, then the lemma reduces to the usual maximum principle. In particular, one can clearly derive a similar property on the far left of our problem. Because $\partial_s f(x,y,0) >0$ for large positive $x$ and all $y \in \omega$, the situation is rather different on the far right. However, it is still quite standard that the maximum principle remains valid in an appropriate subspace of fast decaying functions, in which 0 is again stable. This is exactly Lemma~\ref{lemma:maxprinciple}, whose proof we include below for the sake of completeness.

\begin{proof}
Let us first assume that $c > 2 \sqrt{g_+'(0)}$. Let $\varepsilon>0$ be small so that $c > 2 \sqrt{g_+'(0) + 2 \varepsilon}$. We consider $\delta>0$ and $X>0$ such that, for all $x \geq X$, $y\in\omega$ and $s \in [0,\delta]$, one has
$$\partial_s f(x,y,s) \leq g_+'(0) + \varepsilon ,$$
and for all $x \geq X$ and $y \in \omega$,
$$\underline{\phi} (x,y), \, \overline{\phi} (x,y) \leq \delta.$$
Then $\psi := e^{\frac{c}{2}x } \left( \underline{\phi} - \overline{\phi}\right)$ satisfies, when $x \geq X$:
\begin{equation*}
 \Delta \psi - \frac{c^2}{4} \psi + e^{\frac{c}{2} x} \left[ f (x,y,\underline{\phi}) - f (x, y,\overline{\phi})  \right] \geq 0.
\end{equation*}
Proceed by contradiction and assume that $\psi >0$ at some point $(x_0,y_0)$ with $x_0 > X'\geq X$. From our choice of $X$, we have wherever $\psi >0$ that
$$f(x,y,\underline{\phi}) - f (x,y,\overline{\phi}) \leq (g_+ ' (0) + \varepsilon) (\underline{\phi} - \overline{\phi}) < \left(\frac{c^2}{4} - \varepsilon \right) e^{-\frac{c}{2} x} \psi ,$$
hence $\Delta \psi > \varepsilon \psi$. Noting that $\lim_{x \to +\infty} \psi (x,y) =0$ (uniformly with respect to $y$), we can assume without loss of generality that 
$$\psi (x_0,y_0) = \max_{x \geq X', y \in \omega} \psi (x,y),$$
and thus $\Delta \psi (x_0,y_0) \leq 0$, a contradiction. When $\omega = \R^{N-1}$, this may require passing to the limit in a sequence of appropriate shifts of $\psi$. We conclude that $\underline{\phi} \leq \overline{\phi}$ in $(X',+\infty) \times \omega$. By the strong maximum principle, either $\underline{\phi} \equiv \overline{\phi}$ or this inequality is also strict.\medskip

The case $c = 2 \sqrt{g_+ '(0)}$ can be dealt with similarly, by defining instead
$$\psi := \frac{e^{\frac{c}{2} x}}{1+\sqrt{x}} (\underline{\phi} - \overline{\phi}),$$
which satisfies
$$\Delta \psi  + \frac{1}{\sqrt{x}(1+\sqrt{x})} \partial_x  \psi + \left( \frac{1}{4x^{3/2}}- \frac{3}{4x}  \right)\frac{1}{(1+\sqrt{x})^2} \psi+ \frac{e^{\frac{c}{2} x}}{1+\sqrt{x} } \left( f(x,y,\underline{\phi} ) - f (x,y,\overline{\phi}) - g_+'(0) (\underline{\phi}-\overline{\phi}) \right) \geq 0 $$
for all $ x>0$. Because of the critical choice of $c$, here we use the fact that $\partial_s f(x,y,s)$ converges not only locally uniformly in $s$ to $g_+ ' (s)$ as $x \to +\infty$, but also in the $C^{0,r}$ topology where $0<r<1$. Thus one can find $\delta >0$, $X>0$ and $\eta >0$ such that for all $x \geq X$, $y \in \omega$ and $s \in [0,\delta]$, one has
$$\partial_s f(x,y,s) \leq g_+ ' (0)  + \eta s^{r}.$$
Then, when $\psi >0$ and $x$ large enough, we get
\begin{eqnarray*}
& &   \frac{e^{\frac{c}{2} x}}{1+\sqrt{x} } \left(f(x,y,\underline{\phi} ) - f (x,y,\overline{\phi}) - g_+ '(0) (\underline{\phi}-\overline{\phi}) \right) \vspace{3pt} \\
& = & O\left( \frac{e^{\frac{c}{2} x}}{1+\sqrt{x} }(\underline{\phi}-\overline{\phi})^{1+r}\right)\\
& = & O \left(  (\underline{\phi}-\overline{\phi})^r \psi \right) \vspace{3pt}\\
& = & o \left( \psi (1+\sqrt{x})^re^{-r\frac{c}{2}x}\right)\\
& = &  o\left( \frac{\psi}{x(1+\sqrt{x})^2} \right).
\end{eqnarray*}
Proceeding as above, we assume that there exists $(x_0,y_0)\in\Omega$ such that $x_0>X'\geq X$ and $\psi(x_0,y_0)>0$ as well as $\lim_{x\to+\infty} \psi(x,y)=0$ uniformly with respect to y. Then we apply the maximum principle to reach the conclusion.
\end{proof}
The interest of this lemma is that a solution of \eqref{problemmoving} satisfies such exponential bounds when it does not spread: see Proposition~\ref{th:boundexp}. As we cannot a priori exclude the possibilty that the solution converges to nonstationary solutions as time goes to infinity, we also provide below a parabolic version of this maximum principle.

\subsection{A parabolic maximum principle}

\begin{Lemma}\label{lemma:paramaxprinciple}
Assume $c \geq 2 \sqrt{g_+'(0)}$. Assume that $\underline{u}$ is a global in time subsolution, and $\overline{u}$ a global in time supersolution, satisfying the exponential estimates from Lemma \ref{lemma:maxprinciple} as $x\to+\infty$, uniformly with respect to $t \in \mathbb{R}$ and $y\in \omega$.

Then there exists $X>0$ large enough such that, for any $X' \geq X$, if $u_1 (\cdot,X',\cdot) \leq u_2 (\cdot, X',\cdot)$, then $\underline{u} \leq \overline{u}$ in $\mathbb{R} \times (X',+\infty)\times \omega$.
\end{Lemma}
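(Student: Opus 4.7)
The plan is to mirror the argument of Lemma~\ref{lemma:maxprinciple}, but in the parabolic setting, by reducing to an inequality with a positive zeroth-order coefficient via an exponential weight, and then ruling out a positive supremum by a shift-and-limit compactness argument.

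First, in the case $c > 2\sqrt{g_+'(0)}$ we define $\psi(t,x,y) := e^{\frac{c}{2}x}\bigl(\underline{u}(t,x,y) - \overline{u}(t,x,y)\bigr)$. A direct computation, using the sub/supersolution inequalities and canceling the first-order terms thanks to the choice of weight, gives in the region where $\psi>0$:
\begin{equation*}
\partial_t \psi - \Delta \psi + \tfrac{c^2}{4}\psi \;\leq\; e^{\frac{c}{2}x}\bigl(f(x,y,\underline{u}) - f(x,y,\overline{u})\bigr).
\end{equation*}
Choosing $X$ so large that both $\underline{u}$ and $\overline{u}$ are uniformly small enough on $\{x \geq X\}$ (this is where the exponential decay hypothesis enters), and picking $\varepsilon>0$ small so that $\partial_s f \leq g_+'(0)+\varepsilon$ on the relevant range of $s$ with $\tfrac{c^2}{4} - g_+'(0) - \varepsilon > \varepsilon$, the right-hand side is bounded by $(g_+'(0)+\varepsilon)\psi$ wherever $\psi>0$, yielding
\begin{equation*}
\partial_t \psi - \Delta \psi \;\leq\; -\varepsilon\, \psi \quad \text{wherever } \psi > 0,\ x \geq X.
\end{equation*}
In the critical case $c = 2\sqrt{g_+'(0)}$ I instead use $\psi := \tfrac{e^{cx/2}}{1+\sqrt{x}}(\underline{u}-\overline{u})$ and exploit the $C^{1,r}$-regularity of $f$ near $s=0$ exactly as in the proof of Lemma~\ref{lemma:maxprinciple} to obtain a similar inequality of the form $\partial_t \psi - \Delta \psi + (\text{small terms in }x) \leq -\eta\, x^{-3/2}\psi$ (or a comparable negative lower-order term) wherever $\psi>0$ and $x$ is large.

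Next I exploit the two one-sided bounds on $\psi$: it is nonpositive on the lateral boundary $\{x=X'\}$ by hypothesis, and it vanishes uniformly as $x\to+\infty$ (uniformly in $t\in\mathbb{R}$ and $y\in\omega$) because of the assumed exponential decay of $\underline{u}$ and $\overline{u}$, which is precisely what the weight cancels. Proceed by contradiction: suppose $M := \sup\psi > 0$. Combining the two bounds, a maximizing sequence $(t_n,x_n,y_n)$ must satisfy $X' < x_n \leq R$ for some $R$ independent of $n$, while $t_n$ and (if $\omega=\mathbb{R}^{N-1}$) $y_n$ may be unbounded.

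Here comes the step I expect to be the main obstacle: since the supremum may not be attained in the given unbounded space-time cylinder, I need a standard shift-and-compactness procedure. Set $\psi_n(t,x,y) := \psi(t+t_n, x, y+y_n)$ (no $y$-shift when $\omega$ is bounded), so that $\psi_n(0,x_n,0) \to M$. Up to extraction, $x_n \to x_\infty \in (X',R]$ and, by standard parabolic interior estimates, $\psi_n$ converges locally uniformly to a bounded function $\psi_\infty$ that still satisfies the same differential inequality, now with coefficients obtained as $L^\infty_{loc}$-weak-$\ast$ limits of the shifted $f$ (which retain the relevant sign properties on the range of small $s$). The Neumann boundary condition is preserved in the limit on any portion of $\partial\omega$ that persists (when $\omega$ is bounded). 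Moreover $\psi_\infty \leq M$ everywhere and $\psi_\infty(0,x_\infty,y_\infty) = M > 0$. Since $\psi(t,X',y)\leq 0$, we have $x_\infty>X'$, so $(0,x_\infty,y_\infty)$ is an interior or Neumann boundary point at which $\psi_\infty$ attains a strict positive maximum. At such a point $\partial_t \psi_\infty = 0$ and $\Delta\psi_\infty \leq 0$ (by Hopf lemma on the Neumann boundary if necessary, the normal derivative vanishes), yielding $\partial_t\psi_\infty - \Delta\psi_\infty \geq 0$, which contradicts the strict inequality $\partial_t\psi_\infty - \Delta\psi_\infty \leq -\varepsilon M < 0$. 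Hence $\psi \leq 0$ on $\mathbb{R}\times(X',+\infty)\times\omega$, proving the lemma.
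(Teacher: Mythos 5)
Your proof is correct, but it follows a genuinely different route from the paper's. You run a contradiction argument on $M:=\sup\psi>0$: the uniform decay of the weighted difference forces a maximizing sequence to stay in a bounded $x$-strip, you translate in $t$ (and $y$) and pass to a limit $\psi_\infty$ attaining $M$ at a point that is interior in time because the sub- and supersolutions are entire, so that $\partial_t\psi_\infty=0$ and $\Delta\psi_\infty\le 0$ there, contradicting the strictly negative zeroth-order coefficient. The paper instead avoids all compactness: after the same exponential lift it observes that $w=\gamma e^{-\delta t}$, with $0<\delta<\varepsilon/2$, is a supersolution of the linearized problem on $\{x\ge X\}$, compares $\psi$ with $\gamma e^{-\delta(\tau-(t-s))}$ on the time interval $[t-s,t]$ starting from the uniform bound $\psi<\gamma$, and lets $s\to+\infty$ to conclude $\psi\le 0$; globality in time enters there through sliding the starting time to $-\infty$ rather than through $\partial_t\psi_\infty=0$ at an interior-in-time maximum. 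The paper's version is shorter and sidesteps two technicalities your argument inherits: it needs no pointwise second derivatives of a (possibly only $W^{1,2}_p$) limit at the maximum, and no care with the Neumann boundary case $y_\infty\in\partial\omega$ (where "$\Delta\psi_\infty\le 0$" is not automatic and one must reflect or invoke Hopf's lemma, as you indicate only loosely). Conversely, your sliding argument is the same machinery the paper already deploys in Propositions~\ref{otherstates} and~\ref{othercritical}, so it is entirely in the spirit of the rest of the text. One small imprecision: in the critical case $c=2\sqrt{g_+'(0)}$ the dominant negative zeroth-order term produced by the weight $e^{cx/2}/(1+\sqrt{x})$ is of order $x^{-2}$, not $x^{-3/2}$; this is harmless since your contradiction only uses strict negativity of the coefficient at the finite limit point $x_\infty$.
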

Note that the previous elliptic maximum principle lemma can be seen as a particular case. In fact the proof is quite similar and therefore we only briefly sketch it.
\begin{proof}
We start with the case $c>2\sqrt{g'_+(0)}$, and define $\varepsilon>0$ such that $c>2\sqrt{g'_+(0)+\varepsilon}$. We use some exponential lift as above, so that $\psi=e^{\frac{c}{2}x} (\underline{u} - \overline{u})$ satisfies
$$\partial_t \psi-  \Delta \psi + \frac{c^2}{4} \psi - \frac{ f(x,y,\underline{u}) - f (x,y,\overline{u})}{\underline{u} - \overline{u}} \times \psi   \leq 0.$$
Proceeding as above, there exists $X>0$ such that for all $t>0$, $x\geq X$, $y\in\omega$,
$$\frac{ f(x,y,\overline{u}) - f (x,y,\underline{u})}{\overline{u} - \underline{u}}<g'_+(0)+\frac{\varepsilon}{2}.$$
Now choosing $0<\delta<\varepsilon/2$ and $\gamma>0$, the function $w:=\gamma e^{-\delta t}$ is a supersolution of the above linear parabolic problem for all $t\in \mathbb{R}$, $x\geq X$, $y\in\omega$. 
Noting that $e^{\frac{c}{2} x} \times (\underline{u} - \overline{u})$ is uniformly bounded in space and time, one can choose $\gamma$ so that $\psi < \gamma$ for all time, $x\geq X$ and $y\in\omega$, as well as $\psi (\cdot,X',\cdot) \geq 0$ (for some $X'\geq X$ as assumed in Lemma~\ref{lemma:paramaxprinciple}). By a comparison principle, we get that $\psi (t,\cdot) \leq \gamma e^{-\delta s}$ for all $t \in \mathbb{R}$, $s >0$, all $y \in \omega$ and $x \geq X'$. Therefore $\psi \leq 0 $, and in other words, $\overline{u}- \underline{u} \geq 0$.

The case $c = 2 \sqrt{g'_+ (0)}$ is similar. Posing now $\psi = \frac{e^{\frac{c}{2}x}}{1 + \sqrt{x}} (\underline{u} - \overline{u})$, one may find that $\psi$ is a subsolution of the heat equation on a half-space $(X,+\infty) \times \omega$, for all times. It also satisfies $\psi (\cdot, +\infty, \cdot)$ and the conclusion of Lemma~\ref{lemma:paramaxprinciple} follows.
\end{proof}

\bibliography{biblioBG}
\bibliographystyle{plain}

\end{document}